\theoremstyle{plain}
	\newtheorem{lemma}{Lemma}[section]
	\newtheorem*{lemma*}{Lemma}
	\newtheorem{theorem}[lemma]{Theorem}
	\newtheorem*{theorem*}{Theorem}
	\newtheorem{corollary}[lemma]{Corollary}
	\newtheorem*{corollary*}{Corollary}
	\newtheorem{proposition}[lemma]{Proposition}
	\newtheorem*{proposition*}{Proposition}
	\newtheorem{claim}{Claim}
	\newtheorem*{claim*}{Claim}
	\newtheorem{remark}[lemma]{Remark}
	\newtheorem*{remark*}{Remark}
	\newtheorem*{conjecture*}{Conjecture}
	\newtheorem{fact}[lemma]{Fact}
	\newtheorem{definition}[lemma]{Definition}
	\newtheorem{notation}[lemma]{Notation}
\let\oldtocsection=\tocsection
\let\oldtocsubsection=\tocsubsection
\let\oldtocsubsubsection=\tocsubsubsection
\renewcommand{\tocsection}[2]{\hspace{0em}\oldtocsection{\it #1}{#2}}
\renewcommand{\tocsubsection}[2]{\hspace{1em}\oldtocsubsection{\it #1}{\textit #2}}
\renewcommand{\tocsubsubsection}[2]{\hspace{2em}\oldtocsubsubsection{\it #1}{\it#2}}
\def\@tocline#1#2#3#4#5#6#7{\relax
  \ifnum #1>\c@tocdepth 
  \else
    \par \addpenalty\@secpenalty\addvspace{#2}%
    \begingroup \hyphenpenalty\@M
    \@ifempty{#4}{%
      \@tempdima\csname r@tocindent\number#1\endcsname\relax
    }{%
      \@tempdima#4\relax
    }%
    \parindent\z@ \leftskip#3\relax \advance\leftskip\@tempdima\relax
    \rightskip\@pnumwidth plus4em \parfillskip-\@pnumwidth
    #5\leavevmode\hskip-\@tempdima
      \ifcase #1
       \or\or \hskip 1em \or \hskip 2em \else \hskip 3em \fi%
      #6\nobreak\relax
    \dotfill\hbox to\@pnumwidth{\@tocpagenum{#7}}\par
    \nobreak
    \endgroup
  \fi}
\begin{document}

\bigskip

\title{Two-dimensional simply connected abelian locally Nash groups}

\author{E. Baro}
\address{Departamento de \'Algebra, Facultad de Ciencias Matem\'aticas, Universidad Complutense de Ma\-drid, 28040 Madrid (Spain)}
\curraddr{}
\email{eliasbaro@pdi.ucm.es}
\author{J. de Vicente}
\author{M. Otero}
\address{Departamento de Matem\'aticas, Universidad Aut\'onoma de Madrid, 28040 Madrid (Spain)}
\curraddr{}
\email{juan.devicente@uam.es, margarita.otero@uam.es}

\thanks{First and third authors supported by Spanish GAAR MTM2011-22435 and MTM2014-55565-P. Second author supported by a grant of the International Program of Excellence in Mathematics at Universidad Aut\'onoma de Madrid.
The results of this paper are part of the second author's Ph.D. dissertation.}

\subjclass[2010]{Primary 03C64; Secondary 14P10, 14P20, 22E15}

\date{\today}

\begin{abstract}
The aim of this paper is to give a description of simply connected abelian locally Nash groups of dimension $2$. 
Along the way we prove that, for any $n\geq 2$, a locally Nash structure over $(\mathbb{R}^n,+)$ can be characterized via a meromorphic map admitting an algebraic addition theorem.
\end{abstract}
\maketitle

\vspace*{-1cm}
{\footnotesize
\tableofcontents
}
\vspace*{-1cm}

\section{Introduction.}\label{Introduction}

In $1952$ John Nash proved in \cite{Nash} that any compact smooth manifold may be equipped with both an analytic and a 
semialgebraic structure.
After Nash's article, those analytic manifolds that are also equipped with a semialgebraic structure are named Nash manifolds.
These manifolds combine the good properties of analytic manifolds together with the finiteness properties of semialgebraic manifolds,
while remaining complex enough to present interesting problems.
Because of that, Nash manifolds have attracted the attention of many mathematicians, being \cite{Bochnak_Coste_Roy} and \cite{Shiota1} 
the main references and a good introduction to the subject.
Among Nash manifolds, of special interest are the Nash groups, which are analytic groups admitting a semialgebraic structure.
The most relevant result about Nash groups is done by Hrushovski and Pillay in \cite{Hrushovski_Pillay} 
(see also \cite{Hrushovski_Pillay_Errata}), where a close relation between affine Nash groups and real algebraic groups is established.
However, although Nash groups share some of the good properties of algebraic groups, not much else is known 
about them, specially in the non-affine case.
Shiota reviews the main results on Nash manifolds in \cite{Shiota2}, including a description of the one-dimensional 
affine Nash groups given in  \cite{Madden_Stanton} (see also \cite{Madden_Stanton_Errata}).
In order to obtain that description, it is essential to get first a description of locally Nash groups, which are analytic groups admitting a 
``weak'' semialgebraic structure.
The semialgebraic structure is weakened in order to allow the universal coverings of Nash groups -- which are not in general Nash groups -- to be locally Nash groups. Note that Nash groups are a particular case of groups definable in o-minimal structures, see, e.g., \cite{Otero} and its 
references for literature about these groups.

The purpose of this article is to give a description of the locally Nash group structures on $(\mathbb{R}^2,+)$ and also to clarify the 
category of locally Nash groups.
We see this as a first step to obtain a description of the two-dimensional abelian Nash groups.

Next, we introduce the category of locally Nash groups.
Given an open subset $U$ of $\mathbb{R}^n$, a map $f=(f_1,\ldots ,f_m):U\rightarrow \mathbb{R}^m$ is a \emph{Nash map} if 
$f_1,\ldots ,f_m$ are both analytic and semialgebraic.
A \emph{locally Nash manifold} is an analytic manifold $M$ with an atlas $\{(U_i,\phi _i)\}_{i\in I}$ such that for each 
$i,j\in I$, $\phi _i(U_i\cap U_j)$ is semialgebraic and the transition maps are Nash maps.
We call such atlas a \emph{Nash atlas}.
A map $f:M\rightarrow N$ between locally Nash manifolds is a \emph{locally Nash map} if for each point $x$ of $M$ and $f(x)$ of $N$
there exist charts $(U,\phi)$ and $(V,\psi)$ of their respective Nash atlases and an open subset $U'$ of $U$ such that $x\in U'$, 
$f(U')\subset V$ and 
$\psi \circ f\circ \phi ^{-1} : \phi (U') \rightarrow \psi (V)$
is a Nash map.
In a natural way we define locally Nash group and locally Nash group homomorphism/isomorphism between Nash groups (see Section 
\ref{category}).
The study of Nash atlases for $(\mathbb{R}^n,+)$ will lead us in a natural way to the concept of algebraic addition theorem, that we now recall.
Let $\mathbb{K}$ be $\mathbb{C}$ or $\mathbb{R}$.
Let $A_{\mathbb{K},n}$ be the ring of all power series in $n$ variables with coefficients in $\mathbb{K}$ that are convergent in a 
neighborhood of the origin.
Let $M_{\mathbb{K},n}$ be the quotient field of $A_{\mathbb{K},n}$.
Let $u$ and $v$ be variables of $\mathbb{C}^n$.
We say $(\phi _1,\ldots ,\phi _n)\in M_{\mathbb{K},n}^n$ admits an \emph{algebraic addition theorem} (\emph{AAT}) if 
$\phi _1,\ldots ,\phi _n$ are algebraically independent over $\mathbb{K}$ and for each $i$ each $\phi _i (u+v)$ is algebraic over 
$\mathbb{K}(\phi _1(u),\ldots ,\phi _n(u),\phi _1(v),\ldots ,\phi _n(v))$. Note that the AAT is independent of $\mathbb{K}$.
 The $n$ coordinate functions of a Nash coordinate neighborhood of a local Nash group structure on $(\mathbb{R}^n,+)$ admits 
naturally (a functional version of) AAT (see Lemma \ref{AAT-star}).
This will be specially useful to study the different locally Nash structures of $(\mathbb{R}^n,+)$.
We remark that although $(\mathbb{R}^n,+)$ has a unique analytic structure (the standard analytic structure, {\it i.e.} the one 
compatible with the identity map), it may have several different locally Nash structures, and the main aim of this paper is to describe them for $n=2$.

One of our main results is Theorem \ref{T1}, that states that for each locally Nash structure on $(\mathbb{R}^n,+)$ there
exists $f:\mathbb{C}^n\rightarrow \mathbb{C}^n$ such that:

\smallskip
\hspace{-0.4cm}$(1)$ $f$ is a real meromorphic map, {\it i.e.} each of its coordinate functions is the quotient of two analytic functions $h_1$ and $h_2$
satisfying $h_i(\mathbb{R}^n)\subset \mathbb{R}$ ($i=1,2$), and\\
 $(2)$ there exist $k\in \mathbb{R}^n$ and an open neighborhood $U\subset \mathbb{R}^n$ of $0$ such that 
 \[\phi: U\rightarrow \mathbb{R}^n: u\mapsto f(u+k)\]
 is an analytic diffeomorphism, its image is semialgebraic and the Taylor power series expansion of $\phi $ at $0$ admits an AAT.
\smallskip

\noindent
Moreover, the translates of $(U,\phi )$ give a locally Nash group structure isomorphic to the original one.
We denote this locally Nash structure by $(\mathbb{R}^n,+,f)$ (note that this notation is consistent with that of 
\cite{Madden_Stanton}).
More precisely, we shall prove the following:

\noindent {\bf Theorem \ref{T1} }
{\em Every simply connected $n$-dimensional abelian locally Nash group is locally Nash isomorphic to some
$(\mathbb{R}^n,+,f)$ where $f:\mathbb{C}^n\rightarrow \mathbb{C}^n$ is a real meromorphic map admitting an AAT.}

To prove Theorem \ref{T1} we will make use of the following result, which might be of interest by itself.
In fact, this result follows the lines of the classical work of Weierstrass for functions admitting an AAT (see \cite[Ch. XXI]{Hancock} for
dimension $1$ and \cite{Painleve} for a general discussion of the problem on higher dimensions).

\noindent {\bf Theorem \ref{T2}}
{\em Let $\phi _1,\ldots ,\phi _n \in M_{\mathbb{K},n}$ admitting an AAT.
Then there exists $\psi _1,\ldots ,\psi _n\in M_{\mathbb{K},n}$ admitting an AAT and algebraic over 
$\mathbb{K}(\phi _1,\ldots ,\phi _n)$ and $\psi _0\in M_{\mathbb{K},n}$ algebraic over 
$\mathbb{K}(\psi _1,\ldots ,\psi _n)$ such that:\\
$(1)$ for each $f\in \mathbb{K}(\psi _0,\ldots ,\psi _n)$ there exists $R\in \mathbb{K}(X_1,\ldots ,X_{2(n+1)})$ such that
\[
f (u+v)=R \big(\psi _0(u),\ldots ,\psi _n(u),\psi _0(v),\ldots ,\psi _n(v)\big)
\]
and\\
$(2)$ each $\psi _0,\ldots ,\psi _n$ is the quotient of two power series (of $A_{\mathbb{K},n}$),
both convergent in all $\mathbb{C}^n$.
}

In order to describe the locally Nash groups structures on $(\mathbb{R}^2,+)$, Theorem \ref{T1} allow us to use the description 
given by Painlev\'e in \cite{Painleve} of pairs of meromorphic functions on $\mathbb{C}^2$ which admit an AAT.
The description is based on the Weierstrass functions $\wp _\Omega$, $\zeta _\Omega$ and $\sigma _\Omega$ 
corresponding to a lattice $\Omega$ of $(\mathbb{C},+)$ and on the fields of abelian functions $\mathbb{C}(\Lambda )$ corresponding to a 
lattice $\Lambda$ of $(\mathbb{C}^2,+)$
({\it i.e.} $f\in \mathbb{C}(\Lambda )$ if and only if  $f:\mathbb{C}^2\rightarrow \mathbb{C}$ is a meromorphic function such that 
$f(z+\lambda )=f(z)$ for all $\lambda \in \Lambda$).
 Painlev\'e proves in \cite{Painleve} that a pair of meromorphic functions from $\mathbb{C}^2$ to $\mathbb{C}$ which admits 
an AAT is a transcendence basis of a field belonging to one of the families $\mathcal{P}_1$,\ldots ,$\mathcal{P}_6$ (which we will call  \emph{the families of the Painlev\'e description}) given in the statement below.

\noindent {\bf Theorem \ref{T3}}
{\em Every simply connected $n$-dimensional abelian locally Nash group is locally Nash isomorphic to one of the form 
$(\mathbb{R}^2,+,f)$ where $f:\mathbb{C}^2\rightarrow \mathbb{C}^2$ is a real meromorphic map admitting an AAT and such 
that its coordinate functions are algebraic over one of the fields of the following families:
\begin{enumerate}
\item[$(1)$] $\mathcal{P}_1 :=\{ \, \mathbb{C}(g_1\circ \alpha ) : \alpha \in GL_2(\mathbb{C})\, \}$, where $g_1(u,v)=(u,v)$; \vspace{1mm}
\item[$(2)$] $\mathcal{P}_2 :=\{ \, \mathbb{C}(g_2\circ \alpha ) : \alpha \in GL_2(\mathbb{C})\, \}$, where $g_2(u,v)=(u,e^v)$; \vspace{1mm}
\item[$(3)$] $\mathcal{P}_3 :=\{ \, \mathbb{C}(g_3\circ \alpha ) : \alpha \in GL_2(\mathbb{C})\, \}$, where $g_3(u,v)=(e^u,e^v)$; \vspace{1mm}
\item[$(4)$] $\mathcal{P}_4 :=\{ \, \mathbb{C}(g_{4 ,a, \Omega}\circ \alpha ) : \alpha \in GL_2(\mathbb{C}),\, a \in \{0,1\}, 
\,\Omega \text{ is a lattice of } (\mathbb{C},+)\, \}$, where $g_{4,a ,\Omega}(u,v)=(\wp _{\Omega}(u),v-a\zeta _{\Omega} (u))$; \vspace{1mm}
\item[$(5)$] $\mathcal{P}_5 :=\{ \, \mathbb{C}(g_{5,a ,\Omega}\circ \alpha ) : \alpha \in GL_2(\mathbb{C}),\, a \in \mathbb{C}, 
\,\Omega \text{ is a lattice of } (\mathbb{C},+)\, \}$, where $g_{5,a ,\Omega}(u,v)=
\left(\wp _{\Omega}(u),\frac{\sigma _{\Omega} (u-a)}{\sigma _{\Omega} (u)}e^v\right)$; and \vspace{1mm}
\item[$(6)$] $\mathcal{P}_6:=\{  \, \mathbb{C}(\Lambda ) : \Lambda \text{ is a lattice of } (\mathbb{C}^2,+),\,
tr.deg.\, _{\mathbb{C}}\mathbb{C}(\Lambda )=2 \}$.
\end{enumerate}
Furthermore, if $(\mathbb{R}^2,+,g)$ is another locally Nash group, where $g:\mathbb{C}^2\rightarrow \mathbb{C}^2$ is a real meromorphic 
map admitting an AAT, and the coordinate functions of $f$ and $g$ are algebraic over fields of different families, then $(\mathbb{R}^2,+,f)$ and 
$(\mathbb{R}^2,+,g)$ are not locally Nash isomorphic.

Even further, each of the families induce at least one locally Nash group structure on $(\mathbb{R}^2,+)$.
}

The sections of the article are divided as follows:
in Section \ref{category} we define the category of locally Nash groups, in Section \ref{AAT} we prove the basic properties of AAT 
and Theorem \ref{T2}, in Section \ref{meromorphic maps} we extend the results of \ref{AAT} for meromorphic functions and we prove 
Theorem \ref{T1} and finally in Section \ref{section2d} we prove Theorem \ref{T3}.
We also include an appendix where we rewrite the proof of the classification of the one-dimensional simply connected locally Nash groups (\cite[Theorem 1]{Madden_Stanton}) in a uniform way.

\section{Category of Locally Nash Groups.}\label{category}

In this section we collect the definitions and basic properties related to locally Nash manifolds and groups.

\medskip

\subsection{Locally Nash manifolds.} Let $U$ be an open subset of $\mathbb{R}^m$.
We say that $f:U\rightarrow \mathbb{R}^n$ is a \emph{Nash map} if $f$ is both semialgebraic and analytic.
Alternatively, a Nash map can be described as follows.
Given maps $f:W\rightarrow \mathbb{R}^n$ and $g:W\rightarrow \mathbb{R}^n$ we say that $g$ is \emph{algebraic} over $\mathbb{R}(f)$ 
on $W$ if for each $i\in \{1,\ldots ,n\}$ there exists a polynomial 
$P_i\in \mathbb{R}[X_1,\ldots ,X_n,Y]$ of positive degree in $Y$ such that 
\[
P_i(f_1(x),\ldots ,f_n(x), g_i(x))\equiv 0 \text{ on } W. 
\]
Let $U$ be an open subset of $\mathbb{R}^m$.
Then, $f:U\rightarrow \mathbb{R}^n$ is a Nash map if and only if $U$ is semialgebraic, $f$ is analytic and $f(x)$ is 
algebraic over $\mathbb{R}(x)$ on $U$ (see \cite[Proposition 8.1.8]{Bochnak_Coste_Roy}).
In all what follows we will make use of this characterization without further mention.
We say that $f:U\rightarrow V\subset \mathbb{R}^n$ is a \emph{Nash diffeomorphism} if $f$ is an analytic 
diffeomorphism and both $f$ and $f^{-1}$ are Nash maps.
Let $M$ be an analytic manifold. 
Two charts $(U,\phi )$ and $(V,\psi )$ of an atlas for $M$ are \emph{Nash compatible} if $\phi (U)$ and $\psi (V)$ are
semialgebraic and either $U\cap V= \emptyset$ or 
\[
\psi \circ \phi ^{-1}:\phi (U\cap V)\rightarrow \psi (U\cap V) 
\]
is a Nash diffeomorphism.
An atlas of $M$ is a \emph{Nash atlas} if any two charts in the atlas are Nash compatible.
In particular, $\phi (U)$ is semialgebraic for any $(U,\phi )$ in the Nash atlas.
An analytic manifold $M$ together with a Nash atlas is called a \emph{locally Nash manifold}.

\begin{definition}\label{locally Nash map}
\emph{
Let $M_1$ and $M_2$ be locally Nash manifolds equipped with Nash atlases $\{(U_i,\phi _i)\}_{i\in I}$ and 
$\{(V_j,\psi _j)\}_{j\in J}$ respectively.
A \emph{locally Nash map} $f:M_1\rightarrow M_2$ is a (continuous) map such that for every $p\in M_1$ and every 
$j\in J$ such that $f(p)\in V_j$ there exists $i\in I$ and an open subset $U\subset U_i$ such that 
$p\in U$, $f(U)\subset V_j$ and
\[
\psi _j \circ f\circ \phi _i ^{-1} : \phi _i  (U) \rightarrow \psi _j (V_j)
\]
is a Nash map.
(For an equivalent definition see Proposition \ref{characterization of locally Nash maps}.)}
\end{definition}

\noindent A locally Nash map $f:M_1\rightarrow M_2$ is a \emph{locally Nash diffeomorphism} if $f$ is an analytic (global) 
diffeomorphism and both $f$ and $f^{-1}$ are locally Nash maps.
A \emph{locally Nash group} is a locally Nash manifold equipped with group operations (multiplication and inversion) which 
are given by locally Nash maps.
A \emph{homomorphism of locally Nash groups} is a locally Nash map that is also a homomorphism of groups. 
An \emph{isomorphism of locally Nash groups} is a locally Nash diffeomorphism that is also an isomorphism of groups.
Clearly a map $f$ is an isomorphism of locally Nash maps if and only if both $f$ is an isomorphism 
of abstract groups and $f$ and $f^{-1}$ are locally Nash maps.

\medskip

Locally Nash maps can be characterized as follows.

\begin{proposition}\label{characterization of locally Nash maps} Let $M_1$ and $M_2$ be locally Nash manifolds with Nash atlases 
$\{(U_i,\phi _i)\}_{i\in I}$ and $\{(V_j,\psi _j)\}_{j\in J}$ respectively.
The following are equivalent:
\begin{enumerate}
 \item[$(1)$] $f:M_1\rightarrow M_2$ is a locally Nash map.
 \item[$(2)$] For every $p\in M_1$ and for each $i\in I$ and $j\in J$ such that $p\in U_i$ and $f(p)\in V_j$ 
 there exists an open subset $U$ of $U_i$ such that $p\in U$, $f(U)\subset V_j$, and 
 \[ 
 \psi _j\circ f \circ \phi _i^{-1} :\phi _i(U)\rightarrow \psi _j(V_j) 
 \]
 is a Nash map.
 \item[$(3)$] For every $p\in M_1$ there exist $i \in I$ and $j\in J$ such that $p\in U_i$ and $f(p)\in V_j$ and 
 there exists an open subset $U$ of $U_i$ such that $p\in U$, $f(U)\subset V_j$, and 
 \[
 \psi_j \circ f \circ \phi_i^{-1}:\phi_i(U )\rightarrow \psi_j (V_j ) 
 \] 
 is a Nash map.
\end{enumerate}
\end{proposition}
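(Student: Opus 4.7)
The plan is to close the cycle $(2)\Rightarrow (1)\Rightarrow (3)\Rightarrow (2)$, since the first two implications are essentially trivial and the real content lies in deducing the ``for all charts'' statement (2) from the ``some chart'' statement (3). The crucial ingredient is that, in a Nash atlas, the transition maps are Nash diffeomorphisms, together with the elementary fact that a composition of Nash maps between open semialgebraic sets is again Nash.

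For $(2)\Rightarrow (1)$, given $p\in M_1$ and $j\in J$ with $f(p)\in V_j$, I pick any $i\in I$ with $p\in U_i$ (possible because the $U_i$'s cover $M_1$) and apply (2). For $(1)\Rightarrow (3)$, given $p\in M_1$, I pick any $j\in J$ with $f(p)\in V_j$ and apply (1). Both implications are immediate from the coverage of the charts; no Nash argument is needed.

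The core of the proof is $(3)\Rightarrow (2)$. Fix $p\in M_1$ and indices $i,j$ with $p\in U_i$ and $f(p)\in V_j$. By (3) there exist indices $i',j'$ with $p\in U_{i'}$, $f(p)\in V_{j'}$ and an open $U'\subset U_{i'}$ with $p\in U'$, $f(U')\subset V_{j'}$ such that $\psi_{j'}\circ f\circ \phi_{i'}^{-1}$ is Nash on $\phi_{i'}(U')$. In particular this local expression is continuous, so $f$ is continuous at $p$; hence $W:=U'\cap U_i\cap f^{-1}(V_j\cap V_{j'})$ is an open neighborhood of $p$ contained in $U_i$ with $f(W)\subset V_j$. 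On $\phi_i(W)$ I rewrite
\[
\psi_j\circ f\circ \phi_i^{-1}=\bigl(\psi_j\circ \psi_{j'}^{-1}\bigr)\circ \bigl(\psi_{j'}\circ f\circ \phi_{i'}^{-1}\bigr)\circ \bigl(\phi_{i'}\circ \phi_i^{-1}\bigr).
\]
The outer factors are Nash diffeomorphisms on the relevant open semialgebraic sets because the atlases are Nash, and the middle factor is Nash by hypothesis. Since a composition of Nash maps is Nash (both analyticity and semialgebraicity are preserved under composition, the latter by Tarski--Seidenberg), the whole expression is Nash on $\phi_i(W)$, giving (2) with $U=W$.

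The main (and only) obstacle is the bookkeeping of domains: one must shrink $U'$ to $W$ so that the three compositions are simultaneously defined, which needs continuity of $f$ at $p$; this is extracted from the local Nash expression given by (3) itself, so no additional hypothesis is required. Once the domains are sorted out, the implication is a formal consequence of the Nash compatibility of charts.
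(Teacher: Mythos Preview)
Your argument follows the same route as the paper's proof: the implications $(2)\Rightarrow(1)\Rightarrow(3)$ are trivial, and $(3)\Rightarrow(2)$ is obtained by sandwiching the given Nash coordinate expression between two transition maps of the Nash atlases. The factorization and the use of continuity to shrink the neighborhood are exactly as in the paper.

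There is one technical point you gloss over that the paper handles explicitly. In this setting a Nash map is, by definition, a semialgebraic analytic map on an \emph{open semialgebraic} domain; in particular the restriction of a Nash map to an arbitrary open subset need not be Nash. Your set $W=U'\cap U_i\cap f^{-1}(V_j\cap V_{j'})$ has no reason to have semialgebraic image $\phi_i(W)$ (or $\phi_{i'}(W)$), so you cannot immediately assert that the three factors are Nash on the relevant domains. The fix is trivial---replace $W$ by the $\phi_i$-preimage of a small open ball around $\phi_i(p)$ contained in $\phi_i(W)$---but it should be stated, since otherwise the phrase ``the whole expression is Nash on $\phi_i(W)$'' is not literally justified.
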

\begin{proof}
Since $(2)$ implies $(1)$ and $(1)$ implies $(3)$, it is enough to show that $(3)$ implies $(2)$.
Fix $p\in M_1$ and let $i \in I$, $j\in J$ and $U\subset U_i$ whose existence ensures $(3)$.
Fix $k\in I$ and $\ell \in J$ with $p\in U_k$ and $f(p)\in V_\ell$.
Clearly, it suffices to show that there exists an open subset $U'$ of $U_k$ with $p\in U'$ such that 
\[
\psi_\ell \circ f \circ \phi_k^{-1}:\phi_k(U')\rightarrow \psi_\ell (V_\ell) 
\]
is Nash.
To prove the latter, firstly note that $\psi_j \circ f \circ \phi_i^{-1}$ is continuous and both $U_i\cap U_k\ni p$ and 
$V_j\cap V_\ell\ni f(p)$ are open, hence there exists an open subset $U'$ of $U_i\cap U_k$ with $p\in U'$ such that
\[
(\psi_j \circ f \circ \phi_i^{-1})(\phi_i(U'))\subset \psi_j(V_j\cap V_\ell). 
\]
Moreover, we can assume that $\phi_i(U')$ is semialgebraic (it suffices to take, instead of $U'$, the preimage of an open ball 
centered in $\phi_i(p)$ and contained in the original $\phi_i(U')$).
In particular, since the restriction of a Nash map to an open semialgebraic set is a Nash map, the map 
\[
\psi_j \circ f \circ \phi_i^{-1}:\phi_i(U')\rightarrow \psi_j (V_j\cap V_\ell) 
\]
is still a Nash map.
On the other hand, both change of charts 
\[
\phi_i \circ \phi_k^{-1}:\phi_k(U') \rightarrow \phi_i(U')
\]
and
\[
\psi_\ell \circ \psi_j^{-1}:\psi_j(V_j\cap V_\ell)\rightarrow \psi_\ell(V_j\cap V_\ell) 
\]
are Nash maps.
Thus, the composition of the last three maps,
\[
\psi_\ell \circ f \circ \phi_k^{-1}=(\psi_\ell \circ \psi_j^{-1})\circ (\psi_j \circ f \circ \phi_i^{-1})\circ (\phi_i \circ \phi_k^{-1})
: \phi _k(U')\rightarrow \psi _\ell(V_\ell)
\]
is a Nash map, as required.
\end{proof}

From Proposition \ref{characterization of locally Nash maps}.$(2)$ it is clear that the composition of locally Nash maps is a 
locally Nash map.
We also deduce the following.

\begin{lemma}\label{locally Nash diffeomorphism} Let $M_1$ and $M_2$ be locally Nash manifolds.
Then $f:M_1\rightarrow M_2$ is a locally Nash diffeomorphism if and only if $f$ is both an analytic diffeomorphism and a locally 
Nash map.
\end{lemma}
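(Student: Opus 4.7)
The plan is to prove only the non-trivial direction ``$\Leftarrow$'', since the forward direction is immediate from the definition of a locally Nash diffeomorphism. So suppose $f:M_1\to M_2$ is both an analytic diffeomorphism and a locally Nash map; I would verify that $f^{-1}:M_2\to M_1$ is a locally Nash map by checking condition $(3)$ of Proposition \ref{characterization of locally Nash maps}.

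Fix $p\in M_2$ and set $q:=f^{-1}(p)$. Applying the definition of locally Nash map to $f$ at $q$ yields charts $(U_i,\phi_i)$ with $q\in U_i$ and $(V_j,\psi_j)$ with $p\in V_j$, and an open $U\subseteq U_i$ containing $q$ with $f(U)\subseteq V_j$, such that $g:=\psi_j\circ f\circ \phi_i^{-1}:\phi_i(U)\to\psi_j(V_j)$ is Nash. After shrinking $U$ (e.g.\ replacing it by the $\phi_i$-preimage of a small open ball around $\phi_i(q)$), I may assume $\phi_i(U)$ is semialgebraic. Since $f$ is a homeomorphism, $V:=f(U)$ is an open subset of $V_j$ containing $p$, and the restriction $g:\phi_i(U)\to\psi_j(V)$ is an analytic diffeomorphism.

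The core step is the standard fact that the inverse of an analytic Nash diffeomorphism between open semialgebraic subsets of $\mathbb{R}^n$ is again Nash. Concretely: $\psi_j(V)=g(\phi_i(U))$ is semialgebraic by Tarski--Seidenberg; the graph of $g^{-1}$ is the graph of $g$ with its two coordinate factors swapped, hence semialgebraic, so $g^{-1}$ is semialgebraic; and $g^{-1}$ is analytic because $g$ is an analytic diffeomorphism. Therefore
\[
g^{-1} \;=\; \phi_i\circ f^{-1}\circ \psi_j^{-1}:\psi_j(V)\longrightarrow \phi_i(U)\subseteq \phi_i(U_i)
\]
is a Nash map, which is precisely condition $(3)$ of Proposition \ref{characterization of locally Nash maps} for $f^{-1}$ at $p$. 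Since $p\in M_2$ was arbitrary, $f^{-1}$ is locally Nash, and hence $f$ is a locally Nash diffeomorphism. The only delicate ingredient is the inversion lemma for Nash maps invoked above; the remainder is bookkeeping with the definitions and Proposition \ref{characterization of locally Nash maps}.
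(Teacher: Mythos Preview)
Your proof is correct and follows essentially the same approach as the paper's: both reduce to showing that the local coordinate expression $\psi_j\circ f\circ\phi_i^{-1}$ has a Nash inverse, using that the inverse of a bijective semialgebraic map is semialgebraic together with the analyticity coming from $f$ being an analytic diffeomorphism. The only cosmetic difference is that you verify condition $(3)$ of Proposition~\ref{characterization of locally Nash maps} for $f^{-1}$, while the paper verifies Definition~\ref{locally Nash map} directly (fixing an arbitrary chart on the target side first); since Proposition~\ref{characterization of locally Nash maps} gives the equivalence, both routes are valid.
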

\begin{proof}
We show the nontrivial implication.
Let $\{(U_i,\phi _i)\}_{i\in I}$ and $\{(V_j,\psi _j)\}_{j\in J}$ be the Nash atlases of $M_1$ and $M_2$ respectively.
We have to show that $f^{-1}:M_2\rightarrow M_1$ is a locally Nash map.
Fix $p\in M_2$ and $i\in I$ such that $f^{-1}(p)\in U_i$.
We have to show that there exists $j\in J$ and an open subset $V\subset V_j$ such that $p\in V$, $f^{-1}(V)\subset U_i$, 
$\psi _j(V)$ is semialgebraic and 
\[
\phi _i\circ f^{-1}\circ \psi _j^{-1}:\psi _j(V)\rightarrow \phi _i(U_i) 
\]
is a Nash map.
Let $j\in J$ be such that $p\in V_j$.
For these $f^{-1}(p)\in M_2$, $i$ and $j$, since $f$ is a locally Nash map, we can apply Proposition 
\ref{characterization of locally Nash maps}.$(2)$ and get an open subset $U$ of $U_i$ such that $f^{-1}(p)\in U$,
$f(U)\subset V_j$ and 
\[
\psi _j\circ f\circ \phi _i ^{-1}: \phi _i(U)\rightarrow \psi _j(V_j) 
\]
is a Nash map.
Therefore, the given $j$ and $V:=f(U)$ satisfy the required conditions once we note that the inverse of a bijective 
semialgebraic map is a semialgebraic map.
\end{proof}

\subsection{Locally Nash groups} Next, we show that to describe the locally Nash structure of a locally Nash group it is enough to do it near the identity.
We introduce new notations that will be useful for this purpose.
Let $(G,\cdot)$ equipped with an analytic atlas $\mathcal{A}$ be an analytic group -- thus a Lie group -- and let $(U,\phi)$ be a 
chart of the identity of $\mathcal{A}$.
From the theory of analytic groups we recall that
\[
\mathcal{A}_{(U,\phi)}:= \{ (gU, \phi _g)\ | \ \phi _g:gU\rightarrow \mathbb{R}^n:u\mapsto \phi (g^{-1}u)\}_{g\in G}
\]
is also an analytic atlas for $(G,\cdot)$.
We will keep the notation $\mathcal{A}_{(U,\phi)}$ for this canonical atlas.
In the above example, $\mathcal{A}_{(U,\phi)}$ might not be a Nash atlas for $(G,\cdot)$, but if it is so then, the locally 
Nash group $(G,\cdot)$ equipped with $\mathcal{A}_{(U,\phi)}$ will be denoted $(G,\cdot,\phi |_U)$, 
see Fact \ref{compatibility0} and Proposition \ref{compatibility1}.
(The notation $(\mathbb{R}^n,+,f)$, where $f:\mathbb{C}^n\rightarrow \mathbb{C}^n$ is as mentioned in the introduction, 
will be justified in Section \ref{meromorphic maps} once Lemma \ref{AAT-star} is proved.)

\begin{fact}[{\cite[Lemma 1]{Madden_Stanton}}]\label{compatibility0}
Let $(G,\cdot)$ be an analytic group with atlas $\mathcal{A}$.
Let $(U,\phi )\in \mathcal{A}$ be a chart of the identity such that:
\begin{enumerate}
\item[$(i)$] there exists an open neighborhood of the identity $U'\subset U$ such that
\[
\phi \circ \cdot \circ (\phi ^{-1},\phi ^{-1}):\phi (U')\times \phi (U')\rightarrow \phi (U) :
(x,y)\mapsto \phi (\phi ^{-1}(x)\cdot \phi ^{-1}(y))
\]
is a Nash map, and
\item[$(ii)$] for each $g\in G$ there exists an open neighborhood of the identity $U_g\subset U$ such that
\[
\phi \circ \ ^{-1} \circ \phi ^{-1}:\phi (U_g)\rightarrow \phi (U) :
x\mapsto \phi (g^{-1}\phi ^{-1}(x)g)
\]
is a Nash map.
\end{enumerate}
Then there exists $V\subset U$ such that $\mathcal{A}_{(V,\phi )}=\{ (gV,\phi _g)\} _{g\in G}$ is a Nash atlas for $(G,\cdot)$ and 
hence $(G,\cdot ,\phi |_V)$is a locally Nash group.
\end{fact}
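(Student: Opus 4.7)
The plan is to first choose a sufficiently small open neighborhood $V$ of the identity inside $U$, then verify separately that (a) the translated atlas $\mathcal{A}_{(V,\phi)}$ is a Nash atlas, and (b) multiplication and inversion are locally Nash in this atlas. For the choice of $V$, I would take $V=\phi^{-1}(B)$ for a small open ball $B$ centered at $\phi(e)=0$, and use continuity of multiplication and inversion in $G$ to shrink $B$ so that $V\subset U'$ and $VV^{-1}\subset U'$. This guarantees that $\phi(V)$ is semialgebraic and sets us up for applying (i).

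To verify $\mathcal{A}_{(V,\phi)}$ is a Nash atlas, take $g,h\in G$ with $gV\cap hV\neq\emptyset$; then $t:=h^{-1}g$ lies in $VV^{-1}\subset U'$. The transition map reads
\[
\phi_h\circ\phi_g^{-1}(x)=\phi\bigl(t\,\phi^{-1}(x)\bigr),\qquad x\in\phi(V\cap t^{-1}V),
\]
and since both $t$ and $\phi^{-1}(x)\in V$ lie in $U'$, condition (i) (with the first coordinate fixed at $\phi(t)$) shows this is a Nash map of $x$. The symmetric argument with $t^{-1}\in VV^{-1}$ handles the inverse transition. Semialgebraicity of the domain $\phi(V\cap t^{-1}V)=\phi(V)\cap\phi(t^{-1}V)$ follows from $\phi(V)$ being semialgebraic and $\phi(t^{-1}V)$ being the image of $\phi(V)$ under the Nash map $x\mapsto\phi(t^{-1}\phi^{-1}(x))$.

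For the locally Nash group structure I would decompose
\[
(ga)(hb)=(gh)\bigl((h^{-1}ah)b\bigr),\qquad (ga)^{-1}=g^{-1}\bigl(g\,a^{-1}g^{-1}\bigr),
\]
so that in coordinates multiplication near $(g,h)$ amounts to first applying the conjugation $a\mapsto h^{-1}ah$ and then the multiplication map, and inversion near $g$ amounts to first inverting near the identity and then conjugating by $g^{-1}$. Condition (ii) (with $g$ replaced by $h$, resp.\ by $g^{-1}$) handles the conjugations, condition (i) handles the multiplication, and the Nash implicit function theorem applied to the equation $\phi(\phi^{-1}(x)\phi^{-1}(y))=\phi(e)$ (whose derivative in $y$ at the origin is the identity) gives that inversion near the identity is Nash in coordinates. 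Using Proposition \ref{characterization of locally Nash maps}, I only need the coordinate expressions to be Nash on some smaller open neighborhood of the relevant point, so the domain can be shrunk freely to ensure each intermediate element lies in the $U'$ or $U_g$ where the relevant hypothesis applies.

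The main obstacle is the bookkeeping: for each pair $(g,h)$ the successive shrinkings of the chart domain must align so that all intermediate elements ($t$, $h^{-1}ah$, $a^{-1}$, $ga^{-1}g^{-1}$) land in the correct neighborhoods ($U'$ for (i), $U_g$ for (ii)) while the coordinate images remain semialgebraic. The flexibility of the locally Nash definition — allowing different subsets for different points — is what makes this manageable, despite the fact that a single $V$ cannot in general satisfy $V\subset U_g$ for every $g\in G$ simultaneously.
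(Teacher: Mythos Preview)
Your proposal is correct and follows essentially the same route as the paper: choose a small symmetric $V$ with semialgebraic image, verify the transition maps are Nash via condition (i) with one argument frozen, and handle multiplication and inversion by the same conjugation decompositions. The only noteworthy difference is how you show inversion near the identity is Nash: you invoke the Nash implicit function theorem, whereas the paper observes directly that the graph $\{(x,y):\phi(\phi^{-1}(x)\cdot\phi^{-1}(y))=\phi(e)\}$ is semialgebraic (being a level set of a Nash map) and hence the coordinate inversion---already analytic---is Nash. Both arguments are valid; the paper's is slightly more elementary.
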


We note that when $(G,\cdot)$ is an abelian group then $(ii)$ of Fact \ref{compatibility0} is trivially 
satisfied.
So, in this case, the proposition says that each chart of the identity satisfying $(i)$ induces a locally Nash group 
structure on $(G,\cdot)$.
We anticipate from Lemma \ref{AAT-star} that a chart of the identity $(U,(\phi _1,\ldots ,\phi _n))$ 
of $(\mathbb{R}^n,+)$ with its standard analytic structure satisfies $(i)$ if and only if it admits an algebraic 
addition theorem, {\it i.e.} if for some open neighborhood of the identity $U'\subset U$ and for each $i\in \{1,\ldots ,n\}$ 
there exists a $P_i\in \mathbb{K}[X_1,\ldots ,X_{2n+1}]$, $P_i\neq 0$, such that
\[
P_i(\phi _1(u),\ldots ,\phi _n(u),\phi _1(v),\ldots ,\phi _n(v),\phi _i(u+v))\equiv 0 \text{ on } U'\times U'.
\]

\begin{proof}[Proof of Fact \ref{compatibility0}]
Firstly, given $(U,\phi )\in \mathcal{A}$, a chart of the identity satisfying $(i)$ and $(ii)$, we will find $V\subset U$ such 
that $G$ equipped with $\mathcal{A}_{(V,\phi)}:=\{ (gV, \phi _g)\}_{g\in G}$ where 
\[
\phi _g:gV\rightarrow \mathbb{R}^n: u\mapsto \phi _g(u)=\phi (g^{-1}u) 
\]
is a locally Nash manifold (for this only $(i)$ is needed).
Then, we will check that $\cdot :G\times G\rightarrow G$ is a locally Nash map when $G$ is equipped with $\mathcal{A}_{(V,\phi )}$.
Finally, we will show that $^{-1}:G\rightarrow G$ is a locally Nash map when $G$ is equipped with $\mathcal{A}_{(V,\phi )}$.
This will complete the proof.

\smallskip

Since the map of $(i)$ is continuous, there exists an open 
neighborhood of the identity $V\subset U'$ such that $V\cdot V\subset U'$ and $V=V^{-1}$.
Moreover, we can assume that $\phi (V)$ is semialgebraic (it suffices to take the preimage of an open ball centered in $\phi$ of
the identity and contained in the original $\phi (U)$).
We show that $\mathcal{A}_{(V,\phi )}$, as defined above, is a Nash atlas for $G$.
We note that for each $g\in G$
\[
(\phi _g)^{-1}: \phi (V)\rightarrow gV : x\mapsto g\phi ^{-1}(x). 
\]
So we have to check that if $g,h\in G$ are given such that $gV\cap hV\neq \emptyset$ then
\[
\phi _h\circ (\phi _g)^{-1}:\phi (V\cap g^{-1}hV)\rightarrow \phi (V\cap h^{-1}gV): x\mapsto \phi (h^{-1}g\phi ^{-1}(x))
\]
is a Nash diffeomorphism.
Since $V\cdot V\subset U'$ and $V=V^{-1}$, we have that $h^{-1}g\in U'$.
Semialgebraic sets are closed under projections, thus we can evaluate the map of $(i)$ at $(\phi (h^{-1}g),x)$ to deduce that 
\[
\phi _h\circ (\phi _g)^{-1}:\phi (U')\rightarrow \phi (U):x\mapsto \phi (h^{-1}g\phi ^{-1}(x))
\]
is a Nash map.
Since $\phi (h^{-1}gV)$ is the image of $\phi (V)$ by $\phi _h\circ (\phi _g)^{-1}$ and $\phi (V)$ is semialgebraic, 
$\phi (h^{-1}gV)$ is also semialgebraic.
We note that $\phi (V\cap h^{-1}gV)$ is equal to $\phi (V)\cap \phi (h^{-1}gV)$ and hence semialgebraic.
So the map
\[
\phi _h\circ (\phi _g)^{-1}:\phi (V\cap g^{-1}hV)\rightarrow \phi (V\cap h^{-1}gV): x\mapsto \phi (h^{-1}g\phi ^{-1}(x))
\]
is a Nash map.
By symmetry, the same argument shows that $\phi _g\circ (\phi _h)^{-1}$ is also a Nash map.
We recall from the theory of analytic groups that $\mathcal{A}_{(V,\phi )}$ is an atlas for $G$.
This implies that $\phi _h\circ (\phi _g)^{-1}$ is an analytic diffeomorphism and hence a Nash diffeomorphism.
Therefore $G$ equipped with $\mathcal{A}_{(V,\phi)}$ is a locally Nash manifold.

\smallskip

Now we check that $\cdot :G\times G\rightarrow G$ is a locally Nash map when $G$ is equipped with $\mathcal{A}_{(V,\phi )}$.
By Proposition \ref{characterization of locally Nash maps}.$(3)$ it is enough to check that for each $g,h\in G$ there exist open 
neighborhoods of the identity $V_1,V_2\subset V$ such that
\[
\phi_{gh} \circ \cdot \circ ((\phi _g) ^{-1}\!\!	,(\phi _h) ^{-1}):\phi (V_1)\times \phi (V_2)\rightarrow \phi (V) :
(x,y)\mapsto \phi (h^{-1}\phi ^{-1}(x) h\phi ^{-1}(y))
\]
is a Nash map.
Reasoning as in the first part of the proof and since the maps of $(i)$ and $(ii)$ for $h$ are Nash,
there exist open neighborhoods of the identity $V_1',V_2\subset V$ and $V_1\subset V_1'\cap U_h$ such that both
\[
\phi \circ \cdot \circ (\phi ^{-1},\phi ^{-1}):\phi (V_1')\times \phi (V_2)\rightarrow \phi (V) :
(x,y)\mapsto \phi (\phi ^{-1}(x)\cdot \phi ^{-1}(y))
\]
and
\[
\phi \circ \ ^{-1} \circ \phi ^{-1}:\phi (V_1)\rightarrow \phi (V_1') :
x\mapsto \phi (h^{-1}\phi ^{-1}(x)h)
\]
are Nash maps.
An adequate composition - which is also Nash - of the latter maps gives the map which was required to be Nash.

\smallskip

Next we show that the map
\[
\phi \circ ^{-1} \circ \,
\phi ^{-1}:\phi (V)\rightarrow \phi (V) :
x\mapsto \phi ((\phi ^{-1}(x))^{-1})\tag{$\ast$}\label{star3}
\]
is Nash.
Since the map of (\ref{star3}) is analytic (because $\mathcal{A}$ is an analytic atlas for $(G,\cdot)$), it is enough to check that
it is semialgebraic.
Without loss of generality we may assume that $\phi$ of the identity is $0$.
We note that since the map of $(i)$ is Nash
\[
A:=\{(x,y)\in \phi (V)\times \phi (V): \phi (\phi ^{-1}(x)\cdot \phi ^{-1}(y)) =0\} 
\]
is semialgebraic.
Since each $g\in G$ has a unique inverse element and $V=V^{-1}$, it follows that
\[
A=\{(x,y)\in \phi (V)\times \phi (V): y=\phi ((\phi ^{-1}(x))^{-1}) \}.
\]
Now since the projection of a semialgebraic set is a semialgebraic set, it follows that the graph of the map (\ref{star3}) is 
semialgebraic and hence the map of (\ref{star3}) is semialgebraic, as it was required.

\smallskip

Now we check that $^{-1}:G\rightarrow G$ is a locally Nash map when $G$ is equipped with $\mathcal{A}_{(V,\phi )}$.
By Proposition \ref{characterization of locally Nash maps}.$(3)$ it is enough to prove that for each $g\in G$ there exists an open 
neighborhood of the identity $V_1\subset V$ such that
\[
\phi_{g^{-1}} \circ \ ^{-1} \circ (\phi _g)^{-1}:\phi (V_1)\rightarrow \phi (V) :
x\mapsto \phi (g(\phi ^{-1}(x))^{-1}g^{-1})
\]
is a Nash map.
Reasoning again as in the first part of the proof and since the map of property $(ii)$ for $g^{-1}$ is Nash,
there exists an open neighborhood of the identity $V_1\subset V$ such that
\[
\phi \circ \ ^{-1} \circ \phi ^{-1}:\phi (V_1)\rightarrow \phi (V) :
x\mapsto \phi (g\phi ^{-1}(x)g^{-1})
\]
is a Nash map.
Composing the latter map with the map in (\ref{star3}) we obtain a Nash map, which is the map required to be Nash.
This completes the proof.
\end{proof}

\begin{proposition}\label{compatibility1}
Let $(G,\cdot)$ be a locally Nash group equipped with a Nash atlas $\mathcal{A}$.
Then, for every chart of the identity $(U,\phi )\in \mathcal{A}$, there exists an open subset $V$ of $U$ such that $(G,\cdot)$ 
equipped with $\mathcal{A}$ is isomorphic as a locally Nash group to $(G,\cdot , \phi |_V)$.
\end{proposition}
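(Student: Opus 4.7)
My strategy is to produce, from the given chart $(U,\phi)$, an open neighborhood $V$ of the identity on which Fact \ref{compatibility0} applies, and then to check that the identity map on $G$ is an isomorphism between $(G,\cdot)$ equipped with $\mathcal{A}$ and with $\mathcal{A}_{(V,\phi)}$. Since the identity is trivially an analytic diffeomorphism and a group isomorphism, Lemma \ref{locally Nash diffeomorphism} reduces the whole problem to showing that it is a locally Nash map in one direction.

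To apply Fact \ref{compatibility0}, I need to verify its conditions $(i)$ and $(ii)$ for $(U,\phi)$. For $(i)$, I use that the multiplication $m\colon G\times G\to G$ is locally Nash by hypothesis and that $(U\times U,(\phi,\phi))$ is a chart of $(e,e)$ in the product atlas $\mathcal{A}\times\mathcal{A}$; Proposition \ref{characterization of locally Nash maps} then supplies an open $U'\subset U$ with $U'\times U'$ lying in a domain on which multiplication reads through $\phi$ as a Nash map. For $(ii)$, conjugation by any fixed $g$ is the composition of the locally Nash multiplication with obviously locally Nash constant and identity maps, hence is itself locally Nash, and the same argument yields a neighborhood $U_g$ as required.

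The central step is then showing that $\mathrm{id}\colon(G,\cdot,\mathcal{A})\to(G,\cdot,\phi|_V)$ is locally Nash. By Proposition \ref{characterization of locally Nash maps}.$(3)$, I fix $p\in G$, pick any chart $(W,\psi)\in\mathcal{A}$ with $p\in W$, and take the chart $(pV,\phi_p)\in\mathcal{A}_{(V,\phi)}$ (which contains $p$ because $e\in V$). The transition to verify is $\phi_p\circ\psi^{-1}\colon y\mapsto\phi(p^{-1}\psi^{-1}(y))$, which factors as $\phi\circ L_{p^{-1}}\circ\psi^{-1}$. The left translation $L_{p^{-1}}$ is the restriction of $m$ to $\{p^{-1}\}\times G$ and is therefore a locally Nash map with respect to $\mathcal{A}$; since $L_{p^{-1}}(p)=e\in U$, a further application of Proposition \ref{characterization of locally Nash maps} to $L_{p^{-1}}$ with source chart $(W,\psi)$ and target chart $(U,\phi)$ produces the required Nash expression on an open neighborhood of $p$ inside $W\cap pV$.

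The main issue I anticipate is bookkeeping rather than genuine technical difficulty: the various neighborhoods obtained (for $(i)$, for $(ii)$, and for the factorization through $L_{p^{-1}}$) must be shrunk consistently so that all charts involved have semialgebraic images, but this is handled routinely by the preimage-of-small-ball trick already used in the proof of Proposition \ref{characterization of locally Nash maps}. Once the identity map is shown to be locally Nash, Lemma \ref{locally Nash diffeomorphism} immediately upgrades it to a locally Nash diffeomorphism, and because it is a group isomorphism the conclusion follows.
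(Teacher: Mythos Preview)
Your proposal is correct and follows essentially the same route as the paper: verify hypotheses $(i)$ and $(ii)$ of Fact~\ref{compatibility0} using that multiplication is locally Nash, invoke that fact to obtain $V$, and then use Lemma~\ref{locally Nash diffeomorphism} to reduce the isomorphism to checking that the identity is a locally Nash map in one direction. The only cosmetic differences are that the paper handles $(ii)$ by an explicit two-step evaluation of the multiplication map at $\psi_2(g^{-1})$ and $\psi_1(g)$ rather than by naming conjugation as a composition of locally Nash maps, and that for the identity map it works directly with the definition (verifying $\phi_h\circ\psi_1^{-1}$ is Nash for all relevant $g,h$) rather than invoking Proposition~\ref{characterization of locally Nash maps}.$(3)$ and the named left translation $L_{p^{-1}}$; in both places your more conceptual phrasing unwinds to exactly the paper's computation, since ``restricting $m$ to $\{p^{-1}\}\times G$'' is precisely the paper's ``evaluate the Nash map at $x=\psi_2(h^{-1})$, using that projections of semialgebraic sets are semialgebraic.''
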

\begin{proof}
Firstly, we will check that $(U,\phi)$ satisfies $(i)$ and $(ii)$ of Fact \ref{compatibility0}.
Then, by Fact \ref{compatibility0}, there exists $V\subset U$ such that $\mathcal{A}_{(V,\phi )}$ is a Nash atlas for 
$(G,\cdot )$.
Finally, we will show that the identity map from $G$ equipped with $\mathcal{A}$ to $G$ equipped with 
$\mathcal{A}_{(V,\phi )}$ is a locally Nash diffeomorphism, and hence an isomorphism of locally Nash groups.

\smallskip

Let $(U,\phi )\in \mathcal{A}$ be a chart of the identity.
Since $\cdot :G\times G\rightarrow G$ is a locally Nash map when $G$ is equipped with $\mathcal{A}$, by Proposition
\ref{characterization of locally Nash maps}.$(2)$ we deduce the following facts.
\begin{enumerate}
\item[$(1)$] There exists an open neighborhood of the identity $U'\subset U$ such that
\[
\phi \circ \cdot \circ (\phi ^{-1},\phi ^{-1}):\phi (U')\times \phi (U')\rightarrow \phi (U) :
(x,y)\mapsto \phi (\phi ^{-1}(x)\cdot \phi ^{-1}(y))
\]
is a Nash map.
So $(U,\phi )$ satisfies $(i)$ of Fact \ref{compatibility0}.
\item[$(2)$] Fix $g\in G$ and $(W_1,\psi _1),(W_2,\psi _2)\in \mathcal{A}$ coordinate neighborhoods of $g$ and $g^{-1}$ respectively.
Then there exist open neighborhoods $W_1'\subset W_1$ and $W_2'\subset W_2$ of $g$ and $g^{-1}$ respectively such that
\[
\begin{array}{rcrcl}
\phi \circ \cdot \circ (\psi _2 ^{-1},\psi _1 ^{-1})& : & \psi _2 (W_2')\times \psi _1 (W_1')& \rightarrow & \phi (U) \\
& & (z,x) & \mapsto & \phi (\psi _2 ^{-1}(z)\cdot \psi _1 ^{-1}(x))
\end{array}
\]
is a Nash map.
Similarly, there exist open neighborhoods $U_g\subset U$ and $W_1''\subset W_1'$ of the identity and $g$ respectively such that 
\[
\begin{array}{rcrcl}
\psi _1 \circ \cdot \circ (\phi ^{-1},\psi _1 ^{-1})& : & \phi (U_g)\times \psi _1 (W_1'')& \rightarrow & \psi _1 (W_1') \\
& & (x,y) & \mapsto & \psi _1 (\phi ^{-1}(x)\cdot \psi _1 ^{-1}(y))
\end{array}
\]
is a Nash map.
Since semialgebraic sets are closed under projections, we can evaluate the first map at $z=\psi _2 (g^{-1})$ and the second 
at $y=\psi _1 (g)$ to obtain Nash maps again.
Then, composing both maps we deduce that $(U,\phi )$ satisfies $(ii)$ for $g$ of Fact \ref{compatibility0}.
\end{enumerate}
Hence $(U,\phi)$ is under the hypothesis of Fact \ref{compatibility0} and therefore there exists an open neighborhood of the 
identity $V\subset U$ such that $\mathcal{A}_{(V,\phi )}$ is a Nash atlas for $(G,\cdot )$.

\smallskip

Now we check that the identity map from $G$ equipped with $\mathcal{A}$ to $G$ equipped with $\mathcal{A}_{(V,\phi )}$ is a 
locally Nash diffeomorphism.
By Lemma \ref{locally Nash diffeomorphism} it is enough to check that the identity map is both an analytic diffeomorphism and a 
locally Nash map.
Since the identity map is an analytic diffeomorphism between the two analytic groups, it is enough to show that it is a 
locally Nash map.
By definition it suffices to show that for each $g,h\in G$ with $g\in hV$ there exists $(W_1,\psi _1 )\in \mathcal{A}$ with 
$g\in W_1$ and an open neighborhood $W_1'\subset W_1\cap hV$ of $g$ such that ($\psi _1(W_1')$ is semialgebraic and)
\[
\phi _h \circ \psi _1 ^{-1} : \psi _1(W_1') \rightarrow \phi (V): x\mapsto \phi (h^{-1} \psi _1 ^{-1}(x))
\]
is a Nash map.
Let $g$ and $h$ be fixed with $g\in hV$.
Let $(W_2,\psi _2 )\in \mathcal{A}$ be a coordinate neighborhood of $h^{-1}$.
Since $h^{-1}g\in V$ and $\cdot :G\times G\rightarrow G$ is a locally Nash map, when $G$ is equipped with $\mathcal{A}$, there 
exist $(W_1,\psi _1)\in \mathcal{A}$, coordinate neighborhood of $g$, and open neighborhoods $W_2'\subset W_2$ and 
$W_1'\subset W_1$ of $h^{-1}$ and $g$ respectively such that $W_2'\cdot W_1'\subset V$ and
\[
\begin{array}{rcrcl}	
\phi \circ \cdot \circ (\psi _2 ^{-1},\psi _1 ^{-1})& : & (\psi _2 (W_2'), \psi _1(W_1')) & \rightarrow & \phi (V)\\  
& & (x,y) & \mapsto & \phi (\psi _2 ^{-1}(x)\cdot \psi _1 ^{-1}(y))
\end{array}
\]
is a Nash map.
Since semialgebraic sets are closed under projections, we can evaluate the map above at $x=\psi _2 (h^{-1})$ to deduce
that 
\[
\phi _h \circ \psi _1 ^{-1}: \psi _1 (W_1')\rightarrow \phi (V):x \mapsto \phi (h^{-1} \psi _1 ^{-1}(x))
\]
is a Nash map as required.
\end{proof}

The next proposition will provide a sufficient condition for a pure homomorphism of locally Nash groups to be a 
locally Nash homomorphism.
Before proving it, we recall  a result on semialgebraic maps giving a proof different from that in \cite{Fernando_Gamboa_Ruiz}.

\begin{fact}[{\cite[2.4.1]{Fernando_Gamboa_Ruiz}}]\label{Nash cell decomposition}
Let $U$ be an open subset of $\mathbb{R}^n$ and let $f:U\rightarrow \mathbb{R}^m$ be a semialgebraic map.
Then, there exists an open dense subset $V\subset U$ such that $f:V\rightarrow \mathbb{R}^m$ is Nash.
\end{fact}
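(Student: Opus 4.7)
The plan is to reduce to the scalar case $m=1$ and then produce, on each small open ball in $U$, an algebraic identity for $f$ together with an application of the analytic implicit function theorem. It suffices to treat $m=1$, since intersecting the open dense sets obtained for each coordinate yields an open dense set on which $f=(f_1,\ldots,f_m)$ is Nash. Openness and density of the Nash locus is a local property, so the task becomes: given any open ball $B_0\subset U$, exhibit a nonempty open subset of $B_0$ on which $f$ is analytic and algebraic over the coordinates.

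By the standard semialgebraic cell decomposition $f$ is continuous on an open dense subset $U_1\subset U$, so I may shrink $B_0$ to an open ball $B\subset B_0\cap U_1$ on which $f$ is continuous. The graph of $f|_B$ is semialgebraic of dimension $n$ in $\mathbb R^{n+1}$, hence its Zariski closure is a proper algebraic set, and there exists a nonzero $P\in\mathbb R[X_1,\ldots,X_n,Y]$ of positive degree $d$ in $Y$ with $P(x,f(x))=0$ on $B$; I choose $P$ with $d$ minimal. A key step is to upgrade $P$ to be squarefree in $Y$ over $\mathbb R(X)$: if not, the squarefree part $P/\gcd(P,\partial_Y P)\in\mathbb R(X)[Y]$ has smaller $Y$-degree but the same roots as $P$, so after clearing denominators one obtains $\tilde P\in\mathbb R[X,Y]$ of $Y$-degree $<d$ satisfying $\tilde P(x,f(x))=0$ off an algebraic hypersurface in $B$; the continuity of $f$ on $B$ then extends this identity to all of $B$, contradicting the minimality of $d$.

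With $P$ squarefree in $\mathbb R(X)[Y]$ its $Y$-discriminant $\Delta_Y(P)\in\mathbb R[X]$ is a nonzero polynomial, so $B':=\{x\in B:\Delta_Y(P)(x)\neq 0\}$ is open and dense in $B$. For every $x_0\in B'$, $P(x_0,Y)$ has no multiple roots, hence $\partial_Y P(x_0,f(x_0))\neq 0$, and the analytic implicit function theorem yields an analytic $g$ near $x_0$ with $g(x_0)=f(x_0)$ and $P(x,g(x))\equiv 0$; local uniqueness of the continuous solution together with the continuity of $f$ at $x_0$ forces $f=g$ on a neighborhood. Thus $f$ is analytic on $B'$, and since $P(x,f(x))\equiv 0$ with $P$ of positive $Y$-degree, $f$ is Nash on $B'$, as required. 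I expect the main subtlety to lie in the squarefreeness reduction of paragraph two, which genuinely combines the algebraic manipulation in $\mathbb R(X)[Y]$ with the continuity of $f$ to upgrade an identity valid off a small algebraic set to one valid on the whole ball.
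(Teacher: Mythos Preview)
Your proof is correct and takes a genuinely different route from the paper's. The paper argues via a complexity bound: by \cite[Lemma 2.5.2]{Bochnak_Coste_Roy} each coordinate $f_i$ satisfies a polynomial identity of bounded total degree $C$, and then \cite[Theorem 8.10.5]{Bochnak_Coste_Roy} supplies an $r=r(n,C)$ such that every semialgebraic $C^r$ function of complexity $\leq C$ is automatically Nash; a $C^r$ cell decomposition of the graph of $f$ then yields the open dense $V$ as the projection of the union of the $n$-dimensional cells. Your approach is more elementary and self-contained: you bypass the Artin--Mazur-type input of \cite[Theorem 8.10.5]{Bochnak_Coste_Roy} entirely and instead produce analyticity directly from the analytic implicit function theorem once the $Y$-discriminant of the witnessing polynomial is nonzero. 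The squarefree reduction, which you correctly flag as the one genuine subtlety, is exactly what makes the discriminant nonzero and is where continuity of $f$ is used in an essential way.

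One minor point worth tightening: your ball-by-ball presentation shows that the Nash locus is open and dense, but does not immediately exhibit a \emph{semialgebraic} $V$, which the paper's definition of Nash map requires of the domain. This is easily remedied by running your discriminant argument globally rather than locally: since the graph of $f|_{U_1}$ is semialgebraic, a single polynomial $P$ works on all of $U_1$, and then $V=\{x\in U_1:\Delta_Y(P)(x)\neq 0\text{ and the leading $Y$-coefficient of }P\text{ does not vanish at }x\}$ is semialgebraic, open and dense.
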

\begin{proof}
Fix $n$ and $U\subset \mathbb{R}^n$.
We say that $g:U\rightarrow \mathbb{R}$ has complexity $\leq d$ if there is a non-zero polynomial $P$ in $n+1$ variables 
with coefficients in $\mathbb{R}$ of total degree $\leq d$, such that $P(x,g(x))=0$ for all $x\in U$.
We denote by $S^k(U)$ the set of all semialgebraic functions from $U$ to $\mathbb{R}$ such that all its partial derivatives up to order 
$k$ exist and are continuous.
We note that by \cite[Lemma 2.5.2.]{Bochnak_Coste_Roy} for each $i\in \{1,\ldots ,m\}$, there exists a polynomial 
$P_i\in \mathbb{R}[X_1,\ldots ,X_{n+1}]$ such that $P_i(x,f_i(x))=0$, for every $x\in U$.
Hence there exists $C\in \mathbb{N}$ such that all of $f_1,\ldots ,f_m$ have complexity $\leq C$.
By \cite[Theorem 8.10.5]{Bochnak_Coste_Roy} there exists $r=r(n,C)$ such that for every open semialgebraic subset $V$ of 
$\mathbb{R}^n$, every function that belongs to $S^r(V)$ of complexity $\leq C$ is Nash.
Take a $C^r$ cell decomposition of the graph of $f$ (see for example \cite[Ch.7 \textsection 3.3]{van_den_Dries}). Consider the union of all cells of dimension $n$ and let $V$ be its projection over $\mathbb{R}^n$. Then the set $V$ is an open dense subset of $U$ and $f|_{V}$ is Nash.

\end{proof}

\begin{proposition}\label{lochomo} Let $G$ and $H$ be locally Nash groups and let $f:G\rightarrow H$ be a homomorphism of pure 
groups.
Suppose there exist charts $(U,\phi)$ and $(V,\psi)$ of $G$ and $H$ respectively and an open subset $U'\subset U$ such that 
$f(U')\subset V$ and $\psi \circ f \circ \phi^{-1}:\phi(U')\rightarrow \psi(V)$ is a semialgebraic map. 
Then $f$ is a locally Nash homomorphism. 
\end{proposition}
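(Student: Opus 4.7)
The plan is to bootstrap from the semialgebraic hypothesis on a single chart to local Nash behavior everywhere, using the homomorphism property together with the fact that left translations in a locally Nash group are locally Nash diffeomorphisms.

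The first step is to upgrade ``semialgebraic'' to ``Nash'' somewhere. Apply Fact \ref{Nash cell decomposition} to the semialgebraic map $\psi\circ f\circ\phi^{-1}\colon\phi(U')\to\psi(V)$ to obtain an open dense subset $W\subset\phi(U')$ such that $\psi\circ f\circ\phi^{-1}$ is Nash, in particular analytic, on $W$. Set $W_0:=\phi^{-1}(W)\subset U'$; then $f|_{W_0}$ is continuous and, by Proposition \ref{characterization of locally Nash maps}, $f$ is locally Nash at every point of $W_0$. Pick once and for all some $x_0\in W_0$.

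The second step is to propagate this to an arbitrary point $g_0\in G$ using the identity
\[
f(y)\;=\;f(g_0 x_0^{-1})\cdot f(x_0 g_0^{-1}\,y),
\]
valid for all $y\in G$ because $f$ is a group homomorphism. As $y$ ranges over a small enough neighborhood of $g_0$, the element $x_0 g_0^{-1} y$ stays in the locally Nash open set $W_0$ around $x_0$. Since $G$ and $H$ are locally Nash groups, left translation by the fixed elements $x_0 g_0^{-1}\in G$ and $f(g_0 x_0^{-1})\in H$ are locally Nash diffeomorphisms (being restrictions of the locally Nash multiplication maps with one argument fixed, cf.\ the discussion around Fact \ref{compatibility0}). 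Therefore near $g_0$ we can write
\[
f \;=\; L_{f(g_0 x_0^{-1})}\circ f\circ L_{x_0 g_0^{-1}},
\]
which exhibits $f$, near $g_0$, as a composition of locally Nash maps (the middle factor being locally Nash because $L_{x_0 g_0^{-1}}$ sends a neighborhood of $g_0$ into $W_0$, where $f$ is already known to be locally Nash). Invoking Proposition \ref{characterization of locally Nash maps}.$(3)$ at $g_0$ concludes that $f$ is a locally Nash map.

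The only delicate point is that $f$ is a priori just an abstract group homomorphism that happens to be semialgebraic in one pair of charts; in particular continuity and analyticity of $f$ are not given. Both are recovered by Fact \ref{Nash cell decomposition} on the dense open set $W_0$, and then the translation argument automatically transports continuity (hence the topological hypothesis built into Definition \ref{locally Nash map}) to the rest of $G$, so no separate continuity argument is needed.
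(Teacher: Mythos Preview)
Your argument is correct and follows the same two-step strategy as the paper: first upgrade ``semialgebraic'' to ``Nash'' on an open subset via Fact~\ref{Nash cell decomposition}, then propagate to all of $G$ using the homomorphism identity $f = L_{f(g_0 x_0^{-1})}\circ f\circ L_{x_0 g_0^{-1}}$ and the fact that left translations in a locally Nash group are locally Nash. The paper packages the second step slightly differently: it first reduces to the case where $(U,\phi)$ and $(V,\psi)$ are charts at the identity (by composing with translations exactly as you do), and then invokes Proposition~\ref{compatibility1} to replace the original atlases by $\mathcal{A}_{(U,\phi)}$ and $\mathcal{A}_{(V,\psi)}$, after which the translation identity becomes the one-line computation $\psi_{f(g)}\circ f\circ\phi_g^{-1}=\psi\circ f\circ\phi^{-1}$. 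Your version avoids the detour through Proposition~\ref{compatibility1} and is arguably more direct, at the cost of needing to spell out a bit more carefully (as you do) why the composition of three locally-Nash-in-charts maps is again Nash in charts near $g_0$. Your closing remark on continuity is correct but unnecessary: once the map is Nash in charts near every point, continuity is automatic.
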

\begin{proof}
Firstly, we note that by Fact \ref{Nash cell decomposition} and restricting $U'$ if necessary, we can assume that 
the map $\psi \circ f \circ \phi^{-1}:\phi(U')\rightarrow \psi(V)$ is Nash.

Now we prove that $f$ is a locally Nash map, provided that $U'\subset U$ and $V$ are neighborhoods of the identity of $G$ and $H$ 
respectively.
By Proposition \ref{compatibility1} we can assume that the locally Nash groups $G$ and $H$ equipped with $\mathcal{A}_{(U ,\phi)}$ and 
$\mathcal{A}_{(V ,\psi)}$ are locally Nash isomorphic to the original structures.
Let $g\in G$.
We have that $(gU,\phi_g)$ and $(f(g)V,\psi_{f(g)})$ are charts of $G$ and $H$ respectively with $g\in gU'\subset gU$ and 
$f(g)\in f(g)V$.
By Proposition \ref{characterization of locally Nash maps}.$(3)$ it would be enough to show that the map
\[
\psi_{f(g)}\circ f\circ \phi^{-1}_g:\phi(U') \rightarrow \psi(V) 
\]
is Nash.
The latter is true since 
\[
(\psi_{f(g)}\circ f\circ \phi^{-1}_g)(x)=(\psi_{f(g)}\circ f)(g\phi^{-1}(x))=
\]
\[
=\psi_{f(g)}(f(g)f(\phi^{-1}(x)))=\psi(f(g)^{-1}f(g)f(\phi^{-1}(x)))=\psi(f(\phi^{-1}(x))) 
\]
and $\psi \circ f \circ \phi^{-1}$ is Nash.

It remains to prove that we can assume that the relevant open sets can be taken neighborhoods of the identity.
Fix $g\in U'$.
Since the group operation of $G$ is a locally Nash map, there exists a chart $(U_0,\phi_0)$ of $G$ 
and an open neighborhood of the identity $U'_0\subset U_0$ such that $gU'_0\subset U'$ and the map
\[
L_g:\phi_0(U'_0)\rightarrow \phi(U'):x\mapsto \phi(g\phi_0^{-1}(x))
\]
is a Nash map, in particular semialgebraic.
Similarly, there exists a chart $(V_0,\psi_0)$ of the identity of $H$ and an open subset $V'\ni f(g)$ of $V$ such that 
$f(g)^{-1}V'\subset V_0$ and 
\[
L_{f(g)^{-1}}:\psi(V')\rightarrow \psi_0(V_0):x\mapsto \psi_0(f(g)^{-1}\psi^{-1}(x))
\]
is a semialgebraic map.
By continuity and since $(\psi \circ f \circ \phi^{-1}\circ L_g)(\phi_0(e))=\psi(f(g))$, we can take $U'_0$ small enough so that 
\[
(\psi \circ f \circ \phi^{-1}\circ L_g)(\phi _0(U'_0))\subset\psi(V').
\]
In particular the composition
\[
\begin{array}{rcccl}
L_{f(g)^{-1}}\circ \psi \circ f \circ \phi^{-1}\circ L_g & : & \phi_0(U'_0) & \rightarrow & \psi_0(V_0) \\
& & x & \mapsto & \psi_0(f(\phi^{-1}_0(x)))
\end{array}
\]
is a semialgebraic map, as required.
\end{proof}

Next we will characterize those isomorphisms of pure groups which are isomorphisms of locally Nash groups.

\begin{proposition}\label{compatibilityAut}
Let $G$ and $H$ be locally Nash groups equipped with atlases $\mathcal{A}$ and $\mathcal{B}$ respectively.
Then, a continuous isomorphism $\alpha :G \rightarrow H$ is an isomorphism of locally Nash groups if and only if there exist 
(for all) charts of the identity $(U,\phi)\in\mathcal{A}$ and $(V, \psi) \in \mathcal{B}$ with an open neighborhood of the identity  
$W\subset  U \cap \alpha^{-1}(V)$ such that $\psi\circ \alpha$ is algebraic over $\mathbb{R}(\phi)$ on $W$. 
\end{proposition}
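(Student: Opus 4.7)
The proof hinges on the characterization of Nash maps used throughout the paper: a map $\psi\circ\alpha\circ\phi^{-1}:\phi(W)\to\psi(V)$ is Nash exactly when $\phi(W)$ is semialgebraic and the map is both analytic and algebraic over $\mathbb{R}(y)$ (\cite[Proposition 8.1.8]{Bochnak_Coste_Roy}). Under the substitution $y=\phi(x)$, the algebraicity condition reads precisely as ``$\psi\circ\alpha$ is algebraic over $\mathbb{R}(\phi)$ on $W$'', so the task reduces to pairing this condition with the other two (analyticity and semialgebraicity of the domain) in each direction.

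For the ``only if'' direction (which yields the stronger ``for all'' version automatically), assume $\alpha$ is an isomorphism of locally Nash groups and fix arbitrary charts of the identity $(U,\phi)\in\mathcal{A}$ and $(V,\psi)\in\mathcal{B}$. Since $\alpha$ is a locally Nash map with $\alpha(e_G)=e_H\in V$, Proposition \ref{characterization of locally Nash maps}.$(2)$ produces an open neighborhood $W\subset U$ of $e_G$ with $\alpha(W)\subset V$ such that $\psi\circ\alpha\circ\phi^{-1}:\phi(W)\to\psi(V)$ is Nash; the dictionary above then delivers exactly the required algebraicity of $\psi\circ\alpha$ over $\mathbb{R}(\phi)$ on $W$.

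For the ``if'' direction, fix charts $(U,\phi)$, $(V,\psi)$ and an open neighborhood $W\subset U\cap\alpha^{-1}(V)$ of $e_G$ as in the hypothesis. The plan is to first invoke the classical Cartan/von Neumann theorem that a continuous homomorphism between Lie groups is automatically analytic: applied to both $\alpha$ and $\alpha^{-1}$ (both continuous, since $\alpha$ is a continuous group isomorphism and a locally Nash group carries a canonical Lie group structure), this promotes $\alpha$ to an analytic diffeomorphism and makes $\psi\circ\alpha\circ\phi^{-1}$ analytic on $\phi(W)$. Shrinking $W$ to the $\phi$-preimage of a small open ball centered at $\phi(e_G)$ makes $\phi(W)$ semialgebraic, so by the characterization the map $\psi\circ\alpha\circ\phi^{-1}$ is Nash, hence in particular semialgebraic. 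Proposition \ref{lochomo} now applies and yields that $\alpha$ is a locally Nash homomorphism; combining this with the analytic diffeomorphism property via Lemma \ref{locally Nash diffeomorphism} gives that $\alpha$ is a locally Nash diffeomorphism, and together with being a group isomorphism this is precisely what it means to be an isomorphism of locally Nash groups. The main conceptual obstacle is bridging mere continuity of $\alpha$ to the analyticity needed to invoke the Nash characterization; the Cartan/von Neumann input is the cleanest tool for this, the alternative being a more hands-on argument extracting semialgebraicity directly from continuity plus the polynomial identity (via continuous selection of algebraic branches) and only then recovering analyticity from Nash-ness.
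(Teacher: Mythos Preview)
Your proof is correct, and the ``only if'' direction is identical to the paper's. For the ``if'' direction, however, you take a genuinely different route.

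The paper never invokes Cartan's theorem. Instead, it uses only the \emph{continuity} of $\psi_i\circ\alpha\circ\phi^{-1}$ together with the polynomial identity $P(x,(\psi_i\circ\alpha\circ\phi^{-1})(x))=0$ and the proof of \cite[Proposition~8.1.8]{Bochnak_Coste_Roy} to conclude directly that the map is \emph{semialgebraic} (this is exactly the ``alternative'' you sketch at the end of your proposal). Proposition~\ref{lochomo} then gives that $\alpha$ is a locally Nash homomorphism. For the inverse, the paper observes that the inverse of a semialgebraic bijection is semialgebraic and applies Proposition~\ref{lochomo} a second time to $\alpha^{-1}$, bypassing Lemma~\ref{locally Nash diffeomorphism} entirely.

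Your approach is conceptually clean but imports two external Lie-theoretic facts: Cartan's automatic smoothness theorem and (implicitly, to justify that $\alpha^{-1}$ is continuous) the open mapping theorem for locally compact second-countable groups. The paper's approach stays entirely within the semialgebraic toolkit already set up and is therefore more self-contained; it also illustrates why Proposition~\ref{lochomo} was formulated with the weak hypothesis ``semialgebraic'' rather than ``Nash''. Either argument is fine, and it is worth noting that you correctly identified the paper's route as the natural alternative.
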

\begin{proof}
We first prove the right to left implication.
Fix $i\in\{1,\ldots,n\}$.
By hypothesis $\psi_i\circ \alpha$ is algebraic over $\mathbb{R}(\phi)$ on $W$ and therefore, since $\phi$ is a diffeomorphism,
$\psi_i \circ \alpha \circ \phi^{-1}$ is algebraic over $\mathbb{R}(\text{id})$ on $\phi(W)$. 
Hence there exists a polynomial $P\in \mathbb{R}[x][Y]$ such that $P(x,(\psi_i \circ \alpha \circ \phi ^{-1})(x)) = 0$ for all 
$x \in \phi(W)$.
Without loss of generality, we can assume that $\phi(W)$ is semialgebraic.
Then, by the proof of \cite[Proposition 8.1.8]{Bochnak_Coste_Roy} and since $\psi_i \circ \alpha \circ \phi ^{-1}$ is continuous, 
we obtain that each coordinate function $\psi_i \circ \alpha \circ \phi^{-1}$ is a semialgebraic function on $\phi(W)$.
By Proposition \ref{lochomo} we deduce that $\alpha$ is a locally Nash map.
Moreover, we also have that the inverse of the above map,
\[
\phi \circ \alpha^{-1} \circ \psi^{-1}:\psi(\alpha(W))\rightarrow \phi(W)\subset \phi(U), 
\]
is  semialgebraic and therefore, again by Proposition \ref{lochomo}, we deduce that $\alpha^{-1}$ is a locally Nash map.
Thus $\alpha$ is a locally Nash isomorphism.  

Now, we show the left to right implication.
Fix charts of the identity $(U,\phi )\in \mathcal{A}$ and $(V,\psi )\in \mathcal{B}$.
Since $\alpha$ is a locally Nash map, by Proposition \ref{characterization of locally Nash maps}.$(2)$ there exists an open 
neighborhood of the identity $W\subset U\cap \alpha ^{-1}(V)$ such that
\[
\psi \circ \alpha \circ \phi ^{-1} : \phi (W) \rightarrow \psi (V): x\mapsto \psi (\alpha ( \phi ^{-1}(x)))
\]
is a Nash map.
So $\psi \circ \alpha \circ \phi ^{-1}$ is algebraic over $\mathbb{R}(\text{id})$ on $\phi(W)$ and hence $\psi \circ \alpha$ 
is algebraic over $\mathbb{R}(\phi)$ on $W$ as required.
\end{proof}

Proposition \ref{compatibilityAut} leads to an immediate corollary:

\begin{corollary}\label{compatibilityGerm}
Let $(G,\cdot )$ equipped with a Nash atlas $\mathcal{A}$ be a locally Nash group.
Let $(U,\phi )$ and $(V,\psi)$ be charts of the identity of $\mathcal{A}$.
If $(G,\cdot,\phi |_U)$ and $(G,\cdot,\psi |_V)$ are locally Nash groups then they are locally Nash isomorphic.
\end{corollary}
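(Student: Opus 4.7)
The plan is to apply Proposition \ref{compatibilityAut} directly to the identity map
\[
\mathrm{id}_G : (G,\cdot,\phi|_U) \longrightarrow (G,\cdot,\psi|_V),
\]
showing that it is an isomorphism of locally Nash groups. First I would observe that $\mathrm{id}_G$ is trivially a group isomorphism, and it is continuous because both atlases $\mathcal{A}_{(U,\phi)}$ and $\mathcal{A}_{(V,\psi)}$ induce on $G$ the same underlying analytic structure as $\mathcal{A}$: the left translations $L_g$ are analytic diffeomorphisms of the analytic group $(G,\cdot)$, so the charts $\phi_g = \phi\circ L_{g^{-1}}$ and $\psi_g = \psi\circ L_{g^{-1}}$ are analytically compatible with the original atlas $\mathcal{A}$. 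Hence the three analytic manifold structures coincide, and $\mathrm{id}_G$ is continuous (indeed, an analytic diffeomorphism).

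Next, I would supply the charts required by Proposition \ref{compatibilityAut}. Take $(U,\phi)\in\mathcal{A}_{(U,\phi)}$ (this is the chart indexed by $g=e$) and $(V,\psi)\in\mathcal{A}_{(V,\psi)}$, both of which are charts of the identity in their respective atlases. Set $W := U\cap V$, an open neighborhood of the identity contained in $U\cap \mathrm{id}_G^{-1}(V)$. It remains to verify that $\psi\circ \mathrm{id}_G = \psi$ is algebraic over $\mathbb{R}(\phi)$ on $W$.

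This last step is essentially free from the hypothesis that $(U,\phi)$ and $(V,\psi)$ both belong to the \emph{same} Nash atlas $\mathcal{A}$ of $(G,\cdot)$: by the definition of Nash atlas these two charts are Nash compatible, so the transition map
\[
\psi\circ \phi^{-1} : \phi(U\cap V) \longrightarrow \psi(U\cap V)
\]
is a Nash diffeomorphism. Equivalently, $\psi\circ\phi^{-1}$ is algebraic over $\mathbb{R}(\mathrm{id})$ on $\phi(W)$, and composing with $\phi$ on the right gives that $\psi$ is algebraic over $\mathbb{R}(\phi)$ on $W$. This is exactly the hypothesis of the right-to-left implication of Proposition \ref{compatibilityAut}, so that proposition produces the desired locally Nash isomorphism between $(G,\cdot,\phi|_U)$ and $(G,\cdot,\psi|_V)$.

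I do not anticipate any serious obstacle here: all the work was done in Proposition \ref{compatibilityAut} and in the fact that the two initial charts live in a common Nash atlas. The only minor point worth being careful about is checking that the identity map is genuinely continuous/analytic between the two locally Nash structures, which, as explained above, follows from the fact that both atlases $\mathcal{A}_{(U,\phi)}$ and $\mathcal{A}_{(V,\psi)}$ refine the analytic structure of $(G,\cdot)$ given by $\mathcal{A}$.
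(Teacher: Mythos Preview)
Your proposal is correct and follows essentially the same approach as the paper: use the Nash compatibility of $(U,\phi)$ and $(V,\psi)$ in $\mathcal{A}$ to deduce that $\psi$ is algebraic over $\mathbb{R}(\phi)$ on $U\cap V$, then apply Proposition~\ref{compatibilityAut} with $\alpha$ the identity map. The paper's version is slightly terser, omitting the explicit discussion of continuity of $\mathrm{id}_G$, but the argument is the same.
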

\begin{proof}
Since $(U,\phi )$ and $(V,\psi)$ are charts that are Nash compatible,
\[
\psi \circ \phi ^{-1}: \phi (U\cap V)\rightarrow \psi (U\cap V) : x\mapsto \psi \circ \phi ^{-1}(x)
\]
is semialgebraic.
So $\psi$ is algebraic over $\mathbb{R}(\phi)$ on $U\cap V$. 
Now apply Proposition \ref{compatibilityAut} taking $\alpha$ as the identity map.
\end{proof}

As a special case of Corollary \ref{compatibilityGerm}, we have that if there exists neighborhoods of the identity $U$ and $V$ such that 
$(G,\cdot,\phi |_U)$ and $(G,\cdot,\phi |_V)$ are locally Nash groups then both are locally Nash isomorphic.

\section{Algebraic Addition Theorems.}\label{AAT}

In this section we review the principal properties of algebraic addition theorems which can be found in the literature 
and we prove some new ones, in particular Theorem \ref{T2}.
Since most of the main references on this concept are outdated, we include proofs with modern notation.
We will work with power series instead of with germs of analytic functions.
There are two good reasons to do this.
The first one is that most of the classical sources about algebraic addition theorems follow this line.
The second one is that working in an algebraic context we can remark that some calculations are formal. 
We do not want to think in terms of germs and meromorphic functions yet, this will be done in the next section, 
where we will extend our results to meromorphic and real meromorphic functions.

Firstly, we introduce the notation for the power series.
Let $\mathbb{K}$ be $\mathbb{C}$ or $\mathbb{R}$.
Let $A_{\mathbb{K},n}$ be the ring of all power series in $n$ variables with coefficients in $\mathbb{K}$ that are convergent 
in a neighborhood of the origin.
We recall that $A_{\mathbb{K},n}$ is an integral domain.
Let $M_{\mathbb{K},n}$ be the quotient field of $A_{\mathbb{K},n}$.
For each $\epsilon >0$ we denote $U_{\mathbb{K},n}(\epsilon)$ the open ball $\{  k\in \mathbb{K}^n:\|  k\| <\epsilon \}$.
Since we will only consider convergence over open subsets of $\mathbb{C}^n$, we denote $U_n(\epsilon )$ the open ball 
$U_{\mathbb{C},n}(\epsilon)$.
Thus, we say $(\phi _1,\ldots ,\phi _m) \in M_{\mathbb{K},n}^m$ is {\itshape convergent} in $U_n(\epsilon )$ if each 
$\phi _1,\ldots ,\phi _m$ is the quotient of two power series convergent in $U_n(\epsilon )$.

Let us recall the relation between power series and analytic functions
(see \cite{Gunning_Rossi} and \cite{Narashiman} for basic notions of complex and real analytic geometry respectively).
Let $U\subset \mathbb{K}^n$ be an open connected neighborhood of $0$. 
We denote by $\mathcal{O}_{\mathbb{K},n}(U)$ the ring of all analytic functions in $U$ and by $\mathcal{O}_{\mathbb{K},n}$ 
the ring of germs of analytic functions at $0$. For each $f$ in $\mathcal{O}_{\mathbb{K},n}(U)$ or $\mathcal{O}_{\mathbb{K},n}$ we denote by ${^t f}$ its Taylor power series expansion at $0$. The map
\[
^a : A_{\mathbb{K},n}\rightarrow \mathcal{O}_{\mathbb{K},n}: \phi \mapsto {^a \phi}
\]
that assigns to each $\phi$ the germ of the analytic function
\[
^a \phi :U_\phi \rightarrow \mathbb{K}:  k \mapsto \phi (k) 
\]
where $U_\phi \subset \mathbb{K}^n$ is an open neighborhood of $0$ where $\phi$ converges, is an isomorphism of rings whose inverse is given by the Taylor power series expansions at $0$.

On the other hand, by means of the identity principle for analytic functions, both $\mathcal{O}_{\mathbb{K},n}(U)$ and $\mathcal{O}_{\mathbb{K},n}$ are integral domains. We denote 
by $\mathcal{M}_{\mathbb{K},n}(U)$ and $\mathcal{M}_{\mathbb{K},n}$ their respectives quotient fields. The maps $^a$ and $^t$ are naturally defined for these quotients fields and give us also an isomorphism of $M_{\mathbb{K},n}$  and $\mathcal{M}_{\mathbb{K},n}$. Similarly, we define ${^ a}$ and ${^ t}$ for tuples.

\begin{remark}\label{rmkPoincare}\emph{A meromorphic function on $U$ is a global section of the sheaf over $U$ whose stalk in $x\in U$ is the quotient field of the germs of analytic functions in $x$. In other words, a meromorphic function on $U$ is given by an open covering $\{U_j\}_{j\in J}$ of $U$ and a collection of analytic functions $h_j,g_j:U_j\rightarrow \mathbb{C}$ such that 
$$g_j \cdot h_{\ell}=g_{\ell} \cdot h_j \qquad \text{ in } U_j\cap U_{\ell}.$$ 
Although clearly the elements of $\mathcal{M}_{\mathbb{K},n}(U)$ are meromorphic functions, the converse is not necessarily true. The problem of determining whether or not the converse holds for a certain $U$ is known as the Poincar\'e problem. For example, it holds if $U=\mathbb{C}^n$ (see \cite[Ch. VIII, \textsection B, Corollary 10]{Gunning_Rossi}).
}
\end{remark}

Now we give a more precise formulation of algebraic addition theorem.
\begin{notation}\label{notation-AAT}
\emph{ Let $\phi :=(\phi _1,\ldots ,\phi _m)\in M_{\mathbb{K},n}^m$ be convergent in $U_n(\epsilon )$, let 
$k\in U_{\mathbb{K},n}(\epsilon )$ and let $(u,v):=(u_1,\ldots ,u_n,v_1,\ldots ,v_n)$ be $2n$ variables. 
We will use the following notation:
\begin{enumerate}
\item[$(1)$] $\phi _{(u,v)}:=\big(\phi _1( u),\ldots ,\phi _m( u), \phi _1( v),\ldots ,\phi _m( v)\big)
\in M_{\mathbb{K},2n}^{2m}$.
\item[$(2)$] $\phi _{ u+ v}:=\big(\phi _1( u+ v),\ldots ,\phi _m( u+ v)\big)
\in M_{\mathbb{K},2n}^m$.
\item[$(3)$] $\phi _{u+k}:=\big(\phi _1( u+ k),\ldots ,\phi _m( u+ k)\big)
\in M_{\mathbb{K},n}^m$.
\end{enumerate}
}
\end{notation}
Given $\phi \in M_{\mathbb{K},\ell}^{n}$ and $\psi \in M_{\mathbb{K},\ell}^{m}$ we say that $\phi$ is {\itshape algebraic} over 
$\mathbb{K}(\psi):=\mathbb{K}(\psi _1 , \ldots ,\psi _m)$ if $\phi _1,\ldots ,\phi _n$ are algebraic over $\mathbb{K}(\psi)$.
\begin{definition}
\emph{
We say $\phi \in M_{\mathbb{K},n}^n$ admits an \emph{algebraic addition theorem (AAT)} if $\phi _1,\ldots ,\phi _n$ are 
algebraically independent over $\mathbb{K}$ and $\phi _{u+v}$ is algebraic over $\mathbb{K}(\phi _{(u,v)})$.
}
\end{definition}

The rest of the section is divided as follows.
Firstly, we will adapt some technical lemmas of \cite{Bochner_Martin} and \cite{Siegel} to our context to prove some separated rationality result (Proposition \ref{S-BM-alg}), 
these results will be needed for proving Theorem \ref{T2}.
Secondly, we will prove some properties of elements of $M_{\mathbb{K},n}$ admitting an AAT and Theorem \ref{T2}.
Finally, we will prove some properties of differentials that will be needed for proving Theorem \ref{T1} in 
Section \ref{meromorphic maps}, when we consider AAT related to charts of the identity of locally Nash groups.

\medskip

\subsection{A separate rationality result.} We begin with some technical lemmas. Let $u=(u_1,\ldots,u_p)$ and $v=(u_1,\ldots,u_q)$ be variables and let $\phi\in M_{\mathbb{K},p+q}^m$ be convergent in $U_{p+q}(\epsilon)$, that is, $\phi(u,v)=\frac{\alpha(u,v)}{\beta(u,v)}$ for $\alpha,\beta \in \mathbb{A}_{\mathbb{K},p+q}$ convergent in $U_{p+q}(\epsilon)$. Given a point $k\in \mathbb{K}^q$ we write $\phi (u,k)\in M_{\mathbb{K},p}$ if $\beta(u,k)\neq 0$.

\begin{lemma}\label{pole} Let $\phi :=(\phi _1,\ldots ,\phi _m)\in M_{\mathbb{K},n}^m$ be convergent in $U_n(\epsilon)$.
Let $p,q\in \mathbb{N}$ such that $p+q=n$.
\begin{enumerate}
\item[$(1)$]\label{analytic} There exists an open dense subset $U$ of $U_{\mathbb{K},n}(\epsilon)$ such that
\[
^a \phi :U\rightarrow \mathbb{K}^m: k \mapsto \phi (k) 
\]
is an analytic function.
\item[$(2)$] There exists an open dense subset $V$ of $U_{\mathbb{K},q}(\epsilon)$ such that
\[
V\subset \{ k\in U_{\mathbb{K},q}(\epsilon): \phi (u,k)\in M_{\mathbb{K},p}^m\}.
\]
\item[$(3)$] If there exists an open subset $W$ of $U_{\mathbb{K},q}(\epsilon)$ such that
\[
W\subset \{ a\in U_{\mathbb{K},q}(\epsilon) : \phi (u,a)\in M_{\mathbb{K},p} \text{ and } \phi (u,a)=0\}
\]
then $\phi =0$.
\end{enumerate}
\end{lemma}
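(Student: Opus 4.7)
The plan is to fix, for each $i$, a representation $\phi_i=\alpha_i/\beta_i$ with $\alpha_i,\beta_i\in A_{\mathbb{K},n}$ convergent in $U_n(\epsilon)$ and $\beta_i\ne 0$, and then at each step to invoke the identity principle for convergent power series.

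For $(1)$: since each $\beta_i\ne 0$ in $A_{\mathbb{K},n}$, the corresponding analytic function ${}^a\beta_i$ on $U_{\mathbb{K},n}(\epsilon)$ is not identically zero, and by the identity principle its zero set $N_i$ is closed and nowhere dense (in the real case one may either appeal directly to the uniqueness of the Taylor expansion on the connected domain of convergence, or complexify). The set $U:=U_{\mathbb{K},n}(\epsilon)\setminus\bigcup_{i=1}^m N_i$ is then open and dense, and on $U$ each ${}^a\phi_i={}^a\alpha_i/{}^a\beta_i$ is analytic.

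For $(2)$: expand each $\beta_i$ as a power series in $u$ with coefficients in $A_{\mathbb{K},q}$, writing $\beta_i(u,v)=\sum_\nu c_{i,\nu}(v)\,u^\nu$. Since $\beta_i\ne 0$, some coefficient $c_{i,\nu(i)}\ne 0$, and by the argument used in $(1)$ the set $\{k\in U_{\mathbb{K},q}(\epsilon):c_{i,\nu(i)}(k)\ne 0\}$ is open and dense. The finite intersection $V$ over $i=1,\dots,m$ is still open and dense, and for $k\in V$ each $\beta_i(u,k)\in A_{\mathbb{K},p}$ is nonzero, so $\phi(u,k)\in M_{\mathbb{K},p}^m$.

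For $(3)$: it suffices to argue componentwise, so assume $m=1$ and write $\phi=\alpha/\beta$. For each $a\in W$ the hypothesis that $\phi(u,a)\in M_{\mathbb{K},p}$ and equals zero forces $\beta(u,a)\ne 0$ and $\alpha(u,a)=0$ in $A_{\mathbb{K},p}$. Expanding $\alpha(u,v)=\sum_\nu c_\nu(v)\,u^\nu$ with $c_\nu\in A_{\mathbb{K},q}$, this means $c_\nu(a)=0$ for every multi-index $\nu$ and every $a\in W$; since $W$ is a nonempty open subset of the connected set $U_{\mathbb{K},q}(\epsilon)$, the identity principle gives $c_\nu\equiv 0$ for each $\nu$, whence $\alpha=0$ and $\phi=0$. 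No substantive obstacle appears; the only care required is to pick representatives $\alpha_i,\beta_i$ for the quotient elements and to apply the identity principle uniformly in the real and complex settings.
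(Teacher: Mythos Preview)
Your proof is correct and follows essentially the same approach as the paper: choose representations $\phi_i=\alpha_i/\beta_i$ and apply the identity principle throughout. The only cosmetic differences are in (2), where the paper simply takes $V:=\pi(U)$ as the projection of the set from (1) (if $(a,k)\in U$ then $\beta_i(a,k)\ne 0$, so $\beta_i(u,k)\ne 0$ as a power series) rather than expanding $\beta_i$ coefficient-wise, and in (3), where the paper observes directly that the $\alpha_i$ vanish on the open set $\{(a,b)\in U_{\mathbb{K},n}(\epsilon):b\in W\}$ rather than arguing coefficient-by-coefficient.
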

\begin{proof}
For each $i\in \{1,\ldots ,m\}$ let $\alpha _i,\alpha _{m+i}\in A_{\mathbb{K},n}$, $\alpha _{m+i}\neq 0$, such that 
$\phi _i=\frac{\alpha _i}{\alpha _{m+i}}$.
For each $i\in \{1,\ldots ,2m\}$ let $^a\alpha _i:U_n(\epsilon)\rightarrow \mathbb{K}: k\mapsto \alpha _i( k)$.

For the first property we note that since $^a\alpha _{m+1},\ldots ,^a\alpha _{2m}$ are analytic in $U_n(\epsilon)$ 
and not identically zero, the set
\[
U:=\{ k\in U_{\mathbb{K},n}(\epsilon ): \alpha _{m+1}(k)\cdot \ldots \cdot \alpha _{2m}(k) \neq 0\}
\]
is an open dense subset of $U_{\mathbb{K},n}(\epsilon)$ by the identity principle.

For the second property we may project and take the open set $V:=\pi (U)$.

For the third property we note that since $^a\alpha _1,\ldots ,^a\alpha _m$ are analytic in $U$ and identically zero in 
$\{ (a,b)\in U : b\in W\}$, they are identically zero in an open subset of $U$.
So $\alpha _1=\ldots =\alpha _m=0$.
\end{proof}

We anticipate that in the next lemma the hypothesis that $f( u, v)\in M_{\mathbb{K},2n}$ is convergent in 
$\{(a,b)\in \mathbb{C}^{2n}: a+b\in U_n(\epsilon )\}$ can be weakened to be convergent in some open subset 
of $\mathbb{C}^{2n}$ containing $U_n(\epsilon )\times \{ 0\}$.

\begin{lemma}\label{core}
Let $\epsilon >0$.
Let $\phi \in M_{\mathbb{K},n}^m$ be convergent in $U_n(\epsilon )$ and $\phi _1,\ldots ,\phi _m$ 
algebraically independent over $\mathbb{K}$.
Let $f(u,v)\in M_{\mathbb{K},2n}$ be convergent in $\{(a,b)\in \mathbb{C}^{2n}: a+b\in U_n(\epsilon )\}$ 
and $f(u,k)\in M_{\mathbb{K},n}$ for all $k\in U_{\mathbb{K},n}(\epsilon )$.
If $f(u,v)$ is algebraic over $\mathbb{K}(\phi _{(u,v)})$ then $f(u,k)$ is algebraic over 
$\mathbb{K}(\phi (u))$ for each $k\in U_{\mathbb{K},n}(\epsilon )$.
Furthermore, there exist $N\in \mathbb{N}$ and $h\leq N$ such that for each $k\in U_{\mathbb{K},n}(\epsilon )$, 
the minimal polynomial of $f(u,k)$ over $\mathbb{K}(\phi (u))$ can be written in the form
\[
Y^h+\sum _{i=0}^{h-1} \frac{R_i(\phi )}{S_i(\phi )} Y^i \quad R_i,S_i \in \mathbb{K}[X_1,\ldots ,X_m]^{\leq N}, S_i\neq 0,
\]
where $\mathbb{K}[X_1,\ldots ,X_m]^{\leq N}$ denotes the polynomials of $\mathbb{K}[X_1,\ldots ,X_m]$ whose degree in each of 
the variables $X_1,\ldots ,X_m$ is bounded by $N$.
\end{lemma}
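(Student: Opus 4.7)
The plan is to find an algebraic relation for $f(u,v)$ over $\mathbb{K}(\phi_{(u,v)})$ with uniformly bounded complexity and to specialize it at $v=k$. By hypothesis there is an irreducible polynomial $P(X,Z,Y)=\sum_{j=0}^d A_j(X,Z)\,Y^j\in\mathbb{K}[X_1,\ldots,X_m,Z_1,\ldots,Z_m,Y]$ of $Y$-degree $d\geq 1$ with $P(\phi(u),\phi(v),f(u,v))=0$ in $M_{\mathbb{K},2n}$; let $N$ bound the degree of $P$ in each variable. A key initial observation is that, since $\phi_1,\ldots,\phi_m$ are algebraically independent over $\mathbb{K}$ and $u,v$ are disjoint variables, the $2m$ functions $\phi_i(u),\phi_i(v)\in M_{\mathbb{K},2n}$ are jointly algebraically independent over $\mathbb{K}$; this follows by expanding any purported relation in the monomial basis $\phi(v)^\beta$ and applying algebraic independence first in $v$ (after specializing $u$ at a generic point) and then in $u$.

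For a ``generic'' $k$---one where $\phi(k)$ is defined and at least one $A_j(X,\phi(k))$ is nonzero in $\mathbb{K}[X]$---I would substitute $v=k$ in the relation to obtain $\sum_j A_j(\phi(u),\phi(k))f(u,k)^j=0$ in $M_{\mathbb{K},n}$. The algebraic independence of $\phi_1(u),\ldots,\phi_m(u)$ guarantees that at least one coefficient $A_j(\phi(u),\phi(k))$ is nonzero, yielding a nontrivial polynomial relation for $f(u,k)$ over $\mathbb{K}(\phi(u))$ of degree $\leq d\leq N$, whose coefficients are polynomials in $\phi_1(u),\ldots,\phi_m(u)$ of degree $\leq N$ in each. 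The minimal polynomial of $f(u,k)$ divides this relation in $\mathbb{K}(\phi(u))[Y]$; by Gauss's lemma applied to the UFD $\mathbb{K}[X_1,\ldots,X_m]$ (identified with $\mathbb{K}[\phi(u)]$), the monic representation of the minimal polynomial has coefficients $R_i(\phi)/S_i(\phi)$ with $R_i,S_i\in\mathbb{K}[X_1,\ldots,X_m]^{\leq N}$, exactly as claimed.

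For the exceptional $k$'s, two situations must be addressed. If some $\phi_i$ has a pole at $k$, write $\phi_i=\alpha_i/\beta_i$ with $\alpha_i,\beta_i\in A_{\mathbb{K},n}$ and multiply the original identity by a sufficiently high power of $\prod_i\beta_i(v)$ to obtain $\widetilde P(\phi(u),\alpha(v),\beta(v),f(u,v))=0$ with $\widetilde P$ polynomial in all variables; since $\alpha_i(k),\beta_i(k)$ are honest numbers in $\mathbb{K}$ for every $k\in U_{\mathbb{K},n}(\epsilon)$, specialization at $v=k$ is well-defined. If the resulting specialization in $Y$ becomes identically zero, I would expand $\widetilde P$ in a Taylor series around $(\alpha(k),\beta(k))$ and extract the lowest-order nonzero terms; dividing out the common vanishing factor produces a nontrivial relation for $f(u,k)$ over $\mathbb{K}(\phi(u))$, using the hypothesis $f(u,k)\in M_{\mathbb{K},n}$ to ensure the division is valid.

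The main obstacle is showing that, in the exceptional case, the minimal polynomial still has complexity bounded by the same $N$ obtained in the generic case. This requires careful tracking of degrees through the homogenization and Taylor-expansion manipulations so that a uniform $N$, depending only on the initial data and not on $k$, works for every $k$; this kind of bookkeeping is precisely the content of the classical separate-rationality theorems of Bochner--Martin and Siegel that the authors cite.
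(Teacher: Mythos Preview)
Your treatment of the generic $k$ and the Gauss--lemma/UFD argument to control the minimal polynomial coincides with what the paper does.  The divergence is entirely in the exceptional case, and there your proposal is not a proof but a hope: you yourself flag the uniform bound as ``the main obstacle'' and then defer to the references.  The Taylor-expansion idea you sketch is problematic.  After homogenizing to $\widetilde P(\phi(u),\alpha(v),\beta(v),f(u,v))=0$ and expanding around $(\alpha(k),\beta(k))$, the lowest-order part is a sum of terms $c_\gamma(\phi(u),f(u,v))\prod_i(\alpha_i(v)-\alpha_i(k))^{\gamma_i}(\beta_i(v)-\beta_i(k))^{\gamma'_i}$ with $|\gamma|+|\gamma'|$ minimal.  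These monomials in $v$ are in general linearly independent in $M_{\mathbb{K},n}$ as functions of $v$, so the identity forces each $c_\gamma(\phi(u),f(u,k))=0$ separately; but you have no guarantee that any individual $c_\gamma(X,Y)$ is nonzero as a polynomial, and if they all vanish you are back where you started with no mechanism to terminate.  ``Dividing out the common vanishing factor'' is not well-defined here because there is no single parameter to divide by.

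The paper's argument for the bad $k$ is different and elegant: take a sequence $k_r\to k$ of generic points, normalize the coefficient vectors of the specialized polynomials $Q_{k_r}$ to lie on the unit sphere in $\mathbb{K}^{(N+1)^{m+1}}$, and use compactness of the sphere to pass to a convergent subsequence.  Writing $f=\alpha/\beta$ with $\alpha,\beta\in A_{\mathbb{K},2n}$ and clearing denominators, continuity of $\alpha(u,\cdot)$ and $\beta(u,\cdot)$ lets you pass to the limit in the relation, and the limit coefficient vector still has norm $1$, hence is nonzero.  Algebraic independence of $\phi_1,\ldots,\phi_m$ then ensures the limiting polynomial is nontrivial in $Y$.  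This produces, for \emph{every} $k$, a relation with exactly the same degree bound $N$ as the generic one, with no bookkeeping required.  This compactness trick is the missing idea in your proposal.
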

\begin{proof}
We could try to evaluate the minimal polynomial of $f(u,v)$ over $\mathbb{K}(\phi _{(u,v)})$ at $k$ but we may have problems if any 
of the denominators becomes the zero polynomial in $\mathbb{K}(\phi (u))$.
So we are going to modify the original polynomial while keeping the original degree.

Since $f(u,v)$ is algebraic over $\mathbb{K}(\phi _{(u,v)})$ and $\phi_1,\ldots ,\phi _m$ algebraically independent over 
$\mathbb{K}$, there exists $P\in \mathbb{K}[X_1,\ldots ,X_{2m}][Y]$ such that $P(\phi _{(u,v)};Y)\neq 0$ and 
$P(\phi _{(u,v)};f (u,v))= 0$.
Hence, there exists $N\in \mathbb{N}$ such that
\[
P(X_1,\ldots ,X_{2m};Y)=\sum _{j, \mu ,\nu \leq N}  a_{j,\mu ,\nu } \ X_1^{\mu _1}\ldots X_m^{\mu _m}
X_{m+1}^{\nu _1}\ldots X_{2m}^{\nu _m}Y^j,
\]
with $a_{j,\mu ,\nu }\in \mathbb{K}$ and where for each $\delta \in \mathbb{N}^m$, $\delta \leq N$ denotes 
$\delta _1\leq N$,\ldots ,$\delta _m\leq N$.
We will prove that this $N$ is the required one in the statement of the lemma.
Firstly we prove some claims.

\begin{claim}\label{claim1} There exists an open dense subset $U$ of $U_{\mathbb{K},n}(\epsilon )$ such that for each 
$k\in U$, $P(X_1,\ldots ,X_m,\phi (k);Y)$ is a non-zero polynomial of $\mathbb{K}[X_1,\ldots ,X_m][Y]$.
\end{claim}
\noindent {\it Proof of Claim \ref{claim1}.}
By Lemma \ref{pole}.$(1)$ there exists an open dense subset $W\subset U_{\mathbb{K},n}(\epsilon )$ such that
\[
W\subset \{  k\in U_{\mathbb{K},n}(\epsilon ): \phi ( k)\in \mathbb{K}^m \}
\]
and $^a\phi :W\rightarrow \mathbb{K}^m:  k\mapsto \phi ( k)$ is analytic.
Let 
\[
U:=\{  k\in W: P(X_1,\ldots ,X_m,\phi ( k);Y) \neq 0\}. 
\]
Since $W$ is an open dense subset of $U_{\mathbb{K},n}(\epsilon )$, to prove the claim it is enough to show that 
$W\setminus U$ is closed and nowhere dense in $W$.
Clearly $W\setminus U$ is closed in $W$ since $^a\phi$ is continuous in $W$.
For the density, we note that if $W\setminus U$ contains an open subset of $W$ then
\[
\{  k \in U_{\mathbb{K},n}(\epsilon ): P(\phi ( u),\phi ( k);Y)\in M_{\mathbb{K},n+1} \text{ and } 
P(\phi ( u),\phi ( k);Y)=0\}
\]
contains an open subset of $U_{\mathbb{K},n}(\epsilon )$ and therefore $P(\phi _{(u,v)};Y)=0$ by Lemma \ref{pole}.$(3)$.
This finishes the proof of Claim \ref{claim1}.

\begin{claim}\label{claim2} For each $ k \in U_{\mathbb{K},n}(\epsilon )$ there exists 
$Q_{ k}\in \mathbb{K}[X_1,\ldots ,X_m][Y]$ such that $Q_{ k} (\phi ( u);Y)\neq 0$, 
$Q_{ k} (\phi ( u); f ( u, k))= 0$ and $Q_{ k}$ is a sum of monomials of the form
\[
a \, X_1^{\mu _1}\ldots X_m^{\mu _m}Y ^j,\quad a\in \mathbb{K},\ \mu _1,\ldots ,\mu _m\leq N,\ j\leq N.
\]
\end{claim}
\noindent {\it Proof of Claim \ref{claim2}.}
We follow the proof of \cite[Chap. IX. \textsection 5. Theorem 5]{Bochner_Martin}.
For each $k\in W$, where $W$ is as in the proof of Claim \ref{claim1}, let
\[
P_{ k}(X_1,\ldots ,X_m;Y)=\sum _{j,\mu \leq N} d_{j,\mu , k} \ X_1^{\mu _1}\ldots X_m^{\mu _m}Y ^j
\]
denote the polynomial $P(X_1,\ldots ,X_m,\phi ( k);Y)$.
By Claim \ref{claim1} there exists an open dense subset $U$ of $U_{\mathbb{K},n}(\epsilon )$ where $P_{ k}\neq 0$ 
for all $ k\in U$.
For each $k\in U$, we define
\[
E(P_{ k}):= \sum _{j,\mu \leq N} \| d_{j,\mu ,k} \| ^2.
\]
We note that $E(P_{k})>0$ for all $ k\in U$.
For each $k\in U$, let 
\[
Q_{k}(X_1,\ldots ,X_m;Y):=\sum _{j,\mu \leq N} b_{j,\mu , k} \ X_1^{\mu _1}\ldots X_m^{\mu _m}Y ^j,
\]
where
\[
b_{j,\mu ,k}:=\frac{d_{j,\mu , k}}{\sqrt{E(P_{ k})}}. 
\]
So, we have, for each $k\in U$, $Q_{k}(\phi (u) ; Y)\neq 0$, $Q_{k}(\phi (u) ; f(u,k))=0$ and $E(Q_{k})=1$.
We define
\[
\vec{v}( k):=(b_{j,\mu , k})_{j,\mu \leq N}\in \{ z\in \mathbb{K}^{(N+1)^{(m+1)}}: \|z\|=1 \}.
\]
Take $ k\in U_{\mathbb{K},n}(\epsilon)\setminus U$.
Since $U$ is an open dense subset of $U_{\mathbb{K},n}(\epsilon )$, there exists a Cauchy sequence 
$\{k_r\}_{r \in \mathbb{N}}\subset U$ that converges to $ k$.
For each $k_r$, the identity $Q_{ k_r}(\phi ( u) ;f( u, k_r))=0$ holds, therefore
\[
\sum _{j,\mu \leq N} b_{j,\mu , k _r}\phi _1( u) ^{\mu _1}\ldots \phi _m( u)^{\mu _m} f ( u, k _r)^j=0.
\]
By hypothesis there are $\alpha ,\beta \in A_{\mathbb{K},2n}$, $\beta \neq 0$, convergent in 
$\{( a, b)\in \mathbb{K}^{2n}:  a+ b\in U_n(\epsilon )\}$, such that $f( u, v) =\frac{\alpha ( u, v)}{\beta ( u, v)}$.
In particular
\[
\sum _{j,\mu \leq N} b_{j,\mu , k _r}\phi _1( u) ^{\mu _1}\ldots \phi _m( u)^{\mu _m} 
\alpha ( u, k _r)^j\beta ( u, k _r)^{N-j}=0.\tag{$\ast $}\label{main}
\]
Since $\{ z\in \mathbb{K}^{(N+1)^{(m+1)}}: \|  z\|=1 \}$ is compact, taking an adequate subsequence we can assume that the limit
of the sequence $\{\vec{v}( k_r)\}_{r\in \mathbb{N}}$ exists.
For each $j,\mu \leq N$ we define
\[
b_{j,\mu , k }:=\lim _{s \rightarrow \infty} \ b_{j,\mu , k_r}.
\]
Since $\alpha$ and $\beta$ is continuous, when $r$ tends to infinity equation (\ref{main}) becomes
\[
\sum _{j,\mu \leq N} b_{j,\mu , k}\phi _1( u) ^{\mu _1}\ldots \phi _m( u)^{\mu _m} 
\alpha ( u, k)^j\beta ( u, k)^{N-j}=0.
\]
So dividing by $\beta (u,k)^N$, we also have
\[
\sum _{j,\mu \leq N} b_{j,\mu , k}\phi _1(u) ^{\mu _1}\ldots \phi _m(u)^{\mu _m}  f( u, k)^j=0
\]
and hence the polynomial
\[
Q_k(X_1,\ldots ,X_{m+1};Y):= \sum _{j,\mu \leq N} b_{j,\mu , k}X_1 ^{\mu _1}\ldots X_m^{\mu _m} Y^j
\]
satisfies $Q_k(\phi ( u) ,f( u, k))=0$.
We note that $E(Q_k)=\lim _{r\rightarrow \infty} E(Q_{ k_r}) =1$, so $Q_k\neq 0$.
Since $\phi _1,\ldots ,\phi _m$ are algebraically independent over $\mathbb{K}$ and $Q_k(X_1,\ldots ,X_m,Y)\neq 0$ then 
$Q_k(\phi ( u), Y)\neq 0$.
This finishes the proof of Claim \ref{claim2}.

\smallskip

Claim \ref{claim2} implies that $f( u, k)$ is algebraic over $\mathbb{K}(\phi ( u))$ for all 
$ k\in U_{\mathbb{K},n}(\epsilon )$.
It remains to check the conditions on $N$ and on the minimal polynomials.
Fix $k\in U_{\mathbb{K},n}(\epsilon )$.
Let $A(Y):=Q_{ k}(\phi ( u);Y)$, where $Q_{ k}$ is the polynomial of Claim \ref{claim2}.
By definition of $Q_{k}$, in the proof of Claim \ref{claim2}, we have
\[
A(Y)=A_dY^d+\sum _{j=0}^{d-1}A_jY^j,\quad A_0,\ldots ,A_d\in \mathbb{K}[\phi ( u)],\ A_d\neq 0,\ d\leq N
\]
where each of $A_0$,\ldots ,$A_d$ is a sum of monomials of the form
\[
a\, \phi _1( u)^{\mu _1}\ldots \phi _m( u)^{\mu _m},\quad a\in \mathbb{K},\ 0\leq \mu _1,\ldots \mu _m\leq N.
\]
Let 
\[
B(Y)=Y^e+\frac{\sum _{j=0}^{e-1}B_jY^j}{B_e},\quad B_0,\ldots ,B_e\in \mathbb{K}[\phi ( u)]
\]
be the minimal polynomial of $f( u, k)$ over $\mathbb{K}(\phi ( u))$.
Since $f( u, k)$ is both a root of $A(Y)$ and of $B(Y)$, there exists 
\[
C(Y)=C_\ell Y^\ell+\sum _{j=0}^{\ell-1}C_jY^j,\quad C_0,\ldots ,C_\ell\in \mathbb{K}[\phi ( u)]
\]
such that $A(Y)=B(Y)\,C(Y)$.
Therefore 
\[
B_e\, \left(A_dY^d+\sum _{j=0}^{d-1} A_jY^j\right) = \left(B_eY^e+\sum _{j=0}^{e-1} B_jY^j\right) 
\left(C_\ell Y^\ell+\sum _{j=0}^{\ell -1} C_jY^j\right).
\]
We note that $\mathbb{K}[\phi ( u)]\cong \mathbb{K}[X_1,\ldots ,X_m]$ because $\phi _1,\ldots ,\phi _m$ are algebraically 
independent over $\mathbb{K}$.
Since $\mathbb{K}[\phi ( u)][Y]$ is an UFD and $B(Y)$ is irreducible,
\[
B_eY^e+\sum _{j=0}^{e-1} B_jY^j  \text{  divides  } A_dY^d+\sum _{j=0}^{d-1} A_jY^j \text{  in  } \mathbb{K}[\phi ( u)][Y].
\]
This implies that each of $B_0$,\ldots ,$B_e$ is a sum of monomials of the form
\[
a\, \phi _1( u)^{\mu _1}\ldots \phi _m( u)^{\mu _m},\quad a\in \mathbb{K},\ 0\leq \mu _1,\ldots \mu _m\leq N.
\]
This proves the statement for $f(u,k)$.
Since $k$ was fixed, we are done.
\end{proof}

In Proposition \ref{S-BM-alg} we will adapt to our setting a well known result that says that a complex analytic function
$f(u,v)$ that is rational in $u$, for each fixed $v$, and rational in $v$, for each fixed $u$, belongs to $\mathbb{C}(u,v)$; 
see \cite[Chap. IX. \textsection 5. Theorem 5]{Bochner_Martin} for details.
We begin with a lemma based on \cite[Chap. 5. \textsection 13. Theorem 1]{Siegel} that we include for completeness.

\begin{lemma}\label{S-BM} Let $\Psi :=(\psi _0,\ldots ,\psi _n)\in M_{\mathbb{K},n}^{n+1}$ be convergent in $U_n(\epsilon)$. 
Let $\psi _1,\ldots ,\psi _n$ be algebraically independent over $\mathbb{K}$ and let $\psi _0$ be algebraic over 
$\mathbb{K}(\psi _1,\ldots ,\psi _n)$. 
Let $\psi :=(\psi _1,\ldots ,\psi _n)$ and let $h$ be the degree of the minimal polynomial of $\psi _0$ over $\mathbb{K}(\psi )$.
Let $f(u,v)\ \in M_{\mathbb{K},2n}$ be convergent in $U_{2n}(\epsilon )$.
If there exists $N\in \mathbb{N}$ such that for each $k\in U_{\mathbb{K},n}(\epsilon )$ we have that 
$f(k,v)\in \mathbb{K}(\Psi (v))$ and there are $R_i,S_i \in \mathbb{K}[X_1,\ldots ,X_n]^{\leq N}$, $S_i\neq 0$, such that
\[
f(u,k) = \sum _{i=0}^{h-1} \frac{R_i(\psi (u))}{S_i(\psi (u))}\psi _0(u)^i
\] then $f(u,v)\in \mathbb{K}(\Psi _{(u,v)})$.
\end{lemma}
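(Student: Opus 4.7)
The plan is to reduce the problem to a finite-dimensional linear system with coefficients in $\mathbb{K}(\Psi(k))$ and then apply Cramer's rule to produce a representation of $f(u,k)$ whose coefficients depend rationally on $\Psi(k)$.

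First, I would clear denominators in the given representation. For each $k \in U_{\mathbb{K},n}(\epsilon)$, setting $D_k := \prod_{i=0}^{h-1} S_{i,k}$ and $T_{i,k} := R_{i,k}\prod_{j\neq i} S_{j,k}$ produces polynomials in $\mathbb{K}[X_1,\ldots,X_n]$ of degree at most $N' := hN$ in each variable, with $D_k \neq 0$, satisfying
\[
f(u,k)\,D_k(\psi(u)) = \sum_{i=0}^{h-1} T_{i,k}(\psi(u))\,\psi_0(u)^i \qquad \text{in } M_{\mathbb{K},n}.
\]
The coefficients of the tuple $(T_{0,k},\ldots,T_{h-1,k},D_k)$ form a non-zero vector in a finite-dimensional $\mathbb{K}$-vector space $V$ of dimension $d$ depending only on $N,h,n$.

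Next, I encode the identity as a linear system. Picking test points $u_1,\ldots,u_L \in U_{\mathbb{K},n}(\epsilon)$, evaluation of the displayed identity at each $u_j$ yields a linear equation on the $d$ coefficient-unknowns, whose coefficients belong to $\mathbb{K}(f(u_j,k)) \subseteq \mathbb{K}(\Psi(k))$ by the hypothesis $f(k,v) \in \mathbb{K}(\Psi(v))$ (read with $u_j$ as the first argument). Let $A(k)$ be the resulting $L \times d$ matrix with entries in $\mathbb{K}(\Psi(k))$, and let $V_{\mathrm{true}}(k) \subseteq V$ denote the non-empty $\mathbb{K}$-subspace of $(T,D)$ for which the displayed identity holds identically in $u$; certainly $V_{\mathrm{true}}(k) \subseteq \ker A(k)$.

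For $L$ large enough and the $u_j$ sufficiently generic, one has $\ker A(k) = V_{\mathrm{true}}(k)$. Indeed, if $(T,D) \notin V_{\mathrm{true}}(k)$ then $u \mapsto \sum T_i(\psi(u))\psi_0(u)^i - D(\psi(u))f(u,k)$ is a non-zero meromorphic function, so generic test points eventually detect it; since the chain $\{\ker A(k)\}_L$ decreases in the finite-dimensional space $V$, it stabilizes at $V_{\mathrm{true}}(k)$. Cramer's rule applied to a maximal non-singular submatrix of $A(k)$ then yields a non-trivial $(\tilde T_{i,k},\tilde D_k) \in V_{\mathrm{true}}(k)$ whose coefficients are rational functions of $\Psi(k)$; any such non-trivial tuple must have $\tilde D_k \neq 0$, since otherwise the basis property of $\{1,\psi_0,\ldots,\psi_0^{h-1}\}$ over $\mathbb{K}(\psi)$ together with the algebraic independence of $\psi_1,\ldots,\psi_n$ would force all $\tilde T_{i,k}$ to vanish as well.

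Since $(\tilde T_{i,k},\tilde D_k) \in V_{\mathrm{true}}(k)$, we obtain
\[
f(u,k) = \frac{\sum_{i=0}^{h-1} \tilde T_{i,k}(\psi(u))\,\psi_0(u)^i}{\tilde D_k(\psi(u))} \in \mathbb{K}(\Psi(u),\Psi(k)).
\]
Viewing the entries of $\tilde T_{i,k},\tilde D_k$ as fixed rational expressions in $\Psi(k)$ and invoking the identity principle for meromorphic functions to replace $k$ by the formal variable $v$, we conclude $f(u,v) \in \mathbb{K}(\Psi(u),\Psi(v)) = \mathbb{K}(\Psi_{(u,v)})$. The delicate point is the stabilization $\ker A(k) = V_{\mathrm{true}}(k)$, which crucially uses both the uniform degree bound $N$ (to keep $V$ finite-dimensional) and the hypothesis $f(k,v) \in \mathbb{K}(\Psi(v))$ (to keep the matrix entries in the single field $\mathbb{K}(\Psi(k))$).
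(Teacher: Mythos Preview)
Your approach and the paper's are close in spirit: both set up a finite list of monomials in $\psi(u),\psi_0(u),f(u,v)$, observe that the bounded-degree hypothesis forces a linear dependence among them for every fixed second argument, and then use a Cramer-type extraction together with the hypothesis $f(u_j,v)\in\mathbb{K}(\Psi(v))$ to produce coefficients in $\mathbb{K}(\Psi(v))$. The difference is in how the extraction is organised, and your organisation leaves a real gap.

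The problem is the stabilisation step. Your justification ``if $(T,D)\notin V_{\mathrm{true}}(k)$ then generic test points detect it, and the chain $\{\ker A(k)\}_L$ stabilises at $V_{\mathrm{true}}(k)$'' is a \emph{pointwise} argument: for each fixed $k$ it produces \emph{some} $L$ and \emph{some} $u_1,\ldots,u_L$ (depending on $k$) with $\ker A(k)=V_{\mathrm{true}}(k)$. But the rest of your argument needs a \emph{single} choice of $u_1,\ldots,u_L$ so that the Cramer solution is a fixed rational expression in $\Psi(k)$, which you can then lift to the variable $v$ by the identity principle. With $k$-dependent test points the Cramer formula changes with $k$, and the phrase ``viewing the entries of $\tilde T_{i,k},\tilde D_k$ as fixed rational expressions in $\Psi(k)$'' is not justified. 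If instead you fix the $u_j$ once and for all, then Cramer hands you an element of $\ker A(k)$, but nothing in your argument shows this element lies in $V_{\mathrm{true}}(k)$: knowing a relation at finitely many $u_j$ does not force it at all $u$.

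The paper sidesteps exactly this difficulty by keeping the test points as \emph{variables}. It forms the $m\times m$ matrix $H(v,u_{(1)},\ldots,u_{(m)})$ with entries $H_j(u_{(i)},v)$, observes (via the pointwise dependence and Lemma~\ref{pole}.(3)) that its determinant vanishes \emph{identically} as a meromorphic function of $(v,u_{(1)},\ldots,u_{(m)})$, and then expands along the last column. Because $u_{(m)}$ is still a variable at that stage, the resulting relation $\sum_j\chi_j(v,u_{(*)})H_j(u_{(m)},v)=0$ holds for \emph{all} $u_{(m)}$; only afterwards are $u_{(1)},\ldots,u_{(m-1)}$ specialised to constants $a$ chosen so that the $\chi_j(v,a)$ are not all zero and lie in $\mathbb{K}(\Psi(v))$. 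This order of operations --- derive the relation with $u$ a variable, specialise the auxiliary points last --- is precisely what guarantees the relation holds identically in $u$, and it is what your argument is missing.
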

\begin{proof}
We denote by $H_1( u, v),\ldots ,H_m( u, v)$ the monomials 
\[
\psi _1( u)^{\alpha _1}\ldots \psi _n( u)^{\alpha _n}f( u, v), 
\quad \psi _0( u)^{\alpha _0}\psi _1( u)^{\alpha _1}\ldots \psi _n( u)^{\alpha _n},
\]
with $0\leq \alpha _0\leq h-1$ and $0\leq \alpha _1,\ldots ,\alpha _n \leq hN$ (so $m:=(h+1)(hN+1)^n$).

Firstly we show that if there exists an equation of the form
\[
\xi _1 ( v)H_1( u, v)+\ldots +\xi _m( v)H_m( u, v)=0, \tag{$1$}\label{E2}
\]
where $\xi _1( v),\ldots ,\xi _m( v)\in \mathbb{K}(\Psi ( v))$ and not all of them are $0$ then we are done.
We may assume that there exists $\ell<m$ such that $f( u, v)$ appears in the monomial $H_i( u, v)$ if and only if $i\leq \ell$.
If $\xi _i( v)\neq 0$ for some $i\leq \ell$ then we are done.
Indeed, since $\psi _1,\ldots ,\psi _n$ are algebraically independent over $\mathbb{K}$, we can solve equation (\ref{E2}) 
with respect to $f( u, v)$ to deduce that $f( u, v)\in \mathbb{K}(\Psi _{(u,v)})$.
So it is enough to show that $\xi _i( v)\neq 0$ for some $i\leq \ell$.
Suppose for a contradiction that $\xi _i( v)=0$ for all $i\leq \ell$. 
Since not all $\xi _1( v),\ldots ,\xi _m( v)$ are $0$ we may assume that $\xi _{\ell+1}( v)\neq 0$.
By Lemma \ref{pole}.$(1)$ there exists $ k\in U_{\mathbb{K},n}(\epsilon )$ such that 
$\xi _1( k),\ldots ,\xi _m( k)\in \mathbb{K}$ and $\xi _{\ell+1}( k)\neq 0$.
We note that $H_{\ell+1}(u,v),\ldots ,H_m(u,v) \in \mathbb{K}(\Psi (u))$.
Since they do not depend on $v$, we denote them by $H_i(u)$.
Evaluating equation (\ref{E2}) at $v=k$ we obtain that
\[
\xi _{\ell+1} ( k)H_{\ell+1}( u)+\ldots +\xi _m( k)H_m( u)=0
\]
where $\xi _{\ell+1}( k)\neq 0$.
Since the degree of each $H_i( u)$ in the variable $\psi _0( u)$ is smaller than that of the minimal polynomial of 
$\psi _0$
over $\mathbb{K}(\psi )$, we must have $\xi _{\ell+1}( k)=\ldots =\xi _m( k)=0$, a contradiction.

We now show how to obtain equation (\ref{E2}).
If $f(u,k)=0$ for each $k\in U_{\mathbb{K},n}(\epsilon )$ then $f(u,v)=0$ by Lemma \ref{pole}.$(3)$ and 
there is nothing to prove.
So we may assume that there exists $ k \in U_{\mathbb{K},n}(\epsilon )$ such that $f( u, k)\neq 0$.
By hypothesis for this $k$ there are $R_i,S_i \in \mathbb{K}[X_1,\ldots ,X_n]^{\leq N}$, $S_i\neq 0$, such that
\[
f( u, k) = \sum _{i=0}^{h-1} \frac{R_i(\psi ( u))}{S_i(\psi ( u))}\psi _0( u)^i.
\]
Clearing denominators we get 
\[
S(\psi ( u))f( u, k) = \sum _{i=0}^{h-1} R'_i(\psi ( u))\psi _0( u)^i\tag{$2$}\label{E1}
\]
where $R'_i,S\in \mathbb{K}[X_1,\ldots ,X_n]^{\leq hN}$ and $S\neq 0$.
We also recall that $h$ and $N$ do not depend on $k$.
Now we follow \cite[Chap. IX. \textsection 5. Lemma 6]{Bochner_Martin}.
Let $u_{(1)},\ldots , u_{(m)}$ be independent $n$-tuples of variables and let $D( v, u_{(1)},\ldots , u_{(m)})$ be 
the determinant of 
\[
H( v, u_{(1)},\ldots , u_{(m)}):=
\left[\begin{matrix}
H_1( u_{(1)}, v) & H_1( u_{(2)}, v) & \ldots & H_1( u_{(m)}, v)\\
H_2( u_{(1)}, v) & H_2( u_{(2)}, v) & \ldots & H_2( u_{(m)}, v)\\
\vdots & \vdots & \ddots & \vdots \\
H_m( u_{(1)}, v) & H_m( u_{(2)}, v) & \ldots & H_m( u_{(m)}, v)\\
\end{matrix}\right].
\]
By equation (\ref{E1}), for each $k\in U_{\mathbb{K},n}(\epsilon )$ the monomials 
$H_1(u,k)$,\ldots ,$H_m(u,k)$ are linearly dependent over $\mathbb{K}$.
Since
\[
\{  k\in U_{\mathbb{K},n}(\epsilon ): D( k, u_{(1)},\ldots , u_{(m)})\in M_{\mathbb{K},mn} \text{ and } 
D( k, u_{(1)},\ldots , u_{(m)})=0\}
\]
is $U_{\mathbb{K},n}(\epsilon )$, $D=0$ by Lemma \ref{pole}.$(3)$.
Expanding the determinant of $H$ with respect to its last column, replacing $ u_{(m)}$ by $ u$ and denoting 
$(u_{(1)},\ldots ,u_{(m-1)})$ by $u_{(*)}$, we obtain a new equation of the form
\[
\chi _1 (v,u_{(*)})H_1(u,v)+\ldots +\chi _m(v,u_{(*)})H_m(u,v)=0,
\]
where
\[
\chi _1(v,u_{(*)}),\ldots ,\chi _m(v,u_{(*)})\in 
\mathbb{K}\big(H_j(u_{(i)},v): 1\leq i\leq m-1,\ 1\leq j\leq m\big).
\]
Without loss of generality we may assume that not all the $\chi _1,\ldots ,\chi _m$ are $0$.
Indeed there is a minor of $D$ of order $\nu \in (0,m)$ that is not zero and thus we can assume that $\nu =m-1$.
Now, fix $i\in \{1,\ldots ,m\}$ such that $\chi _i( v, u_{(*)})\neq 0$.
Then by Lemma \ref{pole}.$(2)$ there exists $ a:=( a_{(1)},\ldots , a_{(m-1)}) \in U_{\mathbb{K},(m-1)n}(\epsilon )$ 
such that
\[
\chi _1 ( v, a),\ldots ,\chi _m( v, a) \in M_{\mathbb{K},n} \text{ and } \chi _i ( v, a)\neq 0.
\]
We note that by hypothesis $f( a_{(1)}, v),\ldots ,f( a_{(m-1)}, v)\in \mathbb{K}(\Psi ( v))$,
therefore $\chi _1( v, a),\ldots ,\chi _m( v, a)\in \mathbb{K}(\Psi ( v))$. 
Since $\chi _i( v, a)\neq 0$, evaluating $ u_{(*)}$ at $ a$ we obtain an equation as in (\ref{E2}).
This concludes the proof.
\end{proof}

With the previous lemmas we can follow the proof of \cite[Chap. 5. \textsection 13. Theorem 1]{Siegel} and apply it to our 
context.

\begin{proposition}\label{S-BM-alg} Let $\Psi :=(\psi _0,\ldots ,\psi _n)\in M_{\mathbb{K},n}^{n+1}$ be convergent in 
$U_n(\epsilon)$.
Let $\psi _1,\ldots ,\psi _n$ be algebraically independent over $\mathbb{K}$ and let $\psi _0$ be algebraic over 
$\mathbb{K}(\psi _1,\ldots ,\psi _n)$. 
Let $f(u,v)\ \in M_{\mathbb{K},2n}$ be convergent in $U_{2n}(\epsilon )$ and algebraic over 
$\mathbb{K}(\Psi _{(u,v)})$.
If for each $k\in U_{\mathbb{K},n}(\epsilon )$ both $f(u,k)\in \mathbb{K}(\Psi (u))$ and $f(k,v)\in \mathbb{K}(\Psi (v))$ 
then $f(u,v)\in \mathbb{K}(\Psi _{(u,v)})$.
\end{proposition}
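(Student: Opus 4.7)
The plan is to reduce to Lemma \ref{S-BM}. Writing $\psi:=(\psi_1,\ldots,\psi_n)$ and letting $h$ be the degree of the minimal polynomial of $\psi_0$ over $\mathbb{K}(\psi)$, the only hypothesis of Lemma \ref{S-BM} not already present is the existence of a uniform $N\in\mathbb{N}$ such that, for every $k\in U_{\mathbb{K},n}(\epsilon)$,
\[
f(u,k)=\sum_{i=0}^{h-1}\frac{R_i^{(k)}(\psi(u))}{S_i^{(k)}(\psi(u))}\psi_0(u)^i
\]
with $R_i^{(k)},S_i^{(k)}\in\mathbb{K}[X_1,\ldots,X_n]^{\le N}$. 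The bulk of the work is to manufacture this uniform bound using Lemma \ref{core}.

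First, since $\psi_0$ is algebraic over $\mathbb{K}(\psi)$, the extension $\mathbb{K}(\Psi_{(u,v)})/\mathbb{K}(\psi_{(u,v)})$ is algebraic, and so for every $j\in\{0,\ldots,h-1\}$ the function $f(u,v)\psi_0(u)^j$ lies in $M_{\mathbb{K},2n}$, is convergent in $U_{2n}(\epsilon)$, and is algebraic over $\mathbb{K}(\psi_{(u,v)})$. Applying Lemma \ref{core} with $\phi=\psi$ to each of these $h$ functions yields a common $N_0\in\mathbb{N}$ such that, for every $k$ and every $j$, the minimal polynomial of $f(u,k)\psi_0(u)^j$ over $\mathbb{K}(\psi(u))$ has degree $m_{k,j}\le N_0$ and coefficients of the form $R/S$ with $R,S\in\mathbb{K}[X_1,\ldots,X_n]^{\le N_0}$. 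In particular, the trace
\[
T_j^{(k)}:=\mathrm{Tr}_{\mathbb{K}(\Psi(u))/\mathbb{K}(\psi(u))}\bigl(f(u,k)\psi_0(u)^j\bigr)\in\mathbb{K}(\psi(u)),
\]
equal to $-(h/m_{k,j})$ times the subleading coefficient of that minimal polynomial (with $h/m_{k,j}$ a positive integer at most $h$), is also of uniformly bounded degree in $k$.

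Next, write uniquely $f(u,k)=\sum_{i=0}^{h-1}c_i^{(k)}\psi_0(u)^i$ with $c_i^{(k)}\in\mathbb{K}(\psi(u))$, and let $\xi_1=\psi_0(u),\xi_2,\ldots,\xi_h$ be the conjugates of $\psi_0(u)$ in the Galois closure of $\mathbb{K}(\Psi(u))/\mathbb{K}(\psi(u))$ (separable, since $\mathrm{char}\,\mathbb{K}=0$). Applying the embeddings $\sigma_m$ that send $\psi_0(u)\mapsto\xi_m$, one computes
\[
T_j^{(k)}=\sum_{m=1}^{h}\sigma_m(f(u,k))\,\xi_m^j=\sum_{i=0}^{h-1}c_i^{(k)}\,p_{i+j},
\]
where $p_s:=\sum_{m=1}^{h}\xi_m^s\in\mathbb{K}(\psi(u))$ is expressible by Newton's identities in terms of the coefficients of the minimal polynomial of $\psi_0$, and hence is independent of $k$. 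The Hankel matrix $M:=(p_{i+j})_{0\le i,j\le h-1}$ has determinant equal, up to sign, to the discriminant of $\psi_0$'s minimal polynomial, a nonzero fixed element of $\mathbb{K}(\psi(u))$. Inverting, $c_i^{(k)}=\sum_{j}(M^{-1})_{ij}T_j^{(k)}$ is a fixed $\mathbb{K}(\psi(u))$-linear combination of the $T_j^{(k)}$'s, and so it too is of uniformly bounded degree in $k$. This supplies the required $N$, and Lemma \ref{S-BM} concludes that $f(u,v)\in\mathbb{K}(\Psi_{(u,v)})$. The main obstacle is precisely this passage from bounded minimal polynomials of the products $f(u,k)\psi_0(u)^j$ to bounded $\psi_0$-coefficients of $f(u,k)$; the trace/Hankel-inversion device is the key technical ingredient that makes it go through.
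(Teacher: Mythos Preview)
Your proof is correct and follows essentially the same route as the paper's. Both arguments reduce to Lemma~\ref{S-BM} by applying Lemma~\ref{core} to the products $f(u,v)\psi_0(u)^j$, extracting the traces $\sigma(f(u,k)\psi_0^j)$ from the subleading coefficients of the resulting minimal polynomials, and then recovering the $\psi_0$-coefficients of $f(u,k)$ by inverting the matrix of power sums $(p_{i+j})$; the paper writes this matrix as $LL^t$ for the Vandermonde $L$ in the conjugates $\xi_1,\ldots,\xi_h$, while you call it a Hankel matrix and invoke Newton's identities, but it is the same object and the same inversion step.
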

\begin{proof}
Let $\psi :=(\psi _1,\ldots ,\psi _n)$.
Let
\[
P(X)=X^h+P_1X^{h-1}+\ldots +P_h\in \mathbb{K}(\psi )[X].
\]
be the minimal polynomial of $\psi _0$ over $\mathbb{K}(\psi )$.
We note that $\mathbb{K}(\Psi )$ is isomorphic to $\mathbb{K}(X_1,\ldots ,X_n)[X]/(P(X))$.
For each $k\in U_{\mathbb{K},n}(\epsilon )$ let $f_{k}$ denote $f(u,k)$, then by hypothesis 
\[
f_{k} = S_{k,1}(\psi )\psi _0^{h-1}+S_{k,2}(\psi )\psi _0^{h-2}+\ldots +S_{k,h-1}(\psi )\psi _0
+S_{k,h}(\psi )
\]
for some $S_{ k,1},\ldots ,S_{ k,h}\in \mathbb{K}(X_1,\ldots ,X_n)$.
By Lemma \ref{S-BM} we only need to check that there exists $N\in \mathbb{N}$ such that for each 
$k\in U_{\mathbb{K},n}(\epsilon )$ each of $S_{k,1},\ldots ,S_{k,h}$ is the quotient of two polynomials in 
$\mathbb{K}[X_1,\ldots ,X_n]^{\leq N}$.

Fix $k\in U_{\mathbb{K},n}(\epsilon )$.
Let $\xi _1,\ldots ,\xi _h$ be the $h$ roots of $P(X)$.
For each $\alpha$ algebraic over $\mathbb{K}(\psi )$ let $\sigma (\alpha )$ denote its trace, hence 
$\sigma (\psi _0)=\xi _1+\ldots +\xi _h\in \mathbb{K}(\psi)$.
For each $i\in \{1,\ldots ,h\}$ we define
\[
f_{ k}^{(i)}:=S_{ k,1}(\psi )\xi _i^{h-1}+S_{ k,2}(\psi )\xi _i^{h-2}+\ldots 
+S_{ k,h-1}(\psi )\xi _i+S_{ k,h}(\psi ).
\]
Let
\[
L:=
\left[\begin{matrix}
\xi _1^{h-1} & \xi _2 ^{h-1} & \ldots & \xi _h ^{h-1}\\
\xi _1^{h-2} & \xi _2 ^{h-2} & \ldots & \xi _h ^{h-2}\\
\vdots & \vdots & \ddots & \vdots\\
\xi _1 & \xi _2 & \ldots & \xi _h\\
1 & 1 & \ldots & 1\\
\end{matrix}\right].
\]
An easy computation shows that $LL^t=\left[\begin{matrix} \sigma (\psi _0^{2h-i-j}) \end{matrix}\right]$
and so its coefficients belong to $\mathbb{K}(\psi )$.
Since $det(LL^t)=\prod _{1\leq i<j\leq h} (\xi _i-\xi _j)^2$ and $\mathbb{K}(\psi)$ is separable,
$LL^t$ is invertible.
We note that $\mathbb{K}(\psi )$ is isomorphic to $\mathbb{K}(X_1,\ldots ,X_n)$ because $\psi _1,\ldots ,\psi _n$ are algebraically 
independent over $\mathbb{K}$.
Hence in an abuse of notation we identify each $S_{k,i}$ with $S_{k,i}(\psi )$.
With this convention,
\[
\left[ \begin{matrix} f_{k}^{(1)},f_{k}^{(2)},\ldots ,f_{k}^{(h)}\end{matrix}\right]
=
\left[ \begin{matrix} S_{k,1},S_{k,2},\ldots ,S_{k,h}\end{matrix}\right] L
\]
and $\sigma (f_{k}\psi _0 ^j)=\sum _{i=1}^h f_{k}^{(i)}\xi _i ^j$ for each $j\in \mathbb{N}$, so
\[
\left[ \begin{matrix} S_{k,1},S_{k,2},\ldots ,S_{k,h}\end{matrix}\right]
=
\left[ \begin{matrix} \sigma (f_{k}\psi _0 ^{h-1}), \sigma (f_{k}\psi _0 ^{h-2}),\ldots ,\sigma (f_{k}\psi _0),
\sigma (f_{k})\end{matrix}\right] (LL^t)^{-1}.
\]
Since $L$ does not depend on $ k$, it is enough to show that there exists $N\in \mathbb{N}$ such that for each 
$ k\in U_{\mathbb{K},n}(\epsilon )$ each $\sigma (f_{ k}\psi _0 ^{h-1}),\ldots ,\sigma (f_{ k})$ can be written in the 
form $A(\psi )/B(\psi )$ for some $A,B \in \mathbb{K}[X_1,\ldots ,X_n]^{\leq N}$ and $B\neq 0$.

Now, we fix $j\in \{0,\ldots ,h-1\}$ and $ k\in U_{\mathbb{K},n}(\epsilon )$ and we check the statement above for 
$\sigma (f_{k}\psi _0 ^j)$.
Since $\psi _0$ is algebraic over $\mathbb{K}(\psi )$, by hypothesis both $f(u,v)$ and $\psi _0(u)$ are 
algebraic over $\mathbb{K}(\psi _{(u,v)})$.
Now we apply Lemma \ref{core} to $f(u,v)\psi _0(u) ^j$ to deduce that there exists $N\in \mathbb{N}$ such that 
for each $k\in U_{\mathbb{K},n}(\epsilon )$ the minimal polynomial of $f_{k}\psi _0 ^j$ over $\mathbb{K}(\psi )$ 
can be written in the form
\[
Y^h+\sum _{i=0}^{h-1} \frac{A_i(\psi )}{B_i(\psi )} Y^i
\]
for some $A_i,B_i \in \mathbb{K}[X_1,\ldots ,X_n]^{\leq N}$ and $B_i\neq 0$.
Finally, since $k$ was fixed and $\sigma (f_{ k}\psi _0 ^j)=-A_{h-1}(\psi )/B_{h-1}(\psi )$, we are done.
\end{proof}

\subsection{Some AAT results and the proof of Theorem \ref{T2}.}Next, we show additional properties for those elements of $M_{\mathbb{K},n}^n$ that admit an AAT.
We begin with a corollary of Lemma \ref{core}.
We shall use the notation introduced in Notation \ref{notation-AAT}.

\begin{corollary}\label{algebraic} Let $\phi \in M_{\mathbb{K},n}^n$ be convergent in $U_n(\epsilon )$.
 If $\phi$ admits an AAT then $\phi _{u+k}$ is algebraic over $\mathbb{K}(\phi  )$ for each 
 $ k\in U_{\mathbb{K},n}(\epsilon )$.
\end{corollary}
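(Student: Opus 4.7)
The plan is to deduce this as a direct application of Lemma \ref{core} to each coordinate of $\phi_{u+v}$. The AAT hypothesis already gives exactly the two ingredients that feed into that lemma: algebraic independence of $\phi_1,\ldots,\phi_n$ over $\mathbb{K}$, and algebraicity of each $\phi_i(u+v)$ over $\mathbb{K}(\phi_{(u,v)})$.

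More precisely, I would proceed as follows. Fix $i\in\{1,\ldots,n\}$ and set $f(u,v):=\phi_i(u+v)$, which I want to treat as an element of $M_{\mathbb{K},2n}$ so that Lemma \ref{core} applies. Writing $\phi_i=\alpha_i/\beta_i$ with $\alpha_i,\beta_i\in A_{\mathbb{K},n}$ convergent on $U_n(\epsilon)$ and $\beta_i\neq 0$, the numerator and denominator of $f(u,v)$ are $\alpha_i(u+v)$ and $\beta_i(u+v)$, which are analytic, and hence represented by convergent power series in $(u,v)$, on the open set $\{(a,b)\in\mathbb{C}^{2n}:a+b\in U_n(\epsilon)\}$, which contains a neighborhood of the origin. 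Since $\beta_i$ is a nonzero analytic function on the connected set $U_n(\epsilon)$, the identity principle ensures $\beta_i(u+v)\not\equiv 0$, so $f(u,v)\in M_{\mathbb{K},2n}$. The same identity-principle argument, applied to the translate $u\mapsto\beta_i(u+k)$ near $0$ (valid because any $k\in U_{\mathbb{K},n}(\epsilon)$ admits a neighborhood contained in $U_n(\epsilon)$), shows that $f(u,k)=\phi_i(u+k)\in M_{\mathbb{K},n}$ for every $k\in U_{\mathbb{K},n}(\epsilon)$.

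With these verifications in hand, the hypotheses of Lemma \ref{core} (taking the tuple $\phi=(\phi_1,\ldots,\phi_n)$) are all satisfied: algebraic independence comes from the AAT, and algebraicity of $f(u,v)$ over $\mathbb{K}(\phi_{(u,v)})$ is the defining property of the AAT. The lemma then yields that $\phi_i(u+k)$ is algebraic over $\mathbb{K}(\phi(u))$ for every $k\in U_{\mathbb{K},n}(\epsilon)$. Running this argument for each $i\in\{1,\ldots,n\}$ gives the statement for the full tuple $\phi_{u+k}$.

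There is essentially no obstacle beyond bookkeeping: the only care required is checking that the translated series $\phi_i(u+v)$ and its specialization $\phi_i(u+k)$ really live in the appropriate fields of fractions (i.e.\ the denominators do not vanish identically), which is handled by the identity principle as above. Everything else is an immediate invocation of Lemma \ref{core}.
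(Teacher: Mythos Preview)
Your proposal is correct and is exactly the approach taken in the paper: apply Lemma~\ref{core} to $f(u,v)=\phi_i(u+v)$ (the paper phrases it as $f(u,v):=\phi(u+v)$, implicitly componentwise). The paper's proof is a terse one-liner asserting that the hypotheses of Lemma~\ref{core} hold; your write-up simply fills in the verification that $\phi_i(u+v)\in M_{\mathbb{K},2n}$ and $\phi_i(u+k)\in M_{\mathbb{K},n}$ via the identity principle, which is exactly the bookkeeping the paper leaves implicit.
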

\begin{proof}
Let $f(u,v):=\phi (u+v)$.
Since $\phi$ admits an AAT and $\phi _{u+k}=f(u,k)\in M_{\mathbb{K},n}$ for all $ k\in U_n(\epsilon )$, we are under the hypothesis 
of Lemma \ref{core}.
\end{proof}

Although we will not use it, the proof of Lemma \ref{core} can be adapted to prove that if $\phi \in M_{\mathbb{K},n}^n$ admits 
an AAT then the formal derivative $\partial _{u_j} \phi _i$ is algebraic over $\mathbb{K}(\phi )$ for each $i,j\in \{1,\ldots ,n\}$.

\begin{lemma}\label{elimination}
Let $\phi ,\psi \in M_{\mathbb{K},n}^n$ and suppose that $\phi$ is algebraic over $\mathbb{K}(\psi )$.
If $\phi$ admits an AAT then $\psi$ admits an AAT.
The converse is also true provided $\phi_1,\ldots ,\phi _n$ are algebraically independent over $\mathbb{K}$.
\end{lemma}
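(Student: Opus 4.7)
The plan is to reduce both directions to transcendence-degree bookkeeping combined with the transitivity of algebraic extensions, exploiting the fact that algebraic relations in $M_{\mathbb{K},n}$ survive the substitution $u \mapsto u+v$ (which makes sense because substituting a tuple of power series vanishing at $0$ into a convergent power series still produces a convergent power series, and ring/polynomial identities are preserved under this substitution).

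For the forward direction, assume $\phi$ admits an AAT and is algebraic over $\mathbb{K}(\psi)$. First I would show $\psi_1,\ldots,\psi_n$ are algebraically independent over $\mathbb{K}$. Since $\phi_1,\ldots,\phi_n$ are algebraically independent (from the AAT), $\mathrm{tr.deg.}_{\mathbb{K}}\mathbb{K}(\phi)=n$; the algebraicity of $\phi$ over $\mathbb{K}(\psi)$ then gives
\[
n=\mathrm{tr.deg.}_{\mathbb{K}}\mathbb{K}(\phi)\le \mathrm{tr.deg.}_{\mathbb{K}}\mathbb{K}(\phi,\psi)=\mathrm{tr.deg.}_{\mathbb{K}}\mathbb{K}(\psi)\le n.
\]
Thus $\mathrm{tr.deg.}_{\mathbb{K}}\mathbb{K}(\psi)=n$ and, because $\mathbb{K}(\phi,\psi)$ is finitely generated over $\mathbb{K}(\phi)$ with the same transcendence degree, $\psi$ is in fact algebraic over $\mathbb{K}(\phi)$. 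Next, to verify the AAT condition for $\psi$, I would chain three algebraic extensions: (i) $\psi(u+v)$ is algebraic over $\mathbb{K}(\phi(u+v))$, obtained by substituting $u+v$ into a polynomial relation witnessing $\psi$ algebraic over $\mathbb{K}(\phi)$; (ii) $\phi(u+v)$ is algebraic over $\mathbb{K}(\phi_{(u,v)})$ by the AAT for $\phi$; (iii) $\phi_{(u,v)}$ is algebraic over $\mathbb{K}(\psi_{(u,v)})$, obtained by substituting separately $u$ and $v$ into the given algebraic relations of $\phi$ over $\mathbb{K}(\psi)$. Composing these shows $\psi(u+v)$ is algebraic over $\mathbb{K}(\psi_{(u,v)})$.

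For the converse, assume $\phi_1,\ldots,\phi_n$ are algebraically independent over $\mathbb{K}$, $\phi$ is algebraic over $\mathbb{K}(\psi)$, and $\psi$ admits an AAT. Exactly the same transcendence-degree argument (using algebraic independence of both $\phi$ and $\psi$, the latter coming from the AAT for $\psi$) yields $\psi$ algebraic over $\mathbb{K}(\phi)$. Then the analogous three-step chain shows $\phi(u+v)$ is algebraic over $\mathbb{K}(\phi_{(u,v)})$: $\phi(u+v)$ is algebraic over $\mathbb{K}(\psi(u+v))$ (substituting $u+v$ into a relation giving $\phi$ algebraic over $\mathbb{K}(\psi)$), which is algebraic over $\mathbb{K}(\psi_{(u,v)})$ by the AAT for $\psi$, which in turn is algebraic over $\mathbb{K}(\phi_{(u,v)})$ by substitution into relations giving $\psi$ algebraic over $\mathbb{K}(\phi)$.

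The only nontrivial points are the transcendence-degree passage from ``$\phi$ algebraic over $\mathbb{K}(\psi)$'' to the symmetric statement ``$\psi$ algebraic over $\mathbb{K}(\phi)$'' (which requires both sides to have transcendence degree $n$, hence the need for the extra hypothesis in the converse) and the verification that formal substitution of $u+v$ preserves algebraic relations. Both are routine once formulated, so I expect no real obstacle; the lemma is essentially a symmetry principle for AAT under mutual algebraic dependence.
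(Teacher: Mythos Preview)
Your proposal is correct and follows essentially the same approach as the paper: both argue that the mutual algebraicity of $\phi$ and $\psi$ (via transcendence-degree comparison) allows a three-step chain $\psi_{u+v}$ algebraic over $\mathbb{K}(\phi_{u+v})$ algebraic over $\mathbb{K}(\phi_{(u,v)})$ algebraic over $\mathbb{K}(\psi_{(u,v)})$, and then invoke symmetry for the converse. The paper is terser about the transcendence-degree step and the preservation of algebraic relations under the substitution $u\mapsto u+v$, but your more explicit account of these points is entirely in line with their argument.
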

\begin{proof}
Assume that $\phi$ admits an AAT, hence $\psi _1,\ldots ,\psi _n$ are algebraically independent over $\mathbb{K}$ because 
$\phi$ is algebraic over 
$\mathbb{K}(\psi )$.
To check that $\psi _{ u+ v}$ is algebraic over $\mathbb{K}(\psi _{(u,v)})$ it is enough to show that 
$\psi _{ u+ v}$ is algebraic over $\mathbb{K}(\phi _{ u+ v})$, $\phi _{ u+ v}$ is algebraic over 
$\mathbb{K}(\phi _{(u,v)})$ and $\phi _{(u,v)}$ is algebraic over $\mathbb{K}(\psi _{(u,v)})$.
The three conditions above are trivially satisfied because $\phi$ admits an AAT and both $\phi$ is algebraic over 
$\mathbb{K}(\psi)$ and $\psi$ is algebraic over $\mathbb{K}(\phi)$.

The converse follows by symmetry because if $\phi _1,\ldots ,\phi _n$ are algebraically independent over $\mathbb{K}$ then 
$\psi$ is algebraic over $\mathbb{K}(\phi )$.
\end{proof}

Now we adapt to our context a result of AAT due to H.A.Schwarz, see \cite[Chap. XXI. Art. 389]{Hancock} for details.

\begin{lemma}\label{Schwarz}
Let $\phi \in M_{\mathbb{K},n}^n$ be convergent in $U_n(\epsilon )$ and admitting an AAT.
Then there exist a finite subset $\mathcal{D}\subset U_{\mathbb{K},n}(\epsilon)$, 
$0\in \mathcal{D}$ and $\epsilon ' \in (0, \epsilon]$ such that each element of $\mathbb{K}(\phi _{u+d}:  d\in \mathcal{D})$ is convergent in $U_n(2\epsilon ')$,
and there exist $A_0,\ldots ,A_N\in \mathbb{K}(\phi _{(u+d,v+d)}:  d\in \mathcal{D})$
convergent in $U_{2n}(2\epsilon ')$ such that $\phi _{ u+ v}$ is algebraic over $\mathbb{K}(A_0,\ldots , A_N)$ 
and for each $\ell\in \{0,\ldots ,N\}$ 
\[
A_\ell ( u, v)=A_\ell ( u+ k, v- k)\text{ for all }  k\in U_{\mathbb{K},n}(\epsilon ').
\tag{$\dagger$}\label{dagger}
\]
\end{lemma}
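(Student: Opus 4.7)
The plan starts from the observation that a rational function of $(u,v)$ is invariant under every small shift $(u,v)\mapsto(u+k,v-k)$ (with $k$ in a neighbourhood of $0$) if and only if it depends only on $u+v$, since the infinitesimal generators of these shifts are the operators $\partial_{u_i}-\partial_{v_i}$. Thus the $A_\ell$ to be constructed must, as functions of $(u,v)$, depend only on the sum $u+v$. The most natural candidates are the coordinates $\phi_i(u+v)$ themselves, which are manifestly shift-invariant; the issue is to make them rational in a finite set of translates $\phi_{(u+d,v+d)}$.

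Concretely, I would first fix some finite symmetric $\mathcal{D}\subset U_{\mathbb{K},n}(\epsilon)$ containing $0$, and consider the field $F_\mathcal{D}:=\mathbb{K}(\phi_{(u+d,v+d)}:d\in\mathcal{D})$. By the AAT and Corollary~\ref{algebraic}, $F_\mathcal{D}$ is a finite algebraic extension of $\mathbb{K}(\phi(u),\phi(v))$, and each $\phi_i(u+v)$ is algebraic over $F_\mathcal{D}$; let $d_i(\mathcal{D})$ denote the degree of its minimal polynomial. Since $d_i(\mathcal{D})$ is a nonincreasing positive integer under enlargement of $\mathcal{D}$, it stabilizes at some finite $\mathcal{D}$, which I take simultaneously for all $i$ (and symmetric with $0\in\mathcal{D}$).

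Once the degrees are stabilized, I would take $A_0,\ldots,A_N$ to be the coefficients of the monic minimal polynomials of $\phi_1(u+v),\ldots,\phi_n(u+v)$ over $F_\mathcal{D}$. By construction these lie in $F_\mathcal{D}$, and $\phi_{u+v}$ is algebraic over $\mathbb{K}(A_0,\ldots,A_N)$. The required invariance $A_\ell(u,v)=A_\ell(u+k,v-k)$ is the heart of the matter: after the shift one obtains another monic polynomial annihilating $\phi_i(u+v)$ whose coefficients live in a slightly enlarged field; by stabilization of the degree the shifted polynomial must remain minimal, and uniqueness of the monic minimal polynomial over the common enlarged field forces the coefficients to coincide. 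Convergence of the $A_\ell$ in $U_{2n}(2\epsilon')$ follows by choosing $\epsilon'\le\epsilon/3$ and $\mathcal{D}\subset U_{\mathbb{K},n}(\epsilon/3)$, so that all relevant translates of $\phi$ remain within the original convergence domain.

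The main obstacle is precisely this rigidity under shifts: showing that enlarging $\mathcal{D}$ by shifts $\mathcal{D}\cup(\mathcal{D}\pm k)$ does not further reduce $d_i(\mathcal{D})$, so that the shifted and original minimal polynomials can legitimately be identified. Here the algebraic independence of $\phi_1,\ldots,\phi_n$ built into the AAT hypothesis will be essential, as it controls how minimal polynomials interact with the shift action.
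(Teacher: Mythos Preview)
Your approach is correct and is essentially the paper's argument viewed non-constructively. Both proofs take the $A_\ell$ to be the coefficients of the monic minimal polynomial of each $\phi_i(u+v)$ over $F_{\mathcal{D}}=\mathbb{K}(\phi_{(u+d,v+d)}:d\in\mathcal{D})$, and both establish the invariance $(\dagger)$ via uniqueness of this minimal polynomial. The paper phrases it as a descent: starting from $\mathcal{D}=\{0\}$, if $(\dagger)$ fails for some $k$, then
\[
Q(X)\;=\;P(u,v;X)-P(u+k,v-k;X)
\]
is a nonzero polynomial of strictly lower degree (the monic leading terms cancel) still annihilating $\phi_i(u+v)$, so the minimal polynomial over $F_{\mathcal{D}\cup(\mathcal{D}\pm k)}$ has strictly smaller degree; iterate with $k_r\in U_{\mathbb{K},n}(2^{-r}\epsilon)$ until $(\dagger)$ holds. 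Your version simply picks $\mathcal{D}$ realising the global minimum of $d_i(\mathcal{D})$ over all finite $\mathcal{D}\subset U_{\mathbb{K},n}(\epsilon)$ and invokes uniqueness directly.

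Two remarks. First, what you call the ``main obstacle'' dissolves immediately with your own setup: once $\mathcal{D}$ achieves the minimum of $d_i$ over \emph{all} finite subsets of $U_{\mathbb{K},n}(\epsilon)$, then for every $k$ with $\mathcal{D}\cup(\mathcal{D}\pm k)\subset U_{\mathbb{K},n}(\epsilon)$ minimality already forbids any further drop, so the shifted and original minimal polynomials over $F_{\mathcal{D}\cup(\mathcal{D}\pm k)}$ coincide by uniqueness. No extra input from the AAT beyond what you already used is needed here. Second, you cannot simply ``choose $\mathcal{D}\subset U_{\mathbb{K},n}(\epsilon/3)$'': the minimising $\mathcal{D}$ is whatever it is. The correct bookkeeping is to set $\epsilon'\le\tfrac12(\epsilon-\max_{d\in\mathcal{D}}\|d\|)$, which is positive since $\mathcal{D}$ is finite and contained in the open ball; this simultaneously guarantees the convergence of $\mathbb{K}(\phi_{u+d}:d\in\mathcal{D})$ in $U_n(2\epsilon')$, the convergence of the $A_\ell$ in $U_{2n}(2\epsilon')$, and that $\mathcal{D}\cup(\mathcal{D}\pm k)\subset U_{\mathbb{K},n}(\epsilon)$ for all $k\in U_{\mathbb{K},n}(\epsilon')$. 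The paper's halving scheme for the $k_r$ is just the constructive way of arranging the same thing.
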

\begin{proof}
Fix $i\in \{1,\ldots ,n\}$.
Let $\mathcal{S}_0:=\{ 0\}$ and $\mathbb{K}_0:=\mathbb{K}(\phi _{(u,v)} )$.
Let
\[
P_0(X)=X^{N_0+1}+\sum _{\ell=0}^{N_0} A_{0,\ell}( u, v) X^\ell
\]
be the minimal polynomial of $\phi _i( u+ v)$ over $\mathbb{K}_0$.
If each $A_{0,\ell}$ satisfies property (\ref{dagger}) for $\epsilon '=2^{-1}\epsilon$ then we are done for this $i$ letting 
$\epsilon ':=2^{-1}\epsilon$, $\mathcal{D}:=\mathcal{S}_0$ and $A_\ell:=A_{0,\ell}$ for each $0\leq \ell\leq N_0$.
Otherwise, there exists $k_1\in U_{\mathbb{K},n}(2^{-1}\epsilon )$ such that
\[
Q_0(X):= X^{N_0+1}+\sum _{\ell=0}^{N_0} A_{0,\ell}( u, v)X^\ell - 
X^{N_0+1}-\sum _{\ell=0}^{N_0} A_{0,\ell}(u+k_1,v-k_1)X^\ell 
\]
is not zero.
Since $u+v=(u+k_1)+(v-k_1)$, we deduce that $\phi _i( u+ v)$ is a root of $Q_0(X)$.
Let $\mathcal{S}_1:=\mathcal{S}_0\cup \{k_1, -k_1\}$ and 
$\mathbb{K}_1:=\mathbb{K}(\phi _{u+k,v+k}:  k\in \mathcal{S}_1)$.
By definition $\mathbb{K}_0\subset \mathbb{K}_1$.
Let 
\[
P_1(X)=X^{N_1+1}+\sum _{\ell=0}^{N_1} A_{1,\ell}(u,v) X^\ell
\]
be the minimal polynomial of $\phi _i( u+ v)$ over $\mathbb{K}_1$.
We note that the elements of $\mathbb{K}_1$ are convergent in $U_{2n}(2^{-1}\epsilon )$.
If each $A_{1,\ell}$ satisfies property (\ref{dagger}) for $\epsilon '=2^{-2}\epsilon$ then we are done for this $i$ letting 
$\epsilon ':=2^{-2}\epsilon$, $\mathcal{D}:=\mathcal{S}_1$ and $A_\ell:=A_{1,\ell}$ for each $0\leq \ell\leq N_1$.
Otherwise, we can repeat the process to obtain sets $\mathcal{S}_2$, $\mathcal{S}_3$ and so on where the set 
$\mathcal{S}_r$ is obtained from the set $\mathcal{S}_{r-1}$ as
\[
\mathcal{S}_r:=\mathcal{S}_{r-1}\cup \{ k + k_r: k\in \mathcal{S}_{r-1}\}\cup 
\{ k - k _r: k\in \mathcal{S}_{r-1}\}
\]
for some $k_r\in U_{\mathbb{K},n}(2 ^{-r}\epsilon )$ such that $Q_{r-1}$ is not $0$. Similarly, we obtain $\mathbb{K}_r:=\mathbb{K}(\phi_{u+k,v+k}:k\in \mathcal{S}_r)$ whose elements are convergent in $U_{2n}(2^{-r}\epsilon)$.
Since in the $r$ repetition the degree of $P_r$ is smaller than that of $P_{r-1}$, this process eventually stops, say at step $s$.
Letting $\epsilon ':=2^{-s-1}\epsilon$, $\mathcal{D}:=\mathcal{S}_s$ and $A_\ell:=A_{s,\ell}$ for each $0\leq \ell\leq N_s$, 
we are done for this $i$.
The elements $A_0,\ldots ,A_{N_s}$ are convergent in $U_{2n}(2\epsilon ')$ since they are elements of $\mathbb{K}_s$.

For each $i$ $(1\leq i \leq n)$ denote by $\epsilon '_i$, $\mathcal{D}_i$ and $A_{0}^{i},\ldots ,A_{N_i}^{i}$ the elements 
$\epsilon '$, $\mathcal{D}$ and $A_{1},\ldots ,A_N$ previously obtained for that choice of $i$.
To complete the proof, take $\mathcal{D}:=\cup _i \mathcal{D}_i$, $\epsilon ' :=\min _i \{ \epsilon '_i\}$, and let 
$\{A_0,\ldots ,A_N\}$ be the union of the sets $\{A_0^{i},\ldots ,A_{N_i}^{i}\}$.
\end{proof}

We need one more lemma before proving Theorem \ref{T2}.

\begin{lemma}\label{K(Psi)} Let $\phi \in M_{\mathbb{K},n}^n$ be convergent in $U_n(\epsilon )$ admitting an AAT.
Then there exist $\epsilon ''\in (0,\epsilon ]$ and $\Psi :=(\psi _0,\ldots ,\psi _n)\in M_{\mathbb{K},n}^{n+1}$ 
convergent in $U_n(\epsilon '')$ and algebraic over 
$\mathbb{K}(\phi )$ such that $\psi :=(\psi _1,\ldots ,\psi _n)$ admits an AAT, 
$\psi _0$ is algebraic over $\mathbb{K}(\psi )$ and for each $f \in \mathbb{K}(\Psi )$ there exists 
$\delta \in (0, \epsilon '']$ such that for each $k\in U_{\mathbb{K},n}(\delta )$, $f_{u+k}\in \mathbb{K}(\Psi)$ and 
$f_{u+k}$ is convergent in $U_n(\epsilon '')$.
\end{lemma}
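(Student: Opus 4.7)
The strategy is to apply Lemma \ref{Schwarz} to $\phi$ and extract $\Psi$ from the translation-invariant data it produces. First, apply Lemma \ref{Schwarz} to obtain a finite set $\mathcal{D}\subset U_{\mathbb{K},n}(\epsilon)$ with $0\in\mathcal{D}$, some $\epsilon'\in(0,\epsilon]$, and elements $A_0,\ldots,A_N\in\mathbb{K}(\phi_{u+d,v+d}: d\in\mathcal{D})$ convergent in $U_{2n}(2\epsilon')$ satisfying the translation invariance $A_\ell(u,v)=A_\ell(u+k,v-k)$ for $\|k\|<\epsilon'$, and with $\phi_{u+v}$ algebraic over $\mathbb{K}(A_0,\ldots,A_N)$. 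From the invariance one then infers that each $A_\ell$ depends only on $u+v$: setting $\tilde{A}_\ell(w):=A_\ell(w,0)$ gives $A_\ell(u,v)=\tilde{A}_\ell(u+v)$ with $\tilde{A}_\ell\in M_{\mathbb{K},n}$ convergent in $U_n(2\epsilon')$. Specialising $v=0$ in the algebraic relation shows $\phi$ is algebraic over $\mathbb{K}_1:=\mathbb{K}(\tilde{A}_0,\ldots,\tilde{A}_N)$, whence $\mathrm{trdeg}_{\mathbb{K}}\mathbb{K}_1=n$.

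Next, set $L:=\mathbb{K}(\phi_{u+d}: d\in\mathcal{D})$. Each generator is algebraic over $\mathbb{K}(\phi)$ by Corollary \ref{algebraic}, so $L/\mathbb{K}(\phi)$ is finite algebraic, and $\mathbb{K}_1\subseteq L$ because $\tilde{A}_\ell=A_\ell(u,0)$ lies in $\mathbb{K}(\phi_{u+d},\phi_d: d\in\mathcal{D})=L$. I would then extract from the tuple $(\tilde{A}_0,\ldots,\tilde{A}_N)$ a transcendence basis $\psi_1,\ldots,\psi_n$ of $\mathbb{K}_1$ over $\mathbb{K}$ (possible since $\mathrm{trdeg}_{\mathbb{K}}\mathbb{K}_1=n$), and apply the primitive element theorem (characteristic $0$) to the finite separable extension $L/\mathbb{K}(\psi)$ to obtain $\psi_0\in L$ with $L=\mathbb{K}(\psi)(\psi_0)=\mathbb{K}(\Psi)$. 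Take $\epsilon''\in(0,\epsilon']$ small enough that all $\psi_i$ are convergent in $U_n(\epsilon'')$.

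The required algebraic and AAT properties are then checked routinely: each $\psi_i\in L$ is algebraic over $\mathbb{K}(\phi)$; $\psi_0$ is algebraic over $\mathbb{K}(\psi)$ by construction; and since $\phi\in L=\mathbb{K}(\psi)(\psi_0)$ is algebraic over $\mathbb{K}(\psi)$, Lemma \ref{elimination} transfers the AAT from $\phi$ to $\psi$.

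The main obstacle is the translation-closure clause (the last property). Since $\mathbb{K}(\Psi)=L$ is generated over $\mathbb{K}$ by the $\psi_i$, it suffices to show that each $\psi_i(u+k)$ lies in $L$ for $k$ in some ball $U_{\mathbb{K},n}(\delta_i)$. For those $\psi_i=\tilde{A}_{\ell_i}$ forming the transcendence basis this is immediate and is the cleanest part of the argument: using the invariance we have $\tilde{A}_\ell(u+k)=A_\ell(u+k,0)=A_\ell(u,k)$, and substituting $v=k$ into the rational expression $A_\ell\in\mathbb{K}(\phi_{u+d,v+d}: d\in\mathcal{D})$ yields an element of $\mathbb{K}(\phi_{u+d},\phi_{k+d}: d\in\mathcal{D})=L$, since each $\phi(k+d)$ is a constant in $\mathbb{K}$ (for $\|k\|<\delta_i$ small enough that $k+d$ stays in the region of convergence and avoids poles). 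The delicate point is extending this to the primitive element $\psi_0$: one writes $\psi_0$ as a polynomial in the $\phi_{u+d}$ over $\mathbb{K}$, and one must show $\phi(u+d+k)\in L$ for $d\in\mathcal{D}$ and small $k$. This is handled by combining the above for the $\tilde{A}_\ell$ with the algebraic dependence $\phi(w)$ algebraic over $\mathbb{K}(\tilde{A}(w))$ furnished by Lemma \ref{Schwarz} (specialised at $w=u+d+k$), together with the bounded-degree control from Lemma \ref{core} that makes the minimal polynomials behave uniformly in $k$; these ingredients together pin down $\phi(u+d+k)$ inside $L$ for small $k$, yielding the desired $\delta$ (uniform $\epsilon''$) and completing the verification.
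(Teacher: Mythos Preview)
Your overall strategy---apply Lemma~\ref{Schwarz}, extract one-variable functions from the translation-invariant $A_\ell$, then use the primitive element theorem---is close to the paper's, but your choice of the field $\mathbb{K}(\Psi)$ creates a genuine gap in the translation-closure clause.

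A minor issue first: you set $\tilde A_\ell(w)=A_\ell(w,0)$, but the specialisation $v=0$ need not be admissible (the denominator of $A_\ell$ may vanish identically there, or some $\phi(d)$ may be undefined). The paper fixes this by using Lemma~\ref{pole} to choose a generic base point $b$ and setting $B_\ell(u)=A_\ell(u,b)$.

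The real problem is that you take $\mathbb{K}(\Psi)=L=\mathbb{K}(\phi_{u+d}:d\in\mathcal{D})$. For the conclusion you then need $\phi(u+d+k)\in L$ for each $d\in\mathcal{D}$ and small $k$. Your argument---that $\phi(u+d+k)$ is algebraic over $\mathbb{K}(\tilde A_0(u+d+k),\ldots,\tilde A_N(u+d+k))\subset L$, with degree bounded by Lemma~\ref{core}---only yields that $\phi(u+d+k)$ is \emph{algebraic over} $L$, not that it lies \emph{in} $L$. Since $L$ is a finite extension of $\mathbb{K}(\phi)$ it is far from algebraically closed in $M_{\mathbb{K},n}$, and nothing here (bounded degree, continuity in $k$, or otherwise) forces membership. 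Indeed $L$ is generally \emph{not} closed under small translations: already for $\mathcal{D}=\{0\}$ one has $L=\mathbb{K}(\phi)$, and $\phi_{u+k}$ is typically algebraic over but not in $\mathbb{K}(\phi)$. The paper sidesteps this by defining
\[
\mathbb{L}:=\mathbb{K}\bigl((B_\ell)_{u+a}:a\in U_{\mathbb{K},n}(\epsilon''),\ 0\le\ell\le N\bigr),
\]
adjoining all small translates from the outset so that translation-closure (and the uniform convergence in $U_n(\epsilon'')$) is automatic. The work then shifts to showing this a~priori infinitely generated field is finitely generated, which follows from the sandwich $\mathbb{K}(B_0,\ldots,B_N)\subset\mathbb{L}\subset\mathbb{K}(\phi_{u+d}:d\in\mathcal{D})$; only then is the primitive element theorem applied to obtain $\Psi$ with $\mathbb{K}(\Psi)=\mathbb{L}$.
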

\begin{proof}
We will define a field $\mathbb{L}$ and we will check that this $\mathbb{L}$ satisfies the 
conditions of the theorem. 
Once this is done, we will find $\Psi$ such that $\mathbb{L}=\mathbb{K}(\Psi )$.

Let $\epsilon ' \in (0, \epsilon]$, $\mathcal{D}\subset U_{\mathbb{K},n}(\epsilon )$ and 
$A_0,\ldots ,A_N \in \mathbb{K}(\phi _{(u+d,v+d)}:  d\in \mathcal{D})$ 
be the ones obtained applying Lemma \ref{Schwarz} to $\phi$.
By Lemma \ref{pole}.$(1)$ there exists an open dense subset $U\subset U_{\mathbb{K},n}(\epsilon ')$ such that
\[
U\subset \{  a\in U_{\mathbb{K},n}(\epsilon '): \phi ( d+ a)\in \mathbb{K}^n \text{ for all }  d\in \mathcal{D}\}
\]
and
\[
U\subset \{ a\in U_{\mathbb{K},n}(\epsilon '):  A_0 ( u,  a),\ldots ,A_N ( u,  a) \in M_{\mathbb{K},n} \}.
\]
In particular $U\subset \{  a \in U_{\mathbb{K},n}(\epsilon '): \phi ( a)\in \mathbb{K}^n\}$ since $0\in \mathcal{D}$.
Since $U$ is open there exist $b\in U$ and $\epsilon '' \in (0,\epsilon '- \|  b\|]$ such that 
\[
V:=\{  a\in U_{\mathbb{K},n}(\epsilon ') : \|  a - b\| < \epsilon '' \}\subset U.
\]
Fix such $b$.
Then, for each $a \in U_{\mathbb{K},n}(\epsilon '' )$, each $A_\ell(u,b+ a)$ is an element of $M_{\mathbb{K},n}$.
We note that since each $A_\ell(u,v)$ is convergent in $U_{2n}(2\epsilon ')$ and by definition of $b$ and $\epsilon ''$, 
each $A_\ell(u,b+a)$ is convergent in $U_n(\epsilon ')$ for each $a\in U_{\mathbb{K},n}(\epsilon '' )$.
Also, since each $A_\ell$ satisfies the property (\ref{dagger}) of Lemma \ref{Schwarz},
\[
A_\ell(u,b+ a)=A_\ell( u+ a, b)  
\text{ for all }  a\in U_{\mathbb{K},n}(\epsilon '' ).  \tag{$\dagger_{ b}$}\label{daggerb}
\]
For each $\ell\in \{0,\ldots ,N\}$ we define $B_\ell( u):=A_\ell( u, b)$.
Let
\[
\mathbb{L} :=\mathbb{K}((B_\ell)_{u+a}:  a\in U_{\mathbb{K},n}(\epsilon '' ), 0\leq \ell \leq N).
\]
Since for each $a\in U_{\mathbb{K},n}(\epsilon '' )$ each $A_\ell(u,b+a)$ is convergent in $U_n(\epsilon ')$, 
by property (\ref{daggerb}) all the elements of $\mathbb{L}$ are convergent in $U_n(\epsilon ')$ and 
in particular in $U_n(\epsilon '')$.

We are going to show that 
\[
\mathbb{L}\subset \mathbb{K}(\phi _{u+d}:  d\in \mathcal{D}) 
\]
and that each element of $\mathbb{L}$ 
is algebraic over $\mathbb{K}(\phi )$.
Fix $\ell\in \{0,\ldots ,N\}$ and $a\in U_{\mathbb{K},n}(\epsilon '' )$.
We recall from Lemma \ref{Schwarz} that $A_\ell(u,v)$ is convergent in $U_{2n}(2\epsilon ')$ and 
$A(u,v)\in \mathbb{K}(\phi  _{(u+d,v+d)} :  d\in \mathcal{D})$.
Hence we can evaluate $A_\ell(u,v)$ at $v=b+a$ to deduce that $A_\ell(u,b+a)\in \mathbb{K}(\phi _{u+d}:  d\in \mathcal{D})$.
Thus, by property (\ref{daggerb}), $A_\ell(u+a,b)\in \mathbb{K}(\phi _{u+d}:  d\in \mathcal{D})$.
Hence, $\mathbb{L}\subset \mathbb{K}(\phi _{u+d}:  d\in \mathcal{D})$ and therefore, by Corollary \ref{algebraic}, 
each element of $\mathbb{L}$ is algebraic over $\mathbb{K}(\phi )$.

Next, we show that $\phi _1( u+ b),\ldots ,\phi _n( u+ b)$ are algebraically independent over $\mathbb{K}$.
Let $P\in \mathbb{K}[X_1,\ldots ,X_n]$ such that $P(\phi _{u+b})=0$.
By notation $P(\phi _{u+b}(a))=0$ if and only if $P(\phi (a+b))=0$, for 
$ a\in U_{\mathbb{K},n}(\epsilon '')$.
Hence
\[
V \subset \{  a\in U_{\mathbb{K},n}(\epsilon ) : P(\phi ( a))\in \mathbb{K} \text{ and } P(\phi ( a))=0\}.
\]
Since $V$ is open in $U_{\mathbb{K},n}(\epsilon )$, $P(\phi )=0$ by the identity principle.
Since $\phi_1$,\ldots ,$\phi _n$ are algebraically independent over $\mathbb{K}$, $P=0$ and we are done.

Next, we show that $\mathbb{L}$ is finitely generated over $\mathbb{K}$ and its transcendence degree is $n$.
Firstly, we note that $\phi$ is algebraic over $\mathbb{K}(\phi _{u+b})$ because 
the coordinate functions of $\phi _{u+b}$ are algebraically independent over 
$\mathbb{K}$ and $\phi _{u+b}$ is algebraic over $\mathbb{K}(\phi )$ by Corollary \ref{algebraic}.
Since $\phi _{ u+ v}$ is algebraic over $\mathbb{K}(A_0,\ldots ,A_N)$, evaluating each $A_\ell(u,v)$ at $v=b$ we deduce that 
$\phi _{u+b}$ is algebraic over $\mathbb{K}(B_0,\ldots ,B_N)$.
Therefore, $\phi$ is algebraic over $\mathbb{K}( B_0,\ldots ,B_N)$.
On the other hand, $\mathbb{K}(B_0,\ldots ,B_N)$ is a subset of $\mathbb{K}(\phi _{u+d}:  d\in \mathcal{D})$ and the 
latter field is algebraic over $\mathbb{K}(\phi )$ by Corollary \ref{algebraic}.
Hence the three fields have transcendence degree $n$ over $\mathbb{K}$.
Now, $\mathcal{D}$ is finite and 
\[
\mathbb{K}(B_0,\ldots ,B_N)\subset \mathbb{L} \subset \mathbb{K}(\phi _{u+d}:  d\in \mathcal{D}),
\]
therefore, $\mathbb{L}$ is finitely generated over $\mathbb{K}$ and its transcendence degree is $n$.

Let $f \in \mathbb{L}$, we now check that there exists $\delta >0$ such that such that for every  
$a\in U_{\mathbb{K},n}(\delta )$, $f_{u+a}\in \mathbb{L}$ and $f_{u+a}$ is convergent in $U_n(\epsilon '')$.
Since $f \in \mathbb{L}$, there exist $m\in \mathbb{N}$, $\ell(1),\ldots ,\ell(m)\in \{0,\ldots ,N\}$ and 
$a_1,\ldots , a_m\in U_{\mathbb{K},n}(\epsilon '' )$ such that $f$ is a rational function of 
$(B_{\ell(1)})_{u+a_1}$, \ldots ,$(B_{\ell(m)})_{u+a_m}$.
Take $\delta >0$ such that $\delta <\epsilon '' -\max\{ \| a_1\|,\ldots ,\|  a_m\|\}$.
Then, for all $ a\in U_{\mathbb{K},n}(\delta )$, $f _{u+a}\in \mathbb{L}$ and $f_{u+a}$ is convergent in $U_n(\epsilon '')$.

Finally, by the primitive element theorem there exist $\psi _1,\ldots ,\psi _n\in \mathbb{L}$ algebraically independent 
over $\mathbb{K}$ and $\psi _0$ algebraic over $\mathbb{K}(\psi _1,\ldots ,\psi _n)$ such that 
$\mathbb{L}=\mathbb{K}(\psi _0,\psi _1,\ldots ,\psi _n)$.
Now, since all the elements of $\mathbb{L}$ are algebraic over $\mathbb{K}(\phi )$, $\psi:= (\psi _1,\ldots ,\psi _n)$ admits 
an AAT by Lemma \ref{elimination}.
\end{proof}

We now have all the ingredients to prove the main result of this section.

\begin{theorem}\label{T2}
Let $\phi \in M_{\mathbb{K},n}^n$ admitting an AAT.
Then there exists $\psi :=(\psi _1,\ldots ,\psi _n)\in M_{\mathbb{K},n}^n$ admitting an AAT and algebraic over $\mathbb{K}(\phi)$ 
and $\psi _0\in M_{\mathbb{K},n}$ algebraic over $\mathbb{K}(\psi )$ such that
\begin{enumerate}
\item[$(1)$] for each $f\in \mathbb{K}(\psi _0,\ldots ,\psi _n)$ there exists $R\in \mathbb{K}(X_1,\ldots ,X_{2(n+1)})$ such that
\[
f ( u+ v)=R \big(\psi _0( u),\ldots ,\psi _n( u),\psi _0( v),\ldots ,\psi _n( v)\big),
\]
\item[$(2)$] and each $\psi _0,\ldots ,\psi _n$ is the quotient of two power series, both convergent in all $\mathbb{C}^n$.
\end{enumerate}
\end{theorem}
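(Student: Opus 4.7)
The plan is to apply Lemma~\ref{K(Psi)} to $\phi$ and let $\Psi=(\psi_0,\psi_1,\ldots,\psi_n)\in M_{\mathbb{K},n}^{n+1}$ be the resulting tuple: convergent in some $U_n(\epsilon'')$, algebraic over $\mathbb{K}(\phi)$, with $\psi=(\psi_1,\ldots,\psi_n)$ admitting an AAT, $\psi_0$ algebraic over $\mathbb{K}(\psi)$, and satisfying the translation-stability property that for each $f\in\mathbb{K}(\Psi)$ there is $\delta_f>0$ with $f_{u+k}\in\mathbb{K}(\Psi)$ and convergent in $U_n(\epsilon'')$ for every $k\in U_{\mathbb{K},n}(\delta_f)$. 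I will then show that this $\Psi$ already satisfies both (1) and (2).

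For (1), I fix $f\in\mathbb{K}(\Psi)$ and set $F(u,v):=f(u+v)\in M_{\mathbb{K},2n}$. The first sub-claim is that $F$ is algebraic over $\mathbb{K}(\Psi_{(u,v)})$. A transcendence-degree count shows that $\phi$ is algebraic over $\mathbb{K}(\psi)$: both $\phi$ and $\psi$ are algebraically independent $n$-tuples over $\mathbb{K}$, while each $\psi_i$ is algebraic over $\mathbb{K}(\phi)$, so $\mathbb{K}(\phi,\psi)$ is algebraic over $\mathbb{K}(\psi)$. Applied coordinate-wise in $u$ and $v$, this makes $\mathbb{K}(\phi_{(u,v)})$ algebraic over $\mathbb{K}(\Psi_{(u,v)})$. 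Since $\phi$ admits an AAT, $\phi_{u+v}$ is algebraic over $\mathbb{K}(\phi_{(u,v)})$, hence over $\mathbb{K}(\Psi_{(u,v)})$; since $\Psi_{u+v}$ is algebraic over $\mathbb{K}(\phi_{u+v})$ and $F\in\mathbb{K}(\Psi_{u+v})$, the sub-claim follows. Next, Lemma~\ref{K(Psi)} provides $\delta_f>0$ with $F(u,k)\in\mathbb{K}(\Psi(u))$ for every $k\in U_{\mathbb{K},n}(\delta_f)$, and by the symmetry $f(k+v)=f(v+k)$ also $F(k,v)\in\mathbb{K}(\Psi(v))$. Shrinking $\delta_f$ if needed so that $F$ is convergent on $U_{2n}(\delta_f)$, the hypotheses of Proposition~\ref{S-BM-alg} are met, and I conclude $F\in\mathbb{K}(\Psi_{(u,v)})$, which is the rational expression $R$ required in (1).

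For (2), I will use the identities $\psi_i(u+v)=R_i(\Psi(u),\Psi(v))$ just obtained to meromorphically extend each $\psi_i$ to all of $\mathbb{C}^n$. For any $w\in\mathbb{C}^n$ I choose a finite chain $0=w_0,w_1,\ldots,w_N=w$ whose consecutive increments $w_{j+1}-w_j$ lie in $U_n(\epsilon''/2)$ and define $\psi_i(w_{j+1})$ inductively by $\psi_i(w_{j+1})=R_i(\Psi(w_j),\Psi(w_{j+1}-w_j))$. The identity principle applied to the relation $\psi_i(u+v)=R_i(\Psi(u),\Psi(v))$, which is a formal identity in $M_{\mathbb{K},2n}$, forces different chains to yield the same germ of meromorphic function at $w$. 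This produces a well-defined meromorphic function on $\mathbb{C}^n$ in the sheaf-theoretic sense of Remark~\ref{rmkPoincare}, whose germ at $0$ is the original $\psi_i$. Since the Poincar\'e problem is solvable on $U=\mathbb{C}^n$ (\emph{loc.\ cit.}), this meromorphic function is globally the quotient of two entire functions, giving (2).

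The main technical obstacle I expect is the consistency of the piecewise definition in the extension step: one must avoid the indeterminacy loci of the $R_i$ (subsets where some numerator and denominator of $R_i$ vanish simultaneously), and show that two chains sharing an endpoint produce the same germ even when one of the chains meets such a locus at an intermediate step. My route around this will be to note that a generic chain avoids the locus, that meromorphic extensions are determined by their values on any open set by the identity principle, and then to glue the local pieces into a global section of the meromorphic sheaf. Once that is done, the Poincar\'e--Cousin theorem supplies the passage from a global meromorphic function on $\mathbb{C}^n$ to a quotient of two entire functions, completing (2).
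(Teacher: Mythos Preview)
Your argument for (1) is correct and close to the paper's: both conclude by applying Proposition~\ref{S-BM-alg} once $f(u+v)$ is known to be algebraic over $\mathbb{K}(\Psi_{(u,v)})$. The paper reaches this algebraicity by extending $f$ to an algebraically independent tuple $g=(f,g_2,\ldots,g_n)$ and invoking Lemma~\ref{elimination}; your transcendence-degree argument through $\phi$ (namely $F\in\mathbb{K}(\Psi_{u+v})$, $\Psi_{u+v}$ algebraic over $\mathbb{K}(\phi_{u+v})$, $\phi_{u+v}$ algebraic over $\mathbb{K}(\phi_{(u,v)})$, $\phi_{(u,v)}$ algebraic over $\mathbb{K}(\Psi_{(u,v)})$) is a mild and arguably more direct variant.

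For (2) your route is genuinely different and, as written, incomplete. The sentence ``the identity principle \ldots\ forces different chains to yield the same germ'' is not a proof: the identity principle tells you two meromorphic functions agreeing on an open set agree on a connected domain, but it does not by itself rule out monodromy of an analytic continuation. To make your chain argument rigorous you would need to show, for generic small $c$, that $u\mapsto R(\Psi(u-c),\Psi(c))$ is a well-defined meromorphic function on $c+U_n(r)$ agreeing with $\Psi$ on the overlap with $U_n(r)$; that for two such $c_1,c_2$ the resulting extensions coincide on $(c_1+U_n(r))\cap(c_2+U_n(r))$ (this is where the identity principle enters, and it requires checking that the overlap meets the original domain, which it does once $|c_i|<r$); and finally that the addition formula persists on the enlarged domain so that the induction can be repeated. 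None of these steps is hard, but none is supplied.

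The paper sidesteps all of this with a doubling argument that exploits the translation-stability of Lemma~\ref{K(Psi)} in a way your proof of (2) does not. From $\psi_i(u+v)=A(u,v)\in\mathbb{K}(\Psi_{(u,v)})$ the paper picks a small $p$ with $A(u+p,u-p)\in M_{\mathbb{K},n}$ and observes
\[
\psi_i(2u)=A(u+p,u-p)\in\mathbb{K}\bigl(\Psi_{u+p},\Psi_{u-p}\bigr)\subset\mathbb{K}(\Psi),
\]
the last inclusion being exactly the translation-stability clause of Lemma~\ref{K(Psi)}. Iterating yields $\Psi(u)\in\mathbb{K}(\Psi(2^{-N}u))$ for every $N$, so each $\psi_i$ is convergent on $U_n(2^N\epsilon)$ for all $N$, hence globally meromorphic; Remark~\ref{rmkPoincare} then gives the quotient of entire functions. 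This stays entirely within identities in $M_{\mathbb{K},n}$ and never needs to glue local pieces or avoid indeterminacy loci, which is precisely the obstacle you flagged.
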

\begin{proof}
Let $\phi:=(\phi _1,\ldots ,\phi _n)\in M_{\mathbb{K},n}^n$ admitting an AAT.
Take $\epsilon$ such that $\phi$ is convergent in $U_n(\epsilon )$.
Applying Lemma \ref{K(Psi)} we obtain $\epsilon ''\in (0,\epsilon ]$ and 
$\Psi :=(\psi _0,\ldots ,\psi _n)\in M_{\mathbb{K},n}^{n+1}$ as in the lemma.
We next check that this $\Psi$ satisfies the conditions of the theorem.

$(1)$ Fix a non constant $f\in \mathbb{K}(\Psi )$.
Fix $\delta \in (0, \epsilon '']$ such that $f_{u+k}\in \mathbb{K}(\Psi )$ for each $ k\in U_n(\delta)$ as in Lemma \ref{K(Psi)}.
Let $\varepsilon < \delta$ and such that $f_{ u+ v}$ is convergent in $U_{2n}(\varepsilon )$.
It is enough to show that $f_{u+v}\in M_{\mathbb{K},2n}$ is algebraic over $\mathbb{K}(\Psi _{(u,v)})$ since then we can apply 
Proposition \ref{S-BM-alg} noting that both $f_{u+k}\in \mathbb{K}(\Psi (u))$ and $f_{v+k}\in \mathbb{K}(\Psi (v))$ for each 
$k\in U_{\mathbb{K},n}(\varepsilon )$.
With this aim, take $g_2,\ldots ,g_n\in \mathbb{K}(\psi )$ such that $f,g_2,\ldots ,g_n$ are algebraically independent over $\mathbb{K}$.
Let $g:=(f,g_2,\ldots ,g_n)$ and we note that $g$ is algebraic over $\mathbb{K}(\psi )$.
Since $\psi$ admits an AAT, $g$ admits an AAT by Lemma \ref{elimination}.
Hence $g_{u+v}$ is algebraic over $\mathbb{K}(g _{(u,v)})$ and therefore over $\mathbb{K}(\Psi _{(u,v)})$.
This concludes the proof of $(1)$.

\smallskip

\noindent $(2)$ We may assume that $\psi _0\neq 0$.
Fix $i\in \{0,\ldots ,n\}$.
We have already shown that $\psi _i( u+ v)\in \mathbb{K}(\Psi _{(u,v)})$.
Let $A( u, v):=\psi _i( u+ v)$.
By Lemma \ref{K(Psi)} and by reducing $\epsilon $ if necessary, we may assume that $\Psi$ is convergent in 
$U_n(\epsilon )$ and $\mathbb{K}(\Psi _{u+k})\subset \mathbb{K}(\Psi)$ for all $k\in U_{\mathbb{K},n}(\epsilon)$.
We show that there exists $ p\in U_{\mathbb{K},n}(\epsilon)$ such that 
\[
A(u+p,u-p)\in M_{\mathbb{K},n}.
\]
Take $\alpha ,\beta \in A_{\mathbb{K},2n}$, $\beta \neq 0$ such that 
$A(u,v)=\frac{\alpha (u,v) }{\beta (u,v)}$.
Suppose for a contradiction that $\beta (u+k,u-k)=0$ for all $k\in U_{\mathbb{K},n}(\epsilon)$.
Then 
\[
\beta \left(\frac{a+b}{2}+\frac{a-b}{2},\frac{a+b}{2}-\frac{a-b}{2}\right) =0
\]
for all $a,b\in U_{\mathbb{K},n}(\epsilon/2)$.
So $\beta (a,b)=0$ for all $( a, b)\in U_{\mathbb{K},n}(\epsilon/2)$ and hence $\beta =0$, a contradiction.
Then
\[
\psi _i(2 u)=A( u+ p, u- p)\in \mathbb{K}(\Psi _{u+p}( u),
\Psi _{u-p}( u))\subset \mathbb{K}(\Psi (u)).
\]

By induction we deduce that 
\[
\psi _0( u),\ldots ,\psi _n( u)\in \mathbb{K}(\Psi (2^{-N} u))
\]
for each $N\in \mathbb{N}$.
Hence since $\Psi (2^{-N} u)$ is convergent in $U_n(2^N\epsilon)$, $\Psi$ is also convergent in $U_n(2^N\epsilon)$. Thus each $\psi_i$ is a meromorphic function and therefore by Remark \ref{rmkPoincare} it is the quotient of two power series convergent in all $\mathbb{C}^n$.
\end{proof}

We end this section with some basic properties of differentials that we will need for the proof of Theorem \ref{T1}.	
We introduce the following notation.
Let $u:=(u_1,\ldots ,u_n)$ be $n$ variables, then for any $j\in \{1,\ldots ,n\}$ we denote 
$\partial _{u_j}:A_{\mathbb{K},n}\rightarrow A_{\mathbb{K},n}$ the formal derivative in 
the variable $u_j$.
As $\partial _{u_j}$ is a derivation of $A_{\mathbb{K},n}$ it induces a derivation on $M_{\mathbb{K},n}$.
Given $\phi \in M_{\mathbb{K},n}$ let $d\phi$ be the differential of $\phi$, {\it i.e.} 
$[\partial _{u_1}\phi,\ldots ,\partial _{u_n}\phi ]$.
We note that if $^a \phi$ is the germ of an analytic function at $ 0$ then $^a (d\phi)$ is $\nabla \, {^a\phi}$, 
the gradient of $^a \phi$.

\begin{lemma}\label{dindependence}
Let $\phi \in M_{\mathbb{K},n}^m$ such that $d\phi _1,\ldots ,d\phi _m$ are linearly independent over $M_{\mathbb{K},n}$.
Then 
\begin{enumerate}
\item[$(1)$] $\phi _1,\ldots ,\phi _m$ are algebraically independent over $\mathbb{K}$.
\item[$(2)$] If $\psi \in M_{\mathbb{K},n}^m$ and $\phi$ is algebraic over $\mathbb{K}(\psi )$ then 
$d\psi _1,\ldots ,d\psi _m$ are linearly independent over $M_{\mathbb{K},n}$.
\end{enumerate}
\end{lemma}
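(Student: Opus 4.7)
The plan is to prove both parts by standard differential-algebra arguments, using that $\partial_{u_1}, \ldots, \partial_{u_n}$ are commuting derivations of $M_{\mathbb{K},n}$, the chain rule for formal derivatives, and the fact that we work in characteristic zero.

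For $(1)$, I argue by contradiction. Suppose $\phi_1, \ldots, \phi_m$ are algebraically dependent over $\mathbb{K}$, and pick $P \in \mathbb{K}[X_1, \ldots, X_m]$ nonzero of \emph{minimal total degree} with $P(\phi_1, \ldots, \phi_m) = 0$. Applying the derivation $\partial_{u_j}$ to this identity yields, for each $j = 1, \ldots, n$,
\[
\sum_{i=1}^{m} \frac{\partial P}{\partial X_i}(\phi)\cdot \partial_{u_j}\phi_i \;=\; 0,
\]
which packages into the single vector identity $\sum_{i=1}^{m} \frac{\partial P}{\partial X_i}(\phi)\cdot d\phi_i = 0$ in $M_{\mathbb{K},n}^n$. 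A nonzero constant polynomial cannot vanish at $\phi$, so $P$ has positive total degree; then (in characteristic zero) some $\frac{\partial P}{\partial X_i}$ is a nonzero polynomial of strictly smaller total degree than $P$. By the minimality of $P$, the evaluation $\frac{\partial P}{\partial X_i}(\phi)$ must be nonzero, yielding a nontrivial $M_{\mathbb{K},n}$-linear dependence among $d\phi_1, \ldots, d\phi_m$, contrary to the hypothesis.

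For $(2)$, fix $i \in \{1, \ldots, m\}$. Since $\phi_i$ is algebraic over $\mathbb{K}(\psi)$, after clearing denominators we obtain $Q_i \in \mathbb{K}[X_1, \ldots, X_m, Y]$ of \emph{minimal positive degree in $Y$} with $Q_i(\psi, \phi_i) = 0$. In characteristic zero, this minimality forces $(\partial_Y Q_i)(\psi, \phi_i) \neq 0$, since otherwise $\partial_Y Q_i$ would be a polynomial of strictly smaller $Y$-degree still annihilating $\phi_i$ over $\mathbb{K}(\psi)$. Applying $\partial_{u_j}$ to $Q_i(\psi, \phi_i) = 0$ and solving gives
\[
d\phi_i \;=\; -\frac{1}{(\partial_Y Q_i)(\psi, \phi_i)}\sum_{j=1}^{m} \frac{\partial Q_i}{\partial X_j}(\psi, \phi_i)\cdot d\psi_j,
\]
an $M_{\mathbb{K},n}$-linear expression. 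Hence the $M_{\mathbb{K},n}$-span of $\{d\psi_1, \ldots, d\psi_m\}$ in $M_{\mathbb{K},n}^n$ contains the $m$ vectors $d\phi_1, \ldots, d\phi_m$, which are linearly independent by assumption. This span therefore has dimension at least $m$; since it is generated by $m$ vectors, $d\psi_1, \ldots, d\psi_m$ must themselves be linearly independent over $M_{\mathbb{K},n}$.

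No step presents a genuine obstacle; the one subtlety is ensuring $(\partial_Y Q_i)(\psi, \phi_i) \neq 0$ in $(2)$, which is exactly where the minimality of $Q_i$ in $Y$ combined with characteristic zero is used. The whole argument is purely formal, so no analytic or convergence input is required beyond the ring-theoretic setup of $M_{\mathbb{K},n}$.
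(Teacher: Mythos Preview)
Your argument is correct and, for part~(1), essentially identical to the paper's: both differentiate a minimal algebraic relation among the $\phi_i$ to produce a nontrivial $M_{\mathbb{K},n}$-linear dependence among the $d\phi_i$. The only cosmetic difference is that you minimise total degree over $\mathbb{K}$, whereas the paper takes the minimal polynomial of one $\phi_i$ over the field generated by the others.

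For part~(2) the two proofs begin the same way---differentiating the relation $Q_i(\psi,\phi_i)=0$ to write each $d\phi_i$ as an $M_{\mathbb{K},n}$-linear combination of the $d\psi_j$---but finish differently. You conclude by a straight dimension count: the span of $d\psi_1,\ldots,d\psi_m$ contains $m$ linearly independent vectors, hence its $m$ generators must themselves be independent. The paper instead invokes part~(1) to see that $\psi$ is in turn algebraic over $\mathbb{K}(\phi)$, obtains a second matrix relation $[d\psi]=H[d\phi]$, and argues that the two transition matrices satisfy $GH=\mathrm{Id}$ because the $d\phi_i$ are independent. Your route is shorter and avoids the appeal back to part~(1); the paper's has the side benefit of making explicit that the change-of-basis matrix between $\{d\phi_i\}$ and $\{d\psi_j\}$ is invertible.
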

\begin{proof}
$(1)$ Suppose that $\phi _1,\ldots ,\phi _m$ are algebraically dependent over $\mathbb{K}$, then we may assume that $\phi _m$ 
is algebraic over $\mathbb{K}(\phi _1,\ldots ,\phi _{m-1})$.
If $\phi _m$ is constant then $d\phi _m=0$ and the lemma is proved, so we may assume that $\phi _m\notin \mathbb{K}$.

Let $P$ be the minimal polynomial of $\phi _m$ over $\mathbb{K}(\phi_1,\ldots ,\phi _{m-1})$.
We note that $P(\phi _m)$ and $\frac{\partial P}{\partial X} (\phi _m)$ are elements of $M_{\mathbb{K},n}$ and therefore
\[
dP(\phi _m)=\sum _{i=1}^{m-1} g_i\, d\phi _i+ \frac{\partial P}{\partial X} (\phi _m)\, d\phi _m
\]
for some $g_1,\ldots ,g_{m-1}\in M_{\mathbb{K},n}$.
Since $P$ is the minimal polynomial of $\phi _m$, $P(\phi _m)=0$.
This implies that $dP(\phi _m)$ is the vector $[0,\ldots ,0]$ of $M_{\mathbb{K},n}^n$ and there exist
$h_1,\ldots ,h_{m-1}\in M_{\mathbb{K},n}$ such that
\[
d\phi _m= \sum _{i=1}^{m-1} h_i\, d\phi _i.
\]

\smallskip

\noindent $(2)$ For every $i \in \{1,\ldots ,m\}$ we have that $\phi _i$ is not constant because $d\phi _i\neq 0$.
Since $\phi$ is algebraic over $\mathbb{K}(\psi )$, by the proof of $(1)$ for each $i \in \{1,\ldots ,m\}$ 
there exist $g_{i,1},\ldots ,g_{i,m}\in M_{\mathbb{K},n}$ such that
\[
d\phi _i=\sum _{j=1}^m g_{i,j}\, d\psi _j.
\]
Therefore there exists a $m\times m$ matrix $G$ with coefficients in $M_{\mathbb{K},n}$ such that
\[
\left[ \begin{matrix} d\phi _1\\ \vdots \\ d\phi _m\end{matrix}\right] = 
G\left[ \begin{matrix} d\psi _1 \\ \vdots \\ d\psi _m\end{matrix}\right].
\]
Since $\phi _1,\ldots ,\phi _m$ are algebraically independent over $\mathbb{K}$ by $(1)$,
we have that $\psi$ is algebraic over $\mathbb{K}(\phi )$ and hence, by symmetry, there exists a $m\times m$ matrix $H$ 
with coefficients in $M_{\mathbb{K},n}$ such that
\[
\left[ \begin{matrix} d\psi _1\\ \vdots \\ d\psi _m\end{matrix}\right] = 
H\left[ \begin{matrix} d\phi _1 \\ \vdots \\ d\phi _m\end{matrix}\right].
\]
Hence $HG=GH=Id$, and so $d\psi _1,\ldots ,d\psi _m$ are linearly independent over $M_{\mathbb{K},n}$.
\end{proof} 

\section{Periods of real meromorphic maps.}\label{meromorphic maps}

This section has two different purposes. Firstly, after recalling basic definitions and properties of meromorphic and real meromorphic functions, we give functorial versions of the results in Section \ref{AAT} and we prove Theorem \ref{T1}. Secondly, we will introduce some definitions and prove some technicals lemmas related to periods of meromorphic maps  from $\mathbb{C}^n$ to $\mathbb{C}^n$ that will be relevant to describe Nash atlas for $(\mathbb{R}^n,+)$ in the next sections.

\medskip

\subsection{Locally Nash groups and AAT}We begin recalling some concepts of analytic and meromorphic functions of several variables. We use the definitions and notations introduced at the beginning of Section 3 and recall that the elements of $\mathcal{M}_{\mathbb{C},n}(\mathbb{C}^n)$ are the meromorphic functions.  Let $U\subset \mathbb{C}^n$ be an open connected neighborhood of $0$. We say that an analytic function $f:U\rightarrow \mathbb{C}$ is a 
\emph{real analytic function} if $f(\mathbb{R}^n\cap U)\subset \mathbb{R}$.
A meromorphic function $f:\mathbb{C}^n\rightarrow \mathbb{C}$ is \emph{a real meromorphic function} if there exist real analytic functions $g,h:\mathbb{C}^n\rightarrow \mathbb{C}$, 
with $h$ not identically zero, such that $f=g/h$. Real analytic and real meromorphic maps are defined in the obvious way. 

Analytic functions can be characterized in terms of real analytic functions since for any analytic function $f:U\rightarrow \mathbb{C}$ there exist real analytic functions $\text{Re}(f),\text{Im}(f):U\rightarrow \mathbb{C}$ such that $f=\text{Re}(f)+i\text{Im}(f)$, and similarly for meromorphic functions.
We also remind that an analytic map $f:U\rightarrow \mathbb{C}^m$ is a real analytic function if and only 
${^t}f\in A_{\mathbb{R},n}^m$, and similarly for real meromorphic functions (with ${^t}f\in M_{\mathbb{R},n}^m$).

\medskip

Let $U\subset \mathbb{K}^n$ be an open connected neighborhood of $0$. Note that  $f_1,\ldots ,f_n\in \mathcal{M}_{\mathbb{K},n}(U)$ are algebraically independent over $\mathbb{K}$ if and only if ${^t f_1},\ldots ,{^t f_m}$ 
are algebraically independent over $\mathbb{K}$. Given $g=(g_1,\ldots,g_m)\in (\mathcal{M}_{\mathbb{K},n}(U))^m$, we say that $f=(f_1,\ldots,f_\ell)\in (\mathcal{M}_{\mathbb{K},n}(U))^\ell$ is \emph{algebraic} over $\mathbb{K}(g):=\mathbb{K}(g_1,\ldots,g_m)$ if each $f_i$ is algebraic over $\mathbb{K}(g)$, and again this is true if and only if ${^ t}f$ is algebraic over $\mathbb{K}({^t g})$.

We will say $f\in (\mathcal{M}_{\mathbb{K},n}(U))^n$ admits an \emph{algebraic addition theorem} 
if ${^t f}\in M_{\mathbb{K},n}^n$ admits an AAT. Not every element 
of $M_{\mathbb{C},n}^n$ admitting an AAT comes from a meromorphic map $f:\mathbb{C}^n\rightarrow \mathbb{C}^n$ admiting an AAT.
An example of this is the function $u\mapsto \sqrt{u+1}$, that although is not a meromorphic function its Taylor power series 
expansion at $0$ admits an AAT. 

\medskip

Now, we rewrite Corollary \ref{algebraic} and Lemma \ref{elimination} in terms of meromorphic functions.

\begin{corollary}\label{algebraic for functions}
Let $f,g: \mathbb{C}^n\rightarrow \mathbb{C}^n$ be meromorphic maps such that $f$ is algebraic over $\mathbb{C}(g)$.
\begin{enumerate}
\item[$(1)$] If $f$ admits an AAT then $f(u+a)$ is algebraic over $\mathbb{C}(f(u))$ for each $a\in \mathbb{C}^n$.
\item[$(2)$] If $f$ admits an AAT then $g$ admits an AAT.
In particular $f(u+a)$ admits an AAT for each $a\in \mathbb{C}^n$.
\end{enumerate}
\end{corollary}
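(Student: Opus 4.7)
The strategy is to transfer both statements to the corresponding power-series results of Section \ref{AAT}, exploiting two facts recalled at the beginning of this section: first, by Remark \ref{rmkPoincare}, every meromorphic function on $\mathbb{C}^n$ is the global quotient of two entire functions; second, algebraic dependence of meromorphic functions on $\mathbb{C}^n$ is equivalent to algebraic dependence of their Taylor expansions at $0$ in $M_{\mathbb{C},n}$.

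For part $(1)$, write $f_i=g_i/h_i$ with $g_i,h_i$ entire on $\mathbb{C}^n$. Then ${^t f}\in M_{\mathbb{C},n}^n$ is convergent in $U_n(\epsilon)$ for every $\epsilon>0$, and by hypothesis ${^t f}$ admits an AAT. Given $a\in\mathbb{C}^n$, I would apply Corollary \ref{algebraic} with any $\epsilon>\|a\|$ to conclude that $({^t f})_{u+a}$ is algebraic over $\mathbb{C}({^t f})$ in $M_{\mathbb{C},n}$. Now $({^t f_i})_{u+a}$ is by construction the ratio of the Taylor series at $0$ of $g_i(\cdot+a)$ and $h_i(\cdot+a)$, i.e.\ ${^t(f_i(\cdot+a))}$, so translating the resulting polynomial relation back through the embedding $\mathcal{M}_{\mathbb{C},n}(\mathbb{C}^n)\hookrightarrow M_{\mathbb{C},n}$ shows that $f(u+a)$ is algebraic over $\mathbb{C}(f(u))$ as meromorphic maps on $\mathbb{C}^n$.

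For the first assertion of $(2)$, the same embedding does the work: $f$ algebraic over $\mathbb{C}(g)$ means ${^t f}$ algebraic over $\mathbb{C}({^t g})$, and ${^t f}$ admits an AAT, so Lemma \ref{elimination} yields that ${^t g}$ admits an AAT, hence so does $g$. For the \emph{in particular} claim, set $g(u):=f(u+a)$; part $(1)$ directly gives $g$ algebraic over $\mathbb{C}(f)$, and applying $(1)$ with $-a$ in place of $a$ and then substituting $u\mapsto u+a$ in the resulting polynomial relation yields $f$ algebraic over $\mathbb{C}(g)$. Since $f$ admits an AAT, the first assertion of $(2)$ then forces $g=f(\cdot+a)$ to admit an AAT.

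No major obstacle is expected: the substantive work was already done in Lemma \ref{core}, Corollary \ref{algebraic} and Lemma \ref{elimination}. The only care needed is the bookkeeping between the formal substitution $({^t f})_{u+a}\in M_{\mathbb{C},n}$ and the analytic translation $f(\cdot+a)\in\mathcal{M}_{\mathbb{C},n}(\mathbb{C}^n)$, which matches precisely thanks to the globally convergent Poincaré representation of $f$.
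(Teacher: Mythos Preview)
Your proposal is correct and matches the paper's intended approach: the paper presents this corollary without proof, explicitly as a ``rewrite'' of Corollary~\ref{algebraic} and Lemma~\ref{elimination} in terms of meromorphic functions, and your argument carries out exactly that transfer via the Taylor-expansion embedding and the global Poincar\'e representation. One very minor remark: for the ``in particular'' clause in $(2)$, your route through the substitution $u\mapsto u+a$ is fine, but it is marginally quicker to observe that the coordinates of $f(\cdot+a)$ are algebraically independent (since translation is a bijection of $\mathbb{C}^n$) and then invoke the converse direction of Lemma~\ref{elimination} directly.
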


We recall that the only analytic structure on $(\mathbb{R}^n,+)$ is the standard one (the one given by the identity map) and that 
its compatible charts are given exactly by the analytic diffeomorphisms.
In what follows we will use these facts without further mention.
Next, we relate AAT to properties of analytic groups, as mentioned before the proof of Fact \ref{compatibility0}.

\begin{lemma}\label{AAT-star}
Let $(U,\phi )$ be a chart of the identity of $(\mathbb{R}^n,+)$ compatible with its standard analytic structure.
Then the following are equivalent:
\begin{enumerate}
\item[$(1)$] there exists an open neighborhood of the identity $U'\subset U$ such that
\[
\phi \circ + \circ (\phi ^{-1},\phi ^{-1}):\phi (U')\times \phi (U')\rightarrow \phi (U) :
(x,y)\mapsto \phi (\phi ^{-1}(x)+\phi ^{-1}(y))
\]
is a Nash map, and therefore by Fact  \ref{compatibility0} there exists an open neighborhood $V\subset U$ of $0$ such that $(\mathbb{R}^n,+,\phi|_V)$ is a locally Nash group. 
\item[$(2)$] $\phi \in \mathcal{O}_{\mathbb{R},n}(U)$ admits an AAT.
\end{enumerate}
\end{lemma}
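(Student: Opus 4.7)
The plan is to unwind both conditions so that they become instances of the general fact (used throughout the paper) that, on a semialgebraic open set, a map is Nash if and only if it is analytic and each coordinate is algebraic over $\mathbb{R}(\mathrm{id})$. Throughout, denote by $F\colon\phi(U')\times\phi(U')\to\phi(U)$ the composition in $(1)$, and let $(x,y)$ be coordinates on $\phi(U')\times\phi(U')$. Since $\phi$ is assumed to be an analytic diffeomorphism onto $\phi(U)$, $F$ is automatically analytic; so on both sides the only nontrivial content is the \emph{algebraic} part.

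For $(1)\Rightarrow(2)$, I would first note that $\phi\in\mathcal{O}_{\mathbb{R},n}(U)$ since $\phi$ is real analytic (it is an analytic diffeomorphism of a neighborhood of $0\in\mathbb{R}^n$ onto an open subset of $\mathbb{R}^n$). Algebraic independence over $\mathbb{R}$ of $\phi_1,\ldots,\phi_n$ follows from Lemma \ref{dindependence}.$(1)$, because the Jacobian of $\phi$ at $0$ is invertible, so $d\phi_1,\ldots,d\phi_n$ are linearly independent over $M_{\mathbb{R},n}$. Next, since $F$ is Nash on a semialgebraic open set, each coordinate $F_i$ is algebraic over $\mathbb{R}(\mathrm{id})$: there are polynomials $P_i\in\mathbb{R}[X_1,\ldots,X_n,Y_1,\ldots,Y_n,Z]$ of positive degree in $Z$ with
\[
P_i\!\bigl(x,y,F_i(x,y)\bigr)\equiv 0\quad\text{on }\phi(U')\times\phi(U').
\]
Substituting $x=\phi(u)$, $y=\phi(v)$ (valid for $u,v$ in a small neighborhood of $0\in\mathbb{R}^n$) gives $P_i(\phi(u),\phi(v),\phi_i(u+v))\equiv 0$, and passing to Taylor expansions at $0$ yields the same identity in $M_{\mathbb{R},2n}$. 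Thus $^t\phi_i(u+v)$ is algebraic over $\mathbb{R}(^t\phi_{(u,v)})$, which is exactly the AAT condition.

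For $(2)\Rightarrow(1)$, assume $^t\phi$ admits an AAT. Then for each $i$ there is a nonzero polynomial $P_i\in\mathbb{R}[X_1,\ldots,X_n,Y_1,\ldots,Y_n,Z]$, of positive degree in $Z$, with
\[
P_i\!\bigl({^t\phi}(u),{^t\phi}(v),{^t\phi_i}(u+v)\bigr)=0\quad\text{in }M_{\mathbb{R},2n}.
\]
Clearing denominators if necessary and invoking the isomorphism between $A_{\mathbb{R},2n}$ and $\mathcal{O}_{\mathbb{R},2n}$, this formal identity becomes an analytic identity on a neighborhood of $(0,0)\in\mathbb{R}^{2n}$. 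Shrink $U'$ so that $\phi(U')$ is semialgebraic (say the image of an open ball) and so that the identity holds on the corresponding neighborhood, and then substitute $u=\phi^{-1}(x)$, $v=\phi^{-1}(y)$ to obtain
\[
P_i\!\bigl(x,y,F_i(x,y)\bigr)\equiv 0\quad\text{on }\phi(U')\times\phi(U').
\]
This shows that each $F_i$ is algebraic over $\mathbb{R}(\mathrm{id})$ on the semialgebraic open set $\phi(U')\times\phi(U')$. Combined with the analyticity of $F$, the characterization \cite[Proposition 8.1.8]{Bochnak_Coste_Roy} recalled in Section \ref{category} gives that $F$ is Nash, which is $(1)$.

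The proof is essentially a bookkeeping exercise; the only nonobvious ingredient is the algebraic independence of $\phi_1,\ldots,\phi_n$, which is the reason we need to invoke Lemma \ref{dindependence}. The passage between formal (power-series) identities and identities of analytic functions on a small neighborhood of the origin is routine via the isomorphism $^a\colon A_{\mathbb{R},n}\to\mathcal{O}_{\mathbb{R},n}$ recalled in Section \ref{AAT}, so no genuine obstacle arises.
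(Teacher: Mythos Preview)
Your proof is correct and follows essentially the same approach as the paper: both directions amount to the substitution $x=\phi(u)$, $y=\phi(v)$ combined with the characterization of Nash maps as analytic maps whose coordinates are algebraic over $\mathbb{R}(\mathrm{id})$. The only minor difference is that for the algebraic independence of $\phi_1,\ldots,\phi_n$ you invoke Lemma~\ref{dindependence}, whereas the paper simply notes that $\phi$ is a diffeomorphism (so its image is open and any polynomial relation among the $\phi_i$ would force the polynomial to vanish on an open set of $\mathbb{R}^n$).
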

\begin{proof}
$(1)$ implies $(2)$:
By hypothesis $\phi (U')$ is semialgebraic, since it is the projection of the domain of a semialgebraic map. 
Fix $i\in \{1,\ldots ,n\}$.
As we have mentioned in the definition of Nash map, this hypothesis implies that there exists 
$P_i\in \mathbb{R}[X_1,\ldots ,X_{2n+1}]$, $P_i\neq 0$, such that
\[
P_i(x_1,\ldots ,x_n,y_1,\ldots ,y_n,\phi _i(\phi ^{-1}(x) + \phi ^{-1}(y)))\equiv 0 \text{ on } \phi (U')\times \phi (U')
\]
where $x:=(x_1,\ldots ,x_n)$ and $y:=(y_1,\ldots ,y_n)$.
Since $\phi$ is a diffeomorphism, letting $u:=\phi ^{-1} (x)$ and $v:=\phi ^{-1}(y)$ we deduce that 
\[
P_i(\phi _1(u),\ldots ,\phi _n(u),\phi _1(v),\ldots ,\phi _n(v),\phi _i(u+v))\equiv 0 \text{ on } U'\times U'.
\]
Since $\phi$ is a diffeomorphism, the coordinate functions $\phi _1,\ldots ,\phi _n$ are clearly algebraically independent.
So $\phi$ admits an AAT.

\smallskip

$(2)$ implies $(1)$:
Fix $i\in \{1,\ldots ,n\}$.
If $\phi$ admits an AAT then there exists $P_i\in \mathbb{R}[X_1,\ldots ,X_{2n+1}]$, $P_i\neq 0$, such that
\[
P_i(\phi _1(u),\ldots ,\phi _n(u),\phi _1(v),\ldots ,\phi _n(v),\phi _i(u+v))\equiv 0 \text{ on } U'\times U'
\]
for some open neighborhood of the identity $U'\subset U$.
Since $\phi$ is a diffeomorphism we can let $x:=\phi (u)$ and $y:=\phi (v)$ and argue as before once we shrink $U$ to make it 
semialgebraic.
\end{proof}

We can now justify the notation of $(\mathbb{R}^n,+,f)$ given in the introduction for a locally Nash group structure on $(\mathbb{R}^n,+)$. Indeed, if $f:\mathbb{C}^n\rightarrow \mathbb{C}^n$ is a real meromorphic map such that
\begin{itemize}
 \item[1)] $f$ is real meromorphic and admits an AAT, and
 \item[2)] there exist $k\in \mathbb{R}^n$ and an open neighborhood $U\subset \mathbb{R}^n$ of $0$ such that
\[
\psi : U\rightarrow \mathbb{R}^n: u\mapsto \psi(u):=f(u+k) 
\]
is an analytic diffeomorphism,
\end{itemize}
then by Lemma \ref{AAT-star} there exists an open neighborhood $V\subset U$ of $0$ such that  $(\mathbb{R}^n,+,\psi|_V)$ is a 
locally Nash group. Note that $f$ satisfies  1) and 2) here if and only if it satisfies 1) and 2) in the introduction.

It remains to check that the locally Nash group structure is independent of  $k$ and the domains $U$ and $V$, that is, we 
have to show that given a real meromorphic map $f:\mathbb{C}^n\rightarrow \mathbb{C}^n$ admitting an AAT and given $k_1,k_2\in \mathbb{R}^n$ such that
\[
\psi_1: U_1\rightarrow \mathbb{R}^n: u\mapsto f(u+k_1), \quad \psi_2: U_2\rightarrow \mathbb{R}^n: u\mapsto f(u+k_2),
\]
satisfy conditions 2) above, we have that $(\mathbb{R}^n,+,\psi_1|_{V_1})$ and $(\mathbb{R}^n,+,\psi_2|_{V_2})$ are isomorphic as 
locally Nash groups (where $V_1\subset U_1$ and $V_2\subset U_2$ are given by Lemma \ref{AAT-star}).
By Lemma \ref{algebraic for functions}.$(1)$, $\psi_1$ is algebraic over $\mathbb{C}(\psi_2)$.
Since both $\psi_1$ and $\psi_2$ are real analytic maps, $\psi_1$ is algebraic over $\mathbb{R}(\psi_2)$ on some 
neighborhood of $0$ and hence by Proposition \ref{compatibilityAut} the identity map is a locally Nash isomorphism between
$(\mathbb{R}^n,+,\psi_1|_{V_1})$ and $(\mathbb{R}^n,+,\psi_2|_{V_2})$.

\medskip

\emph{ Henceforth when we write $(\mathbb{R}^n,+f)$ where $f:\mathbb{C}^n\rightarrow \mathbb{C}^n$ is a real meromorphic function that admits an AAT, we are also assuming that $f$ satisfies property 2) above.}

\medskip

This convention is useful since now we can denote by $(\mathbb{R},+,\wp _{<1,i>}(x))$ the locally Nash
group $(\mathbb{R},+,\wp _{<1,i>}(x+a)|_U)$ where $U$ is a sufficiently small neighborhood of the identity
and $a\in \mathbb{R}$ is also sufficiently small.
We note that without this convention the notation $(\mathbb{R},+,\wp _{<1,i>}(x))$ would not make sense, since the map 
$\wp _{<1,i>}(x)$ is not even a local diffeomorphism at $0$.

\medskip

Now we are ready to prove one of the main results of the paper.

\begin{theorem}\label{T1}
Every simply connected $n$-dimensional abelian locally Nash group is locally Nash isomorphic to some
$(\mathbb{R}^n,+, f )$, where $f:\mathbb{C}^n\rightarrow \mathbb{C}^n$ is a real meromorphic map admitting an AAT.
\end{theorem}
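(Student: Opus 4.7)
Every simply connected abelian Lie group of dimension $n$ is analytically isomorphic to $(\mathbb{R}^n,+)$, so by transporting the Nash atlas $\mathcal{A}$ along such an isomorphism I may assume the group itself is $(\mathbb{R}^n,+)$ equipped with some Nash atlas. Fix a chart of the identity $(U,\phi)\in\mathcal{A}$. Because addition is a locally Nash map, Proposition~\ref{characterization of locally Nash maps}.$(2)$ applied to the product chart $(U\times U,(\phi,\phi))$ yields condition $(i)$ of Fact~\ref{compatibility0} for $(U,\phi)$, and Lemma~\ref{AAT-star} then implies that the Taylor series $^t\phi\in M_{\mathbb{R},n}^n$ admits an AAT.

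I now apply Theorem~\ref{T2} with $\mathbb{K}=\mathbb{R}$ to $^t\phi$, producing $\psi=(\psi_1,\ldots,\psi_n)\in M_{\mathbb{R},n}^n$ which admits an AAT, is algebraic over $\mathbb{R}(^t\phi)$, and each of whose coordinates is a quotient $g_i/h_i$ of real power series converging on all of $\mathbb{C}^n$. The map $f\colon\mathbb{C}^n\to\mathbb{C}^n$ with coordinates $g_i/h_i$ is thus a real meromorphic map whose germ at $0$ is $\psi$ and which admits an AAT.

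The third step is to pick $k\in\mathbb{R}^n$ so that $u\mapsto f(u+k)$ is an analytic diffeomorphism on a neighborhood of $0$. Since $\phi$ is a diffeomorphism at $0$, $d\phi_1,\ldots,d\phi_n$ are linearly independent over $M_{\mathbb{R},n}$. Since $\psi$ is algebraic over $\mathbb{R}(^t\phi)$ and both tuples have transcendence degree $n$ over $\mathbb{R}$ (the $\psi_i$ are algebraically independent because they admit an AAT), it follows that $^t\phi$ is in turn algebraic over $\mathbb{R}(\psi)$, so Lemma~\ref{dindependence}.$(2)$ gives the independence of $d\psi_1,\ldots,d\psi_n$. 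Hence the Jacobian $\det(\partial_{u_j}\psi_i)$ is a nonzero element of $M_{\mathbb{R},n}$, and its zero/pole locus is a proper real analytic subset of $\mathbb{R}^n$. I choose $k$ in the complement of that locus, arbitrarily close to $0$ and inside the convergence disk of the fixed representatives of $\psi$. Setting $\tilde\psi(u):=f(u+k)$, Lemma~\ref{AAT-star} then certifies that $(\mathbb{R}^n,+,f)=(\mathbb{R}^n,+,\tilde\psi|_V)$ is a locally Nash group for a suitable small open neighborhood $V$ of $0$.

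Finally I invoke Proposition~\ref{compatibilityAut} with $\alpha$ the identity map to obtain the desired locally Nash isomorphism $(\mathbb{R}^n,+,\phi|_V)\to(\mathbb{R}^n,+,f)$: it suffices to verify that $\tilde\psi$ is algebraic over $\mathbb{R}(\phi)$ on a neighborhood of $0$. Applying Corollary~\ref{algebraic} to $\psi$ (which admits an AAT) yields that $\psi(u+k)$ is algebraic over $\mathbb{R}(\psi)$, as $k$ lies in the convergence disk of $\psi$; combining this with the algebraicity of $\psi$ over $\mathbb{R}(^t\phi)$ provided by Theorem~\ref{T2} produces the required algebraic relation between $\tilde\psi$ and $\phi$. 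The principal difficulty is arranging $k$ to simultaneously satisfy both constraints---being a regular value of $\psi$ and remaining in the small convergence disk where Corollary~\ref{algebraic} applies---which is possible precisely because the Jacobian non-degeneracy locus is open and dense accumulating at $0$.
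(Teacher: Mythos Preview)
Your argument is correct and follows the same path as the paper's proof: transport the Nash atlas to $(\mathbb{R}^n,+)$, extract an AAT from a chart of the identity via Lemma~\ref{AAT-star}, globalize using Theorem~\ref{T2}, use Lemma~\ref{dindependence} to find a point where the Jacobian is invertible, and conclude with Proposition~\ref{compatibilityAut}.

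One remark: what you call ``the principal difficulty'' is not a difficulty at all. Theorem~\ref{T2}.(2) guarantees that each $\psi_i$ is a quotient of power series convergent on \emph{all} of $\mathbb{C}^n$, so in Corollary~\ref{algebraic} you may take $\epsilon$ arbitrarily large and the constraint $k\in U_{\mathbb{R},n}(\epsilon)$ is vacuous. The paper exploits this by citing instead the functorial restatement, Corollary~\ref{algebraic for functions}.(1), which applies to global meromorphic maps and yields directly that $f(u+k)$ is algebraic over $\mathbb{C}(f(u))$ for every $k\in\mathbb{C}^n$; there is then no need to keep $k$ near the origin, and the choice of $k$ is governed only by the Jacobian condition.
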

\begin{proof}
Let $(G,\cdot)$ be a simply connected $n$-dimensional abelian locally Nash group equipped with a Nash atlas 
$\mathcal{B}:=\{(W_i,\phi _i)\}_{i\in I}$. In particular $G$ is an analytic group with this atlas and therefore there exists an isomorphism of analytic groups
\[
\rho :(G,\cdot) \rightarrow (\mathbb{R}^n ,+),
\]
where $(\mathbb{R}^n,+)$ is equipped with its unique analytic group structure, the standard one (see e.g. \cite[2.19]{Adams}).
Since $\mathcal{B}$ is a Nash atlas for $(G,\cdot )$ we have that
\[
\mathcal{A}:=\{ \rho(W_i), \phi _i\circ \rho^{-1}\}_{i\in I}
\]
is a Nash atlas for $(\mathbb{R} ^n , +)$ compatible with its standard analytic structure.
Moreover, $(G,\cdot)$ equipped with $\mathcal{B}$ is clearly locally Nash 
isomorphic to $(\mathbb{R}^n,+)$ equipped with $\mathcal{A}$.

Now, consider a chart of the identity $(U,\phi)\in \mathcal{A}$.
Firstly, note that as analytic chart $(U,\phi)$ must be compatible with the standard analytic structure of 
$(\mathbb{R}^n,+)$ and hence $\phi$ is an analytic diffeomorphism.
Also, being a chart of a locally Nash group structure, it satisfies condition $(1)$ of Lemma \ref{AAT-star} and hence $\phi$ admits an AAT.
Now, we apply Theorem \ref{T2} to ${^t} \phi$, the power series expansion of $\phi$ at $0$, to obtain 
$\psi :=(\psi _1,\ldots ,\psi _n)\in M_{\mathbb{R},n}^n$ convergent in $\mathbb{C}^n$ and admitting an AAT 
such that ${^t} \phi$ is algebraic over $\mathbb{R}(\psi)$. Note that since $\phi$ is an analytic diffeomorphism,
\[
{\phi} _*:T_0 U \rightarrow T_{{\phi} (0)} {\phi} (U)
\]
is an isomorphism of vectorial spaces. Hence ${d} ({^t {\phi} _1}),\ldots ,{d} ({^t {\phi} _n})$ are linearly independent over $M_{\mathbb{R},n}$.
In particular, by Lemma \ref{dindependence}.$(2)$, ${d} \psi _1,\ldots ,{d} \psi _n$ are also linearly independent over $M_{\mathbb{R},n}$.

Consider the real meromorphic function
$$f :\mathbb{C}^n\rightarrow \mathbb{C}^n:u\mapsto f(u):={^a\psi(u)}$$
which admits an AAT by definition. We first show that there exists $c\in \mathbb{R}^n$ and an open neighborhood $U'\subset \mathbb{R}^n$ of $0$ such that
\[
\varphi:U'\rightarrow \mathbb{R}^n: u\mapsto \varphi(u):=f(u+c)
\]
is an analytic diffeomorphism onto its image, so we will have a locally Nash group structure $(\mathbb{R}^n,+,f)$ and shrinking $U'$ if neccesary we may assume that $(U',\varphi)$ is one of its charts.
Indeed,  by Lemma \ref{pole}.$(1)$ there exists an open dense subset 
$W\subset \mathbb{R}^n$ such that 
\[
f|_W:W\rightarrow \mathbb{R}^n
\]
is analytic. Let $\Delta$ denote the determinant of the Jacobian of $f|_W$.
Since ${d} \psi _1,\ldots ,{d} \psi _n$ are linearly independent over $M_{\mathbb{R},n}$, 
$\Delta$ is not identically zero on $W$ and hence there exists $c\in U\cap W$ such that $\Delta (c)\neq 0$. 
So $f$ is a local diffeomorphism at $c$ and hence there exists an open neighborhood $U'\subset \mathbb{R}^n$ of $0$ such that
$\varphi:U'\rightarrow \mathbb{R}^n: u\mapsto \varphi(u):=f(u+c)$ is a diffeomorphism onto its image, as required.

Finally, by Proposition \ref{algebraic for functions} we have that $f$ is algebraic over $\mathbb{C}(f(u+k))$ and therefore $\phi$ is algebraic over $\mathbb{R}(\varphi)$ on a sufficiently small open neighborhood of $0$, so that by Proposition \ref{compatibilityAut} the 
identity map from $(\mathbb{R}^n,+,\phi|_U )$ to $(\mathbb{R}^n,+,\, \varphi|_{U'} )$ is a locally Nash isomorphism.\end{proof}

We point out that Proposition \ref{compatibilityAut} gives a criterion to decide whether two locally Nash structures on $(\mathbb{R}^n,+)$ are locally Nash isomorphic.

\begin{corollary}\label{compatibilityAut.1}
Let $(\mathbb{R}^n,+, f)$ and $(\mathbb{R}^n,+, g)$ be locally Nash groups, where $f,g:\mathbb{C}^n\rightarrow \mathbb{C}^n$ are 
real meromorphic maps admitting an AAT.
Then, they are isomorphic as locally Nash groups if and only if there exists $\alpha \in GL_n(\mathbb{R})$ such that 
$g\circ \alpha$ is algebraic over $\mathbb{R}(f)$.  
\end{corollary}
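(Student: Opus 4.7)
The first step is to observe that any isomorphism $\alpha : (\mathbb{R}^n,+,f)\to (\mathbb{R}^n,+,g)$ of locally Nash groups is in particular an analytic group automorphism of $(\mathbb{R}^n,+)$ equipped with its unique standard analytic group structure. Since every continuous group automorphism of $(\mathbb{R}^n,+)$ is $\mathbb{R}$-linear, this forces $\alpha\in GL_n(\mathbb{R})$. Conversely, any $\alpha\in GL_n(\mathbb{R})$ is already an analytic group isomorphism, so the question reduces to deciding for which $\alpha\in GL_n(\mathbb{R})$ the map is a locally Nash isomorphism, a question answered by Proposition \ref{compatibilityAut}.

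To invoke that proposition, we use charts of the identity obtained from the convention established after Lemma \ref{AAT-star}: namely, $(V_1,\phi)$ and $(V_2,\psi)$ with $\phi(u)=f(u+k_1)$ and $\psi(u)=g(u+k_2)$ for suitable $k_1,k_2\in \mathbb{R}^n$. Proposition \ref{compatibilityAut} then says that $\alpha$ is a locally Nash isomorphism if and only if $\psi\circ\alpha = g(\alpha u + k_2)$ is algebraic over $\mathbb{R}(\phi)=\mathbb{R}(f(u+k_1))$ on some neighborhood of $0$. The remaining task is to show that this condition is equivalent to the translation-free statement that $g\circ\alpha$ is algebraic over $\mathbb{R}(f)$.

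For that equivalence we combine two ingredients. First, both $f$ and $g\circ\alpha$ admit an AAT (the latter because conjugating by $\alpha\in GL_n(\mathbb{R})\subset GL_n(\mathbb{C})$ preserves AAT), so Corollary \ref{algebraic for functions}.(1) yields mutual algebraicity over $\mathbb{C}$ of $f$ with $f(u+k_1)$, and of $g\circ\alpha$ with $g(\alpha u + k_2)=(g\circ\alpha)(u+\alpha^{-1}k_2)$. Second, since the coordinates of $f$ are real meromorphic, $\mathbb{C}(f) = \mathbb{R}(f)[i]$ is an algebraic extension of $\mathbb{R}(f)$ of degree at most $2$, so any algebraic relation over $\mathbb{C}(f)$ descends to an algebraic relation over $\mathbb{R}(f)$, and analogously for $f(u+k_1)$. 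Chaining these algebraicities gives the equivalence between the translated relation over $\mathbb{R}(f(u+k_1))$ and the untranslated one over $\mathbb{R}(f)$, finishing the argument.

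The main obstacle is the bookkeeping between the three algebraic settings appearing in the excerpt: algebraicity of analytic germs at $0$ as used in Proposition \ref{compatibilityAut}, algebraicity of power series in $M_{\mathbb{R},n}$ as used throughout Section \ref{AAT}, and algebraicity of meromorphic maps on $\mathbb{C}^n$ as appearing in the statement. All three are equivalent for real meromorphic maps by the Taylor expansion isomorphism $^t$ and the identity principle, so once this identification is made the argument becomes a direct combination of Proposition \ref{compatibilityAut} and Corollary \ref{algebraic for functions} together with the elementary Galois descent from $\mathbb{C}(f)$ to $\mathbb{R}(f)$.
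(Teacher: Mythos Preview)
Your proposal is correct and follows essentially the same route as the paper: both reduce to Proposition \ref{compatibilityAut} via charts $\phi(u)=f(u+k_1)$, $\psi(u)=g(u+k_2)$, and both use Corollary \ref{algebraic for functions}.(1) to pass between the translated and untranslated algebraicity conditions. Your version is slightly more explicit about the descent from $\mathbb{C}(f)$ to $\mathbb{R}(f)$ (which the paper handles implicitly through the real meromorphic hypothesis) and phrases the translation step via $(g\circ\alpha)(u+\alpha^{-1}k_2)$ rather than composing the relation $g\sim g_{u+k_2}$ with $\alpha$, but these are cosmetic differences.
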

\begin{proof}
By hypothesis there exist $k_1\in \mathbb{R}^n$ and an open neighborhood of the identity $U$ of $\mathbb{R}^n$ such that 
$(\mathbb{R}^n,+, f)$ denotes $(\mathbb{R}^n,+,\phi |_U)$ where 
\[
\phi : U \rightarrow \mathbb{R}^n : u\mapsto f(u+k_1).
\]
Similarly there exist $k_2\in \mathbb{R}^n$ and an open neighborhood of the identity $V$ of $\mathbb{R}^n$ such that 
$(\mathbb{R}^n,+, g)$ denotes $(\mathbb{R}^n,+,\psi|_V)$ where 
\[
\psi : V\rightarrow \mathbb{R}^n : u\mapsto g(u+k_2).
\]
By Corollary \ref{algebraic for functions}.$(1)$ we have that $f_{u+k_1}:=f(u+k_1)$ is algebraic over $\mathbb{C}(f(u))$ and the other way around, and similarly for   $g_{u+k_2}:=g(u+k_2)$ and $g(u)$. In particular, for any $\alpha \in GL_n(\mathbb{R})$ we have that $g\circ \alpha$ is algebraic over $\mathbb{R}(f)$ if and only if 
 $g_{u+k_2}\circ \alpha$ is algebraic over $\mathbb{R}(f_{u+k_1})$.
 
We suppose first that $\alpha$ is an isomorphism of locally Nash groups
\[
\alpha : (\mathbb{R}^n,+, \phi |_U) \rightarrow (\mathbb{R}^n,+, \psi |_V). 
\]
Note that $\alpha \in GL_n(\mathbb{R})$.
Applying Proposition \ref{compatibilityAut} there exists $W\subset U\cap \alpha ^{-1}(V)$ such that
$\psi \circ \alpha$ is algebraic over $\mathbb{R}(\phi )$ on $W$. We deduce that $g_{u+k_2}\circ \alpha$ is algebraic over $\mathbb{R}(f_{u+k_1})$ and therefore $g\circ \alpha$ is algebraic over $\mathbb{R}(f)$.

We show the right to left implication.
Since  $g\circ \alpha$ is algebraic over $\mathbb{R}(f)$, it follows that $g_{u+k_2}\circ \alpha$ is algebraic over $\mathbb{R}(f_{u+k_1})$. Therefore $\psi \circ \alpha$ is algebraic over $\mathbb{R}(\psi)$ on a sufficiently small neighborhood of $0$. Finally, since $\alpha$ is a continuous isomorphism, we apply  Proposition \ref{compatibilityAut} to $\alpha$ and we obtain that 
$(\mathbb{R}^n,+, \phi |_U)$ and $(\mathbb{R}^n,+, \psi |_V)$ are isomorphic as locally Nash groups.
\end{proof}

\medskip

\subsection{Periods of meromorphic maps.} Finally, we introduce some invariants that will allow us to describe Nash atlas of $(\mathbb{R}^n,+)$.
Let $\Lambda$ be a discrete subgroup of $(\mathbb{C}^n,+)$.
Then there exist $r\leq 2n$ and $\lambda _1,\ldots ,\lambda _r\in \Lambda$ linearly independent over $\mathbb{R}$ such that 
\[
\Lambda = \mathbb{Z}\lambda _1 \oplus \ldots \oplus \mathbb{Z}\lambda _r.
\]
We call $r$ (the dimension of $\Lambda$ as a (free) $\mathbb{Z}$-module) the rank of $\Lambda$, which is independent of the chosen 
basis, and denote it $rank\, \Lambda$.
A discrete subgroup $\Lambda$ of $(\mathbb{C}^n,+)$ is a \emph{lattice} if $rank\, \Lambda =2n$.
We say that a subgroup $G<(\mathbb{C}^n,+)$ is a \emph{real subgroup} if $G =\overline{G}$.
We say $\Lambda$ is a \emph{real discrete subgroup} (resp. \emph{real lattice}) of $(\mathbb{C}^n,+)$ if $\Lambda$ is both 
a discrete subgroup (resp. lattice) of $(\mathbb{C}^n,+)$ and a real subgroup of $(\mathbb{C}^n,+)$.

The previous concepts are related to meromorphic maps as follows.
Given a meromorphic map $f:\mathbb{C}^n\rightarrow \mathbb{C}^m$, we define the \emph{group of periods of $f$} as
\[
\Lambda _f:=\{  a \in \mathbb{C}^n : f(u)=f(u+a)\}
\]
where $f(u)=f(u+a)$ means that if $f=g/h$ then $g(b)h(b+a)=h(b)g(b+a)$ for all $b\in \mathbb{C}^n$.
Note that $\Lambda _f$ is a subgroup of $(\mathbb{C}^n,+)$ that may not be discrete.
However, we have the following:

\begin{lemma}\label{discrete groups}
Let $f:\mathbb{C}^n\rightarrow \mathbb{C}^n$ be a meromorphic map.
\begin{enumerate}
\item[$(1)$] If $f$ is a local diffeomorphism at $0$ 
then $\Lambda _f$ is a discrete subgroup of $(\mathbb{C}^n,+)$.
\item[$(2)$] If $f$ is a real meromorphic map then $\Lambda _f$ is a real subgroup of $(\mathbb{C}^n,+)$.
\item[$(3)$] If $f$ is a real meromorphic map, $k\in \mathbb{R}^n$ and for some open neighborhood of the identity 
$U\subset \mathbb{R}^n$ the restriction of $f(u+k)$ to $U$ is an analytic diffeomorphism then $\Lambda _f$ 
is a real discrete subgroup of $(\mathbb{C}^n,+)$.
\item[$(4)$] If $\Lambda _f$ is a discrete subgroup of $(\mathbb{C}^n,+)$ and $\alpha \in GL_n(\mathbb{C})$ then 
$\Lambda _{f\circ \alpha}$ is a discrete subgroup of $(\mathbb{C}^n,+)$ with $rank\, \Lambda _{f\circ \alpha}=rank\, \Lambda _f$.
\end{enumerate}
\end{lemma}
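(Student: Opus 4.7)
The plan is to dispatch the four items essentially independently, deriving (3) from (1) and (2), and treating (4) by a direct calculation.

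For (1), I would use that a local diffeomorphism at $0$ is in particular injective on some open neighborhood $V\subset\mathbb{C}^n$ of $0$ on which $f$ is analytic. If $a\in\Lambda_f\cap V$, then the meromorphic identity $f(u)=f(u+a)$ holds on $\mathbb{C}^n$, so specializing at $u=0$ (a point of analyticity by hypothesis) gives $f(0)=f(a)$, and hence $a=0$ by injectivity. Thus $0$ is isolated in $\Lambda_f$; since $\Lambda_f$ is a subgroup of $(\mathbb{C}^n,+)$, translating this neighborhood shows that every point is isolated, which is exactly discreteness. For (2), I would write $f=g/h$ with $g,h\in\mathcal{O}_{\mathbb{R},n}(\mathbb{C}^n)$. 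Because $g$ and $h$ are real-valued on $\mathbb{R}^n$, the identity principle applied to the analytic function $u\mapsto g(u)-\overline{g(\overline{u})}$ (which vanishes on $\mathbb{R}^n$), and likewise for $h$, gives $g(u)=\overline{g(\overline{u})}$ on all of $\mathbb{C}^n$. The relation $a\in\Lambda_f$ is $g(u)h(u+a)=h(u)g(u+a)$; conjugating both sides and substituting $v=\overline{u}$ yields $g(v)h(v+\overline{a})=h(v)g(v+\overline{a})$, i.e.\ $\overline{a}\in\Lambda_f$, showing $\Lambda_f=\overline{\Lambda_f}$.

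Part (3) is then immediate: setting $\tilde f(u):=f(u+k)$ with $k\in\mathbb{R}^n$, $\tilde f$ remains real meromorphic (real translation preserves the real analyticity of $g$ and $h$), it is a local diffeomorphism at $0$ by hypothesis, and the change of variable $v=u+k$ shows $\Lambda_{\tilde f}=\Lambda_f$; applying (1) to $\tilde f$ yields discreteness and applying (2) to $f$ yields reality. For (4), a direct substitution gives $\Lambda_{f\circ\alpha}=\alpha^{-1}(\Lambda_f)$: indeed $a\in\Lambda_{f\circ\alpha}$ iff $f(\alpha u+\alpha a)=f(\alpha u)$, iff (by bijectivity of $\alpha$, after setting $v=\alpha u$) $\alpha a\in\Lambda_f$. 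Since $\alpha^{-1}$, viewed through $\mathbb{C}^n\cong\mathbb{R}^{2n}$, is an $\mathbb{R}$-linear homeomorphism, it maps discrete subgroups to discrete subgroups and preserves both $\mathbb{R}$-linear independence and the $\mathbb{Z}$-module structure, hence the rank.

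The main (and essentially only) delicate point is the correct interpretation of ``$f(u)=f(u+a)$'' as an identity of meromorphic functions and its safe specialization at concrete points; once one has a point at which both sides are simultaneously analytic — supplied by the local-diffeomorphism hypothesis in (1) and (3) — everything reduces to straightforward formal manipulations. The reality argument in (2) and the rank preservation in (4) rely only on the identity principle and basic linear algebra, so I do not anticipate a genuine obstacle here.
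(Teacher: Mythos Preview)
Your approach is essentially the same as the paper's, and items (1), (2), (4) are handled correctly. There is one small gap in (3): you write that $\tilde f$ ``is a local diffeomorphism at $0$ by hypothesis,'' but the hypothesis only says that the restriction of $\tilde f$ to an open set $U\subset\mathbb{R}^n$ is an analytic diffeomorphism onto its image in $\mathbb{R}^n$; what you need to apply (1) is that $\tilde f:\mathbb{C}^n\to\mathbb{C}^n$ is a local diffeomorphism at $0$, i.e.\ injectivity on a \emph{complex} neighborhood. The paper closes this gap explicitly: the real Jacobian of $\tilde f|_U$ at $0$ is nonzero, and since $\tilde f$ is real meromorphic this coincides with the complex Jacobian at $0$, so the inverse mapping theorem in $\mathbb{C}^n$ gives the required complex local diffeomorphism. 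Once you insert that one-line Jacobian argument, your proof matches the paper's.
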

\begin{proof}
$(1)$ Clearly $\Lambda _f$ is a subgroup of $(\mathbb{C}^n,+)$.
Suppose for a contradiction that $\Lambda _f$ is not discrete.
Then there exists an infinite sequence $\{ a_i:i\in \mathbb{N}\}$ of points of $\Lambda _f$ that converges to some 
$a\in \mathbb{C}^n$.
Take $r>0$ such that $f$ is injective and analytic in an open ball of radius $r$ centered at $0$, this can be done because $f$ is
a local diffeomorphism at $0$.
Take $N\in \mathbb{N}$ such that $\| a_i-a_N\| <r$ for all $i\geq N$.
Since $\Lambda _f$ is a subgroup of $(\mathbb{C}^n,+)$, $a_i-a_N\in \Lambda _f$ for all $i\in \mathbb{N}$.
This implies that $f(a_i-a_N)=f(0)$ for all $i\in \mathbb{N}$, which contradicts that $f$ is injective in the ball of radius $r$ 
centered at $0$.

$(2)$ Fix $\lambda \in \Lambda _f$.
By definition $f(u)=f(u+\lambda)$.
Hence, $f(\overline{u})=f(\overline{u}+\overline{\lambda})$ because $f$ is a real meromorphic function.
Therefore, $\overline{\lambda}\in \Lambda _f$.

$(3)$ We may assume that $k=0$.
Let $J$ be the determinant of the Jacobian of $f|_U$ at $0$.
Since $f^{-1}|_{f(U)}$ exists, $J\neq 0$.
Since the determinant of the Jacobian of $f$ at $0$ is also $J$, it is not $0$.
So by the inverse mapping theorem $f$ is a local diffeomorphism at $0$.
Hence by $(1)$ and $(2)$, $\Lambda _f$ is a real discrete subgroup of $(\mathbb{C}^n,+)$.

$(4)$ Take $r\leq 2n$ and $\lambda _1,\ldots ,\lambda _r\in \Lambda$ linearly independent over $\mathbb{R}$ such that 
\[
\Lambda _f= \mathbb{Z}\lambda _1 \oplus \ldots \oplus \mathbb{Z}\lambda _r.
\]
Then, $\alpha ^{-1}(\lambda _1),\ldots ,\alpha ^{-1}(\lambda _r)\in \Lambda$ are linearly independent over $\mathbb{R}$ and
\[
\Lambda _{f\circ \alpha}= \mathbb{Z}\alpha ^{-1}(\lambda _1) \oplus \ldots \oplus \mathbb{Z}\alpha ^{-1}(\lambda _r).
\]
\end{proof}

By Theorem \ref{T1} every locally Nash group structure on $(\mathbb{R}^n,+)$ is of the form $(\mathbb{R}^n,+,f)$ for a certain real meromorphic map $f:\mathbb{C}^n\rightarrow \mathbb{C}^n$ admitting an AAT. In Proposition \ref{different ranks} we will show  that $rank\, \Lambda _f$ is an invariant of the locally Nash isomorphism class. We first prove a technical lemma.

\begin{lemma}\label{algebraicity of periods} Let $f,g:\mathbb{C}^n\rightarrow \mathbb{C}^n$ be meromorphic maps such that 
$\Lambda _f$ and $\Lambda _g$ are discrete subgroups of $(\mathbb{C}^n,+)$.
If $g$ is algebraic over $\mathbb{C}(f)$ then there exists $a\in \mathbb{N}\setminus \{0\}$ such that $a\Lambda _f<\Lambda _g$, 
and in particular $rank\, \Lambda _f \leq rank\, \Lambda _g$.
Furthermore, if $g_1,\ldots ,g_n$ are algebraically independent over $\mathbb{C}$ then $rank\, \Lambda _f = rank\, \Lambda _g$.
\end{lemma}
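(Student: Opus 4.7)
The plan is to exploit the finiteness of Galois-type orbits of $g_i$ under translations by elements of $\Lambda_f$. Since $g$ is algebraic over $\mathbb{C}(f)$, for each $i \in \{1,\ldots,n\}$ there is a nonzero polynomial $P_i(X_1,\ldots,X_n,Y) \in \mathbb{C}[X_1,\ldots,X_n,Y]$ of some positive degree $d_i$ in $Y$ such that $P_i(f_1,\ldots,f_n,g_i) = 0$ identically. Fix $\lambda \in \Lambda_f$; substituting $u \mapsto u+\lambda$ and using $f(u+\lambda) = f(u)$, we get $P_i(f(u), g_i(u+\lambda)) = 0$. Hence $g_i(u+\lambda)$ is a root of the polynomial $P_i(f,Y) \in \mathbb{C}(f)[Y]$ in the field of meromorphic functions on $\mathbb{C}^n$, which has at most $d_i$ roots. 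Therefore the orbit $\{g_i(u+\lambda) : \lambda \in \Lambda_f\}$ is finite of cardinality at most $d_i$.

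For each $i$, let $H_i := \{\lambda \in \Lambda_f : g_i(u+\lambda) = g_i(u)\}$. Since $\lambda \mapsto g_i(u+\lambda)$ descends to a well-defined injection of $\Lambda_f/H_i$ into the finite set of roots above, the index $[\Lambda_f : H_i]$ is finite and at most $d_i$. As $\Lambda_f$ is a finitely generated free abelian group and $\Lambda_f/H_i$ is finite of order $a_i := [\Lambda_f : H_i]$, Lagrange's theorem applied to $\Lambda_f/H_i$ gives $a_i \lambda \in H_i$ for every $\lambda \in \Lambda_f$, that is, $a_i \Lambda_f \subset H_i$. Taking $a := a_1 \cdots a_n \in \mathbb{N}\setminus\{0\}$ we obtain
\[
a \Lambda_f \subset \bigcap_{i=1}^n H_i = \{\lambda \in \Lambda_f : g_i(u+\lambda) = g_i(u) \text{ for all } i\} = \Lambda_f \cap \Lambda_g \subset \Lambda_g,
\]
which is the first conclusion. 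Because $a \neq 0$ and $\Lambda_f$ is free abelian of finite rank, $a\Lambda_f$ has the same rank as $\Lambda_f$; since $a\Lambda_f$ embeds in the discrete group $\Lambda_g$, we deduce $rank\, \Lambda_f = rank\, (a\Lambda_f) \leq rank\, \Lambda_g$.

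For the final assertion, suppose $g_1,\ldots,g_n$ are algebraically independent over $\mathbb{C}$. Then $\mathbb{C}(g) \subset \mathbb{C}(f)^{\mathrm{alg}}$ forces $tr.deg._{\mathbb{C}}\mathbb{C}(f) \geq n$, but $\mathbb{C}(f)$ is generated by $n$ elements, so $f_1,\ldots,f_n$ are also algebraically independent and $\mathbb{C}(f)$ is algebraic over $\mathbb{C}(g)$, i.e.\ $f$ is algebraic over $\mathbb{C}(g)$. Applying the first part with the roles of $f$ and $g$ reversed yields some $b \in \mathbb{N}\setminus\{0\}$ with $b\Lambda_g \subset \Lambda_f$, whence $rank\, \Lambda_g \leq rank\, \Lambda_f$, completing the equality.

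The only subtle point is guaranteeing that $g_i(u+\lambda)$ and $f(u)$ live in a common field where the polynomial $P_i(f,Y)$ has at most $\deg_Y P_i$ roots; this is taken care of by working inside $\mathcal{M}_{\mathbb{C},n}(\mathbb{C}^n)$, which is indeed a field (by the Poincaré property of $\mathbb{C}^n$ recalled in Remark \ref{rmkPoincare}), so the bound on the number of roots is just the usual fact for polynomials over a field.
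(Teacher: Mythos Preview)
Your proof is correct and follows essentially the same approach as the paper: both exploit that each translate $g_i(u+\lambda)$ with $\lambda\in\Lambda_f$ is a root of the annihilating polynomial $P_i(f,Y)$, hence there are only finitely many such translates. The only difference is bookkeeping: you package the finiteness via the stabilizer subgroup $H_i$ and Lagrange's theorem, whereas the paper works coordinate-by-coordinate and then over a basis of $\Lambda_f$, using pigeonhole and least common multiples.
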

\begin{proof}
We prove the first clause.
We may assume that $\Lambda _f\neq \{0\}$.
Take $\lambda \in \Lambda _f\setminus \{0\}$ and fix $j\in \{1,\ldots ,n\}$.
Let $P_j(Z)$ be minimum polynomial of $g_j(u)$ over $\mathbb{C}(f(u))$.
Since $\lambda \in \Lambda _f$, $P_j(g_j(u+\ell \lambda))\equiv 0$ for each $\ell\in \mathbb{Z}$.
Since $P_j(Z)$ has a finite number of roots, there exist $\ell_1,\ell_2\in \mathbb{Z}$, $\ell_2>\ell_1$, such that 
$g_j(u+\ell_1\lambda )=g_j(u+\ell_2\lambda)$.
Let $a_j:=\ell_2-\ell_1\in \mathbb{N}\setminus \{0\}$.
Then $g_j(u)=g_j(u+a_j\lambda )$ and hence $g_j(u)=g_j(u+\ell a_j\lambda )$ for each $\ell \in \mathbb{Z}$.
Let $a$ be the least common multiple of $a_1,\ldots ,a_n$.
Then $g_j(u)=g_j(u+\ell a\lambda )$ for each $\ell\in \mathbb{Z}$ and each $j\in \{1,\ldots ,n\}$, so $a\lambda \in \Lambda _g$.
Let now $\{\lambda_1,\ldots ,\lambda_m\}$ be a basis for $\Lambda _f$.
Take $a$ again be the l.c.m. of the $a$'s such that $a\lambda _i\in \Lambda _g$.
Then, for this $a$ we have $a\lambda \in \Lambda _g$ for each $\lambda\in \Lambda _f$.
This also shows that $\Lambda _g$ contains at least $rank\, \Lambda _f$ linearly independent vectors over $\mathbb{R}$ and hence 
$rank\,\Lambda _f \leq rank\, \Lambda _g$.

\smallskip

The other clause follows by symmetry since if $g_1,\ldots ,g_n$ are algebraically independent over $\mathbb{C}$
then $f$ is algebraic over $\mathbb{C}(g)$.
\end{proof}

The next corollary of Lemma \ref{algebraicity of periods} will be useful to study Weierstrass $\wp$-functions in the context of the one-dimensional classification of locally Nash groups.

\begin{corollary}\label{algebraicity of periods2} Let $f,g:\mathbb{C}^n\rightarrow \mathbb{C}^n$ be meromorphic maps such that both 
$\Lambda _f$ and $\Lambda _g$ are discrete subgroups of $\mathbb{C}^n$.
If $g$ is algebraic over $\mathbb{C}(f)$ then there exists a discrete subgroup $\Lambda$ of $(\mathbb{C}^n,+)$ such that 
$rank\, \Lambda =rank\, \Lambda _f$ and both $\Lambda <\Lambda _f$ and $\Lambda <\Lambda _g$.
Furthermore, if $\Lambda _f$ is a real discrete subgroup then we can take $\Lambda$ to be a real discrete subgroup.
\end{corollary}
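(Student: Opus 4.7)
The plan is to apply Lemma \ref{algebraicity of periods} directly: it gives us an integer $a\in \mathbb{N}\setminus \{0\}$ with $a\Lambda_f < \Lambda_g$, and I would simply take $\Lambda := a\Lambda_f$. This is the obvious candidate, and essentially everything we need follows from the fact that multiplication by a nonzero integer on a discrete subgroup of $(\mathbb{C}^n,+)$ preserves both discreteness and rank.

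More precisely, first I would observe that $\Lambda = a\Lambda_f$ is a subgroup of $\Lambda_f$, hence a discrete subgroup of $(\mathbb{C}^n,+)$, and clearly $\Lambda < \Lambda_g$ by Lemma \ref{algebraicity of periods}. For the rank, if $\{\lambda_1,\ldots,\lambda_r\}$ is a $\mathbb{Z}$-basis of $\Lambda_f$ then $\{a\lambda_1,\ldots,a\lambda_r\}$ is an $\mathbb{R}$-linearly independent $\mathbb{Z}$-basis of $a\Lambda_f$, so $rank\,\Lambda = r = rank\,\Lambda_f$.

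For the final clause, if $\Lambda_f$ is real, that is $\overline{\Lambda_f} = \Lambda_f$, then for any $\mu \in a\Lambda_f$, write $\mu = a\lambda$ with $\lambda \in \Lambda_f$; since $\overline{\lambda} \in \Lambda_f$, we get $\overline{\mu} = a\overline{\lambda} \in a\Lambda_f$. Hence $\Lambda$ is a real discrete subgroup.

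There is no real obstacle here, the work has already been done in Lemma \ref{algebraicity of periods}; the only thing to verify are the three elementary properties of $a\Lambda_f$ listed above.
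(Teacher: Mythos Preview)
Your proposal is correct and takes exactly the same approach as the paper: the paper's proof is simply to invoke Lemma~\ref{algebraicity of periods} to obtain $a\in\mathbb{N}\setminus\{0\}$ with $a\Lambda_f<\Lambda_g$ and set $\Lambda=a\Lambda_f$. You have merely supplied the (routine) verifications of discreteness, rank, and reality that the paper leaves implicit.
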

\begin{proof}
By Lemma \ref{algebraicity of periods} there exists $a\in \mathbb{N}\setminus \{0\}$ such that $a\Lambda _f<\Lambda _g$.
It suffices to take $\Lambda =a\Lambda _f$.
\end{proof}

\begin{proposition}\label{different ranks}
Let $(\mathbb{R}^n,+,f)$ and $(\mathbb{R}^n,+,g)$ be locally Nash groups, where $f,g:\mathbb{C}^n\rightarrow \mathbb{C}^n$ 
are real meromorphic maps admitting an AAT.
If $(\mathbb{R}^n,+,f)$ and $(\mathbb{R}^n,+,g)$ are isomorphic as locally Nash groups then 
$rank\, \Lambda _f=rank\, \Lambda _g$.
\end{proposition}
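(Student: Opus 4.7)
The plan is to reduce the statement to a direct application of Lemma \ref{algebraicity of periods} by translating the locally Nash isomorphism into an algebraic relation between $f$ and a suitable modification of $g$, and then checking that the ranks of the period groups are preserved under the modifications used along the way.

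First I would invoke Corollary \ref{compatibilityAut.1}: since $(\mathbb{R}^n,+,f)$ and $(\mathbb{R}^n,+,g)$ are isomorphic as locally Nash groups, there exists $\alpha\in GL_n(\mathbb{R})$ such that $g\circ\alpha$ is algebraic over $\mathbb{R}(f)$, and hence also over $\mathbb{C}(f)$. Both $f$ and $g$ are real meromorphic maps admitting an AAT with the property stated in the convention preceding Theorem \ref{T1}, so by Lemma \ref{discrete groups}.(3) the period groups $\Lambda_f$ and $\Lambda_g$ are (real) discrete subgroups of $(\mathbb{C}^n,+)$. Since $\alpha\in GL_n(\mathbb{R})\subset GL_n(\mathbb{C})$, Lemma \ref{discrete groups}.(4) then gives that $\Lambda_{g\circ\alpha}$ is discrete with $\mathrm{rank}\,\Lambda_{g\circ\alpha}=\mathrm{rank}\,\Lambda_g$.

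Next I would verify that the coordinate functions of $g\circ\alpha$ are algebraically independent over $\mathbb{C}$, so that the "furthermore" clause of Lemma \ref{algebraicity of periods} applies. Since $g$ admits an AAT, $g_1,\ldots,g_n$ are algebraically independent over $\mathbb{C}$ by definition. If $P\in\mathbb{C}[X_1,\ldots,X_n]$ satisfied $P(g_1\circ\alpha,\ldots,g_n\circ\alpha)\equiv 0$, then precomposing with $\alpha^{-1}$ (a bijection of $\mathbb{C}^n$) would give $P(g_1,\ldots,g_n)\equiv 0$, forcing $P=0$.

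Finally, with $f$ and $g\circ\alpha$ meromorphic maps $\mathbb{C}^n\to\mathbb{C}^n$ such that $\Lambda_f$ and $\Lambda_{g\circ\alpha}$ are discrete, $g\circ\alpha$ is algebraic over $\mathbb{C}(f)$, and the coordinates of $g\circ\alpha$ are algebraically independent over $\mathbb{C}$, the "furthermore" clause of Lemma \ref{algebraicity of periods} yields $\mathrm{rank}\,\Lambda_f=\mathrm{rank}\,\Lambda_{g\circ\alpha}$. Combined with the previous equality $\mathrm{rank}\,\Lambda_{g\circ\alpha}=\mathrm{rank}\,\Lambda_g$, this gives the desired conclusion $\mathrm{rank}\,\Lambda_f=\mathrm{rank}\,\Lambda_g$. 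I do not anticipate a real obstacle here: the proposition is essentially a straightforward assembly of Corollary \ref{compatibilityAut.1}, Lemma \ref{discrete groups}, and Lemma \ref{algebraicity of periods}; the only mildly delicate point is the bookkeeping to see that algebraic independence and rank of periods transfer correctly under the linear change of variables $\alpha$.
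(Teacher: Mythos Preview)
Your proof is correct and follows essentially the same route as the paper: invoke Corollary \ref{compatibilityAut.1} to obtain $\alpha\in GL_n(\mathbb{R})$ with $g\circ\alpha$ algebraic over $\mathbb{R}(f)$, use Lemma \ref{discrete groups}.(3) and (4) to ensure the period groups are discrete and that $\mathrm{rank}\,\Lambda_{g\circ\alpha}=\mathrm{rank}\,\Lambda_g$, and then apply Lemma \ref{algebraicity of periods} to conclude. Your explicit verification that the coordinates of $g\circ\alpha$ remain algebraically independent is a detail the paper leaves implicit but which is indeed needed for the ``furthermore'' clause of Lemma \ref{algebraicity of periods}.
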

\begin{proof}
By Corollary \ref{compatibilityAut.1} there exists $\alpha \in GL_n(\mathbb{R})$ such that $g\circ \alpha$ is algebraic 
over $\mathbb{R}(f)$.
We note that by Lemma \ref{discrete groups}.$(3)$ both $\Lambda _g$ and $\Lambda _f$ are real discrete subgroups of 
$(\mathbb{C}^n,+)$.
By Lemma \ref{discrete groups}.$(4)$, $\Lambda _{g\circ \alpha }$ is also a discrete subgroup of $(\mathbb{C}^n,+)$,
with $rank\, \Lambda _{g\circ \alpha}=rank\, \Lambda _g$.
Now, by Lemma \ref{algebraicity of periods}, $rank\, \Lambda _{g\circ \alpha}=rank\, \Lambda _f$ and hence 
$rank\, \Lambda _f=rank\, \Lambda _g$.
\end{proof}

Lemma \ref{algebraicity of periods} leads us to the the following definition.
Let $\mathbb{L}$ be a field of meromorphic functions from $\mathbb{C}^n$ to $\mathbb{C}$ of transcendence degree 
$n$ over $\mathbb{C}$.
Suppose that there exists $f:=(f_1,\ldots ,f_n):\mathbb{C}^n\rightarrow \mathbb{C}^n$ such that $\{ f_1,\ldots ,f_n\}$ is a 
transcendence basis of $\mathbb{L}$ over $\mathbb{C}$ and $\Lambda _f$ is a discrete subgroup of $(\mathbb{C}^n,+)$.
Then, by Lemma \ref{algebraicity of periods}, for all $g:=(g_1,\ldots ,g_n):\mathbb{C}^n\rightarrow \mathbb{C}^n$ such that 
$\{ g_1,\ldots ,g_n\}$ is a transcendence basis of $\mathbb{L}$ over $\mathbb{C}$ we have that $\Lambda _g$ is a discrete 
subgroup of $(\mathbb{C},^n)$ with $rank \,\Lambda _g=rank \,\Lambda _f$.
Hence, we introduce the following notation, that will be useful in the proof of Theorem \ref{T3}.

\begin{definition}\label{Z-rank}
\emph{
Let $\mathbb{L}$ be a field of meromorphic functions from $\mathbb{C}^n$ to $\mathbb{C}$ of transcendence degree 
$n$ over $\mathbb{C}$.
Suppose that there exists $f:=(f_1,\ldots ,f_n):\mathbb{C}^n\rightarrow \mathbb{C}^n$ such that $\{ f_1,\ldots ,f_n\}$ is a 
transcendence basis of $\mathbb{L}$ over $\mathbb{C}$ and $\Lambda _f$ is a discrete subgroup of $(\mathbb{C}^n,+)$.
Then, we say that $\mathbb{Z}$--$rank\, \mathbb{L}=m$ if $rank\,  \Lambda _f=m$.
Otherwise, we say that the $\mathbb{Z}$--$rank$ of $\mathbb{L}$ is not defined.
Let $\mathcal{P}=\{ \mathbb{L}_{\gamma}: \gamma \in \Gamma  \}$ where each $\mathbb{L}_{\gamma}$ is a field of meromorphic 
functions from $\mathbb{C}^n$ to $\mathbb{C}$ of transcendence degree $n$ over $\mathbb{C}$.
We say that $\mathbb{Z}$--$rank\, \mathcal{P}=m$ if $\mathbb{Z}$--$rank\, \mathbb{L}_{\gamma}=m$ for every $\gamma\in \Gamma$.
}
\end{definition}

\section{Two-dimensional simply connected abelian locally Nash groups.}\label{section2d}

In this section we will give a description of the two-dimensional simply connected abelian locally Nash groups, 
which is based on a theorem of Painlev\'e published in \cite{Painleve}.
Since Painlev\'e wrote \cite{Painleve} in $1902$, some of its notation is outdated.
We proceed to introduce and clarify its notation.

\medskip

For Painlev\'e a meromorphic map $f:\mathbb{C}^n\rightarrow \mathbb{C}^n$ admits an algebraic addition theorem if and only if the 
coordinate functions of $f$ are functionally independent and $f$ admits an AAT (in our sense). Any $n$ functions are functionally independent if {\itshape ils ne sont li\'ees par aucune relation identique}, see the footstep 
note of the first page of \cite{Painleve}. With this definition Painlev\'e is refering to the classical functional independence, 
see for example \cite[Definition 3]{Newns} for a detailed treatment.
Another characterization  of functional independence which will be more convenient for our purposes is the following 
(see \cite[Proposition 1]{Newns}). Let $\mathbb{K}$ be $\mathbb{R}$ or $\mathbb{C}$, we say that $f_1,\ldots ,f_n:\mathbb{K}^n\rightarrow \mathbb{K}$ 
are \emph{functionally independent} if the range of  $f :=(f_1,\ldots ,f_n):\mathbb{K}^n\rightarrow \mathbb{K}^n$ 
has an interior point in $\mathbb{K}^n$. We will apply Painlev\'e's results to meromorphic maps associated to translations of charts of the identity of locally Nash groups, which are  clearly functionally independent.

\medskip

To state Painlev\'e's results we introduce the following notation (see also \cite[Ch. 5 \textsection 6]{Siegel}). Recall that a meromorphic function $f$ is \emph{degenerate} if $\Lambda _f$ is not a discrete subgroup of $\mathbb{C}^n$. Let $\Lambda$ be a lattice of $(\mathbb{C}^n,+)$.
We say a meromorphic function $f:\mathbb{C}^n\rightarrow \mathbb{C}$ is an \emph{abelian function corresponding 
to $\Lambda$} if $\Lambda _f>\Lambda$. The abelian functions corresponding to $\Lambda$ form a field that we denote $\mathbb{C}(\Lambda )$.
Clearly $\mathbb{C}(\Lambda)$ contains degenerate functions, for example all the constants.
We say $\mathbb{C}(\Lambda )$ is \emph{nondegenerate} if it contains at least one function that is not degenerate.
We note that depending on $\Lambda$, the transcendence degree of $\mathbb{C}(\Lambda )$ can be from $0$ to $n$.
However, $\mathbb{C}(\Lambda )$ is nondegenerate if and only if its transcendence degree over $\mathbb{C}$ is $n$ (see,
\cite[Ch. 5 \textsection 11 Theorems 5 and 6]{Siegel}).
Given a lattice $\Omega$ of $(\mathbb{C},+)$ we will consider the Weierstrass functions $\wp _{\Omega}$, $\sigma _{\Omega}$ and 
$\zeta _{\Omega}$ (see e.g \cite[Ch.III and IV]{Chandrasekharan}). 
Recall that
$$\begin{array}{lcl}
\sigma _{\Omega}(u) & = & u\prod _{\omega \in \Omega \setminus \{0\}}, 
\left( 1-\frac{u}{\omega}\right)\exp\left(\frac{u}{\omega}+\frac{1}{2}\left(\frac{u}{\omega}\right)^2\right), \vspace{1mm}\\
\zeta _{\Omega}(u) & = & \frac{\sigma '_{\Omega}(u)}{\sigma _{\Omega}(u)}, \vspace{1mm}\\
\wp _{\Omega}(u) & = & -\zeta '_{\Omega}(u).
\end{array}$$

Finally, we define the \emph{families the Painlev\'e's description} as follows:
\begin{enumerate}
 \item[] $\mathcal{P}_1 :=\{ \, \mathbb{C}(g_1\circ \alpha ) : \alpha \in GL_2(\mathbb{C})\, \}, \text{ where } g_1(u,v):=(u,v);$ \vspace{1mm}
 \item[] $\mathcal{P}_2 :=\{ \, \mathbb{C}(g_2\circ \alpha ) : \alpha \in GL_2(\mathbb{C})\, \}, \text{ where } g_2(u,v):=(e^u,v);$ \vspace{1mm}
 \item[] $\mathcal{P}_3 :=\{ \, \mathbb{C}(g_3\circ \alpha ) : \alpha \in GL_2(\mathbb{C})\, \}, \text{ where } g_3(u,v):=(e^u,e^v);$ \vspace{1mm}
 \item[] $\mathcal{P}_4 :=\{ \, \mathbb{C}(g_{4,a,\Omega }\circ \alpha ) : \alpha \in GL_2(\mathbb{C}),\, a \in \{0,1\},
\, \Omega \text{ is a lattice of } (\mathbb{C},+)\, \}$ 

where $g_{4,a,\Lambda}(u,v)=(\wp _{\Omega }(u),v-a \zeta _{\Omega } (u))$; \vspace{1mm}
  \item[] $\mathcal{P}_5 :=\{ \, \mathbb{C}(g_{5,a,\Omega}\circ \alpha ) : \alpha \in GL_2(\mathbb{C}),\, a \in \mathbb{C}, 
\,\Omega \text{ is a lattice of } (\mathbb{C},+)\, \}$ 

where $g_{5,a,\Omega }(u,v)=\left(\wp _{\Omega}(u),\frac{\sigma _\Omega (u-a)}{\sigma _\Omega (u)}e^v\right)$; and \vspace{1mm}
 \item[] $\mathcal{P}_6:=\{  \, \mathbb{C}(\Lambda ) : \Lambda \text{ is a lattice of } (\mathbb{C}^2,+),\,
tr.deg.\, _{\mathbb{C}}\mathbb{C}(\Lambda )=2 \}.$
\end{enumerate}

It can be checked that $g_{4,a,\Omega}$ is algebraic over $\mathbb{C}(g_{4,1,\Omega})$ for each $a\neq 0$, this is the reason why only
$a\in \{0,1\}$ are considered in the family $\mathcal{P}_4$.
Henceforth we keep the notation $g_1$, $g_2$, $g_3$, $g_{4,a,\Omega}$ and $g_{5,a,\Omega}$ exclusively for these mentioned functions.
In Theorem \ref{T3} we will show that these maps admit an AAT and therefore induce a locally Nash structure on $(\mathbb{R}^2,+)$.

\medskip

Now we can state the main result of \cite{Painleve}.
\begin{fact}[{Painlev\'e, \cite[Main Theorem]{Painleve}}]\label{TP}
If $f_1,f_2:\mathbb{C}^2\rightarrow \mathbb{C}$ are functionally independent meromorphic functions such that $f:=(f_1,f_2)$
admits an AAT then there exist $i\in \{1,\ldots ,6\}$ such that $f_1(u,v)$ and $f_2(u,v)$ are algebraic over one of the fields 
of the family $\mathcal{P}_i$.
\end{fact}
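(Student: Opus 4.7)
The plan is to reduce the problem to an analysis of the period group of $f$ and then invoke the structure theorem for commutative complex Lie groups. First I would apply Theorem \ref{T2} to $(f_1,f_2)$ to obtain a tuple $(\psi_0,\psi_1,\psi_2)$ with $\psi_1,\psi_2$ algebraically independent, $\psi_0$ algebraic over $\mathbb{C}(\psi_1,\psi_2)$, each $\psi_i$ the quotient of entire functions on $\mathbb{C}^2$, and such that every element of $\mathbb{C}(\psi_0,\psi_1,\psi_2)$ admits a genuinely rational addition formula in these coordinates. Since $(f_1,f_2)$, and hence $(\psi_1,\psi_2)$, is functionally independent, Lemma \ref{discrete groups} ensures that the period group $\Lambda:=\Lambda_{(\psi_1,\psi_2)}$ is a discrete subgroup of $(\mathbb{C}^2,+)$.

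The rational addition formula realizes $\mathbb{C}^2/\Lambda$ as a dense analytic subgroup of a connected commutative algebraic group $G$ of dimension $2$ over $\mathbb{C}$ whose function field contains $\mathbb{C}(\psi_0,\psi_1,\psi_2)$. Applying the Chevalley structure theorem, $G$ is an extension of an abelian variety $A$ of dimension $\delta$ by an affine commutative group of the form $(\mathbb{C}^*)^a\times \mathbb{C}^b$ with $\delta+a+b=2$; examining the exponential map of $G$ one sees that the rank $r$ of $\Lambda$ equals $2\delta+a$, so $r\in\{0,1,2,3,4\}$ and the triple $(\delta,a,b)$ is determined up to the split/nonsplit distinction in the presence of a nontrivial extension class.

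The proof finishes by a case analysis on $(\delta,a,b)$ matching each case with one of the families. The split cases $(0,0,2)$, $(0,1,1)$, $(0,2,0)$ correspond directly to $\mathcal{P}_1,\mathcal{P}_2,\mathcal{P}_3$, with coordinate functions pulled back via the linear, $(u,e^v)$, and $(e^u,e^v)$ parametrizations respectively; the case $(2,0,0)$ gives $\mathcal{P}_6$. The cases $(1,0,1)$ and $(1,1,0)$ involve extensions of an elliptic curve $E=\mathbb{C}/\Omega$ by $\mathbb{C}$ or by $\mathbb{C}^*$; classically these extensions are classified by a single scalar parameter, and the Weierstrass parametrizations $(u,v)\mapsto(\wp_\Omega(u),\, v-a\zeta_\Omega(u))$ and $(u,v)\mapsto(\wp_\Omega(u),\,\sigma_\Omega(u-a)/\sigma_\Omega(u)\cdot e^v)$ realize precisely the generators of $\mathcal{P}_4$ and $\mathcal{P}_5$ (where $a\in\{0,1\}$ distinguishes split from nonsplit in $\mathcal{P}_4$, and $a\in\mathbb{C}$ parametrizes the extension in $\mathcal{P}_5$). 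A final base change by some $\alpha\in GL_2(\mathbb{C})$ aligns $(\psi_1,\psi_2)$ with the generators of the chosen family, and since $f_1,f_2$ are algebraic over $\mathbb{C}(\psi_0,\psi_1,\psi_2)$ the algebraicity statement over $\mathbb{C}(g_i\circ\alpha)$ follows from Lemma \ref{elimination}.

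The main obstacle is passing from the rational addition law on a germ to the algebraic group $G$ rigorously: this requires a Weil-type ``group chunk'' theorem producing an algebraic group from generically defined rational group operations, together with the fact that the resulting $G$ really does capture $\mathbb{C}(\psi_0,\psi_1,\psi_2)$ via pullback. Once $G$ is in hand, the remaining work is the explicit computation of $\mathrm{Ext}^1(E,\mathbb{C})$ and $\mathrm{Ext}^1(E,\mathbb{C}^*)$ and the display of their trivializations in terms of $\zeta_\Omega$ and $\sigma_\Omega$, which is the computational core of Painlev\'e's original argument and the step I expect to be the hardest to reproduce without essentially reconstructing his proof.
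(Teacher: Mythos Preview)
The paper does not prove this statement: it is stated as a \emph{Fact} with a bare citation to Painlev\'e's 1902 paper and then used as a black box in the proof of Theorem \ref{T3}. There is therefore no proof in the paper to compare your proposal against.

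That said, your outline is the standard modern reinterpretation of Painlev\'e's result via commutative algebraic groups, and it is essentially correct. Passing through Theorem \ref{T2} to obtain a genuine rational addition law, invoking a Weil group-chunk theorem to produce a two-dimensional connected commutative algebraic group $G$, and then applying Chevalley's structure theorem is exactly how one would do this today; the six structural types $(\delta,a,b)$ you list match the six families $\mathcal{P}_1,\dots,\mathcal{P}_6$, and your identification $r=2\delta+a$ agrees with Proposition \ref{rank}. Painlev\'e's original argument predates algebraic groups entirely and proceeds by direct function-theoretic manipulations of the addition formulas; your route is conceptually cleaner but trades his explicit computations for the (nontrivial) machinery of Weil and Chevalley.

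Two technical points to tighten. First, Lemma \ref{discrete groups}(1) requires a local diffeomorphism at $0$, not just functional independence; you should translate to a point where the Jacobian of $(\psi_1,\psi_2)$ is nonsingular before invoking it. Second, the step you correctly flag as the main obstacle --- producing $G$ from the generically defined rational law and identifying $\mathbb{C}(\psi_0,\psi_1,\psi_2)$ with a subfield of the function field of $G$ pulled back along the exponential --- is real content: beyond Weil's theorem you need to check that the analytic map $\mathbb{C}^2\to G$ induced by $\Psi$ is a surjective homomorphism with kernel exactly $\Lambda$, which follows from connectedness of $G$ and the fact that the exponential of a commutative complex Lie group is a covering, but should be said explicitly.
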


In the next lemma we analyze the group of periods of the  families of Painlev\'e's theorem.
Firstly, we will list the properties of the Weierstrass $\sigma$ and $\zeta$ functions that will be needed.

\begin{fact}\emph{(\cite[Ch.IV]{Chandrasekharan})}\label{properties}
Let $\Omega :=<\omega _1,\omega _2>_{\mathbb{Z}}$ be a lattice of $\mathbb{C}$.
Then,
\begin{enumerate}
\item[$(1)$] \quad $\displaystyle \zeta _\Omega (z+\omega _i)=\zeta _\Omega (z)+2\zeta _\Omega (\omega _i/2)$ 
\quad for each $i\in \{1,2\}$,
\item[$(2)$] \quad $\displaystyle 
\sigma _\Omega (z+\omega _i)=-\sigma _\Omega(z)e^{2\zeta _\Omega (\frac{\omega _i}{2})(z+\frac{\omega _i}{2})}$ 
\quad for each $i\in \{1,2\}$,
\end{enumerate}
\end{fact}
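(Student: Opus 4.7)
The plan is to derive both identities from the defining formulas $\zeta_\Omega = \sigma'_\Omega/\sigma_\Omega$ and $\wp_\Omega = -\zeta'_\Omega$, together with two classical facts immediate from the product expansion of $\sigma_\Omega$: that $\sigma_\Omega$ is an odd entire function whose zero set is exactly $\Omega$ (each zero being simple), and that $\zeta_\Omega$ is odd (since the derivative of an odd function is even, and the ratio of an even and an odd function is odd).

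For property $(1)$, I would start by invoking the $\Omega$-periodicity of $\wp_\Omega$, a standard fact that can be read off its Eisenstein-type series. Since $\zeta'_\Omega = -\wp_\Omega$, the meromorphic function $z \mapsto \zeta_\Omega(z+\omega_i) - \zeta_\Omega(z)$ has identically vanishing derivative and hence is a constant $c_i \in \mathbb{C}$. Evaluating at $z = -\omega_i/2$ and using the oddness of $\zeta_\Omega$ gives $c_i = \zeta_\Omega(\omega_i/2) - \zeta_\Omega(-\omega_i/2) = 2\zeta_\Omega(\omega_i/2)$, which is exactly the claimed identity.

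For property $(2)$, I would integrate the identity in $(1)$ via $\zeta_\Omega = (\log \sigma_\Omega)'$. On any simply connected open set $U \subset \mathbb{C}$ on which $\sigma_\Omega$ does not vanish, a holomorphic branch of $\log \sigma_\Omega$ satisfies
\[
\log \sigma_\Omega(z+\omega_i) - \log \sigma_\Omega(z) = 2\zeta_\Omega(\omega_i/2)\, z + K'_i
\]
for some constant $K'_i$. Exponentiating yields
\[
\sigma_\Omega(z+\omega_i) = K_i\, \sigma_\Omega(z)\, e^{2\zeta_\Omega(\omega_i/2)\, z}
\]
on $U$, and by the identity principle this identity extends globally to $\mathbb{C}$ with a single constant $K_i$. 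To pin $K_i$ down, I would set $z = -\omega_i/2$ and use that $\sigma_\Omega(\omega_i/2) \neq 0$ (because $\omega_i/2 \notin \Omega$) together with the oddness $\sigma_\Omega(-\omega_i/2) = -\sigma_\Omega(\omega_i/2)$; this forces $K_i = -e^{\zeta_\Omega(\omega_i/2)\, \omega_i}$, and substituting back and absorbing the constant into the exponential gives the stated $-\sigma_\Omega(z)\, e^{2\zeta_\Omega(\omega_i/2)(z+\omega_i/2)}$.

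The only delicate point is the non-vanishing $\sigma_\Omega(\omega_i/2) \neq 0$, which is a quick consequence of the fact that the zeros of $\sigma_\Omega$ are exactly the lattice points. Apart from this, the argument is a routine exercise in integration and bookkeeping, and no deeper machinery is required.
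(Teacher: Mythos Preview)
Your argument is correct and is precisely the standard derivation of the quasi-periodicity relations for $\zeta_\Omega$ and $\sigma_\Omega$. The paper itself does not prove this Fact at all: it is stated with a bare citation to Chandrasekharan's book, so there is no ``paper's own proof'' to compare against. What you have written is essentially the textbook argument one finds in that reference, so your proposal is entirely appropriate here.
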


\begin{lemma}\label{periods} Let $a \in \mathbb{C}$ and $\Omega :=<\omega _1,\omega _2>_{\mathbb{Z}}$ 
be a lattice of $(\mathbb{C},+)$.
Then, 
\begin{enumerate}
\item[$(1)$] \quad $\displaystyle \Lambda _{g_1}=\{(0,0)\}$,
\item[$(2)$] \quad $\displaystyle \Lambda _{g_2}=<(2\pi i,0)>_{\mathbb{Z}}$,
\item[$(3)$] \quad $\displaystyle \Lambda _{g_3}=< (2\pi i,0), (0,2\pi i)>_{\mathbb{Z}}$,
\item[$(4)$] \quad $\displaystyle \Lambda _{g_{4,a, \Omega}}=<(\omega _1 ,2a \zeta _\Omega (\omega _1/2)),
(\omega _2 ,2a \zeta _\Omega (\omega _2/2)) >_{\mathbb{Z}}$,
\item[$(5)$] \quad $\displaystyle \Lambda _{g_{5,a ,\Omega}}=<(\omega _1 ,2a \zeta _\Omega (\omega _1/2)),
(\omega _2 ,2a \zeta _\Omega (\omega _2/2),(0,2\pi i)) >_{\mathbb{Z}}$.
\end{enumerate}
\end{lemma}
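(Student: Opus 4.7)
The strategy is to treat each of the five cases by directly computing the condition $g_i((u,v)+(\alpha,\beta)) = g_i(u,v)$ componentwise, using only the classical periodicity/quasi-periodicity properties of $\exp$, $\wp_\Omega$, $\zeta_\Omega$ and $\sigma_\Omega$ recalled in Fact \ref{properties}. The basic point is that each $g_i$ splits as a pair $(F_i(u),H_i(u,v))$ where $F_i$ depends only on $u$ and $H_i$ depends on $v$ in a controlled way, so one analyses the first coordinate to determine the admissible shifts $\alpha$ of $u$, and then solves for $\beta$.

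For (1) the statement is immediate. For (2) and (3) I would simply use the well-known fact $\Lambda_{e^z}=2\pi i\mathbb{Z}$: writing the defining equations $e^{u+\alpha}=e^u$ (and $e^{v+\beta}=e^v$ in case (3)) this forces $\alpha\in 2\pi i\mathbb{Z}$ and $\beta=0$ (resp.\ $\beta\in 2\pi i\mathbb{Z}$).

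For (4), the equality of first coordinates $\wp_\Omega(u+\alpha)=\wp_\Omega(u)$ forces $\alpha\in\Omega$, since the period group of the Weierstrass $\wp$-function is exactly $\Omega$. Writing $\alpha=n_1\omega_1+n_2\omega_2$ and iterating Fact \ref{properties}(1), I obtain
\[
\zeta_\Omega(u+\alpha)-\zeta_\Omega(u)=2n_1\zeta_\Omega(\omega_1/2)+2n_2\zeta_\Omega(\omega_2/2),
\]
so the second-coordinate condition $v+\beta-a\zeta_\Omega(u+\alpha)=v-a\zeta_\Omega(u)$ gives
\[
\beta=n_1\bigl(2a\zeta_\Omega(\omega_1/2)\bigr)+n_2\bigl(2a\zeta_\Omega(\omega_2/2)\bigr),
\]
which yields the claimed basis.

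For (5), the first coordinate again forces $\alpha\in\Omega$. Writing $\alpha=n_1\omega_1+n_2\omega_2$ and iterating Fact \ref{properties}(2) gives a quasi-periodicity formula of the form $\sigma_\Omega(z+\alpha)=\varepsilon(\alpha)\,\sigma_\Omega(z)\,\exp\bigl(\eta(\alpha)(z+\alpha/2)\bigr)$ with $\varepsilon(\alpha)\in\{\pm 1\}$ and $\eta(\alpha)=2n_1\zeta_\Omega(\omega_1/2)+2n_2\zeta_\Omega(\omega_2/2)$. Applying this both at $z=u$ and at $z=u-a$ the prefactor $\varepsilon(\alpha)$ cancels in the quotient and a direct computation yields
\[
\frac{\sigma_\Omega(u+\alpha-a)}{\sigma_\Omega(u+\alpha)}=\frac{\sigma_\Omega(u-a)}{\sigma_\Omega(u)}\,e^{-a\,\eta(\alpha)}.
\]
Hence the second-coordinate equation reduces to $e^\beta=e^{a\,\eta(\alpha)}$, i.e.\ $\beta\in a\,\eta(\alpha)+2\pi i\mathbb{Z}$. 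This gives exactly the three generators listed, the extra one coming from the freedom $\beta\in 2\pi i\mathbb{Z}$ when $\alpha=0$. The main (mild) obstacle is the careful bookkeeping of the quasi-periodicity constants in (5); everything else is a straightforward check.
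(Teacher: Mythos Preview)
Your proposal is correct and follows essentially the same approach as the paper: both proofs dismiss (1)--(3) as immediate, then for (4) and (5) use the first coordinate to force $\alpha\in\Omega$, iterate Fact~\ref{properties} to obtain the quasi-periodicity increments of $\zeta_\Omega$ and $\sigma_\Omega$, and solve the second-coordinate equation for $\beta$. Your treatment of (5) is in fact slightly more explicit than the paper's, tracking the sign $\varepsilon(\alpha)$ and noting that it cancels in the quotient, whereas the paper absorbs this into an unspecified constant $C$.
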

\begin{proof}The only non trivial cases are the last two ones when $a\neq 0$. On the other hand, it is easy to check using Fact \ref{properties} that the above tuples are periods of the corresponding map.

We begin with the case $g_{4,a,\Omega}$.
Let $g$ denote $g_{4,a,\Omega}$ and $g_1$ and $g_2$ denote the coordinate functions of $g$.
Fix $\lambda :=(\lambda _1,\lambda _2)\in \Lambda _g$.
Clearly $\Lambda _g\subset \Lambda _{g_1}\cap \Lambda _{g_2}$.
Since $\lambda \in \Lambda _{g_1}$, we have that $\lambda _1\in \Omega$.
Fix $m,n\in \mathbb{Z}$ such that $\lambda _1 =m\omega _1+n\omega _2$.
It follows from Fact \ref{properties}.(1) that
\[
\zeta _\Omega (u+m\omega _1+n\omega _2) - \zeta _\Omega (u)=2m\zeta _\Omega (\omega _1/2)+2n\zeta _\Omega (\omega _2/2).
\tag{$\ast$}\label{Chfact}
\]
Since $\lambda \in \Lambda _{g_2}$, we have that 
\[
v+\lambda _2-a\zeta _\Omega (u+m\omega _1+n\omega _2) = v-a\zeta _\Omega (u)
\]
and hence by equation (\ref{Chfact}) 
\[
\lambda _2=2am\zeta _\Omega (\omega _1/2)+2an\zeta _\Omega (\omega _2/2).
\]
This means that the elements of $\Lambda _g$ are of the form 
\[
\big(m\omega _1+n\omega _2, 2am\zeta _\Omega (\omega _1/2)+2an\zeta _\Omega (\omega _2/2)\big)
\]
with $m,n\in \mathbb{Z}$, so we are done with this case.

Now we show the case $g_{5,a,\Omega}$.
Let $g$ denote $g_{5,a,\Omega}$ and $g_1$ and $g_2$ denote the coordinate functions of $g$.
Fix $\lambda :=(\lambda _1,\lambda _2)\in \Lambda _g$.
Reasoning as before we get that there exists $m,n\in \mathbb{Z}$ such that $\lambda _1 =m\omega _1+n\omega _2$.
Moreover, again by Fact \ref{properties}.(2) and from equation (\ref{Chfact}) we get that
\[
\frac{\sigma _\Omega (u+m\omega _1+n\omega _2)}{\sigma _\Omega (u)}=
Ce^{u(2m\zeta _\Omega (\omega _1/2)+2n\zeta _\Omega (\omega _2/2))}\tag{$\dagger$}\label{Chfact2}
\]
for some constant $C\in \mathbb{C}$.
Since $\lambda \in \Lambda _{g_2}$, we have that
\[
\frac{\sigma _\Omega (u+\lambda _1-a)}{\sigma _\Omega (u+\lambda _1)}e^{v+\lambda _2} 
=
\frac{\sigma _\Omega (u-a)}{\sigma _\Omega (u)}e^v
\]
and hence 
\[
e^{\lambda _2}=\frac{\sigma _\Omega (u-a)}{\sigma _\Omega (u)}\frac{\sigma _\Omega (u+\lambda _1)}{\sigma _\Omega (u+\lambda _1-a)}.
\]
So by equation (\ref{Chfact2}) we get
\[
e^{\lambda _2}=e^{2a(m\zeta _\Omega (\omega _1/2)+n\zeta _\Omega (\omega _2/2))}
\]
and hence
\[
\lambda _2=2am\zeta _\Omega (\omega _1/2)+2an\zeta _\Omega (\omega _2/2)+2\ell \pi i
\]
for some $\ell \in \mathbb{Z}$.
This means that the elements of $\Lambda _g$ are of the form 
\[
\big(m\omega _1+n\omega _2, 2am\zeta _\Omega (\omega _1/2)+2an\zeta _\Omega (\omega _2/2)+2\ell \pi i\big)
\]
with $\ell,m,n\in \mathbb{Z}$, which concludes the proof.	
\end{proof}

Now we study the $\mathbb{Z}$--$rank$ of the Painlev\'e's families of fields (the $\mathbb{Z}$--$rank$ was 
defined in Definition \ref{Z-rank}).

\begin{proposition}\label{rank} The $\mathbb{Z}$--$rank$s of the Painlev\'e's families are
$\mathbb{Z}$--$rank\, \mathcal{P}_1=0$, 
$\mathbb{Z}$--$rank\, \mathcal{P}_2=1$, 
$\mathbb{Z}$--$rank\, \mathcal{P}_3=2$,
$\mathbb{Z}$--$rank\, \mathcal{P}_4=2$, 
$\mathbb{Z}$--$rank\, \mathcal{P}_5=3$ and 
$\mathbb{Z}$--$rank\, \mathcal{P}_6=4$.
\end{proposition}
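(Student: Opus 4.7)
The plan is to reduce each case of the proposition to an explicit rank computation in the lattice-type group exhibited by Lemma \ref{periods}. The key preliminary observation is that every field in a family $\mathcal{P}_i$ has a natural distinguished transcendence basis: namely, the coordinate functions of $g_1$, $g_2$, $g_3$, $g_{4,a,\Omega}$, $g_{5,a,\Omega}$ (composed with some $\alpha \in GL_2(\mathbb{C})$), and, for $\mathcal{P}_6$, any $f=(f_1,f_2)\in \mathbb{C}(\Lambda)^2$ chosen to be a transcendence basis of $\mathbb{C}(\Lambda)$ over $\mathbb{C}$. Since the $\mathbb{Z}$--rank is well defined as soon as \emph{some} transcendence basis $f$ has $\Lambda_f$ discrete (by Lemma \ref{algebraicity of periods}), it suffices to compute $\mathrm{rank}\,\Lambda_f$ for these distinguished bases.

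For families $\mathcal{P}_1$--$\mathcal{P}_5$, I would first invoke Lemma \ref{discrete groups}.$(4)$ to observe that $\mathrm{rank}\,\Lambda_{g_i\circ \alpha}=\mathrm{rank}\,\Lambda_{g_i}$ for every $\alpha\in GL_2(\mathbb{C})$, which reduces each case to the computation of $\mathrm{rank}\,\Lambda_{g_i}$. These ranks are then read off from Lemma \ref{periods}: cases $(1)$, $(2)$, $(3)$ are immediate, giving ranks $0$, $1$, $2$. In case $(4)$, the generators $(\omega_1,2a\zeta_\Omega(\omega_1/2))$ and $(\omega_2,2a\zeta_\Omega(\omega_2/2))$ are $\mathbb{R}$-linearly independent because their first coordinates $\omega_1,\omega_2$ already are (by definition of the lattice $\Omega$), giving rank $2$ regardless of $a\in\{0,1\}$. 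In case $(5)$, the same argument handles the first two generators, and the third generator $(0,2\pi i)$ has vanishing first coordinate, so a nontrivial $\mathbb{R}$-linear relation among the three would force the coefficients of the first two to vanish (by looking at the first coordinates) and then the third coefficient to vanish as well, yielding rank $3$.

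For family $\mathcal{P}_6$, let $\Lambda$ be a lattice of $(\mathbb{C}^2,+)$ with $tr.deg._{\mathbb{C}}\mathbb{C}(\Lambda)=2$ and pick a transcendence basis $f=(f_1,f_2)$ of $\mathbb{C}(\Lambda)$. By definition of $\mathbb{C}(\Lambda)$ every $f_i$ has $\Lambda\subset \Lambda_{f_i}$, and so $\Lambda\subset \Lambda_f=\Lambda_{f_1}\cap \Lambda_{f_2}$. Since $\Lambda$ already has $4$ $\mathbb{R}$-linearly independent elements in $\mathbb{C}^2\cong\mathbb{R}^4$, the group $\Lambda_f$ contains these too, so it is automatically discrete and of rank $4$. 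Thus $\mathbb{Z}$--$\mathrm{rank}\,\mathbb{C}(\Lambda)=4$.

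I do not expect any serious obstacle here: all the arithmetic is linear-algebraic over $\mathbb{R}$ inside the period groups listed in Lemma \ref{periods}. The only mild subtlety is to make sure the $\mathbb{Z}$--rank is well defined for each family, i.e.\ that the relevant period groups are actually discrete; this is handled by the $GL_2(\mathbb{C})$-invariance of Lemma \ref{discrete groups}.$(4)$ for $\mathcal{P}_1$--$\mathcal{P}_5$, and by the inclusion $\Lambda\subset \Lambda_f$ for $\mathcal{P}_6$.
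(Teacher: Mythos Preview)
Your treatment of $\mathcal{P}_1$--$\mathcal{P}_5$ is correct and essentially identical to the paper's proof: reduce via Lemma~\ref{discrete groups}.(4) to $\mathrm{rank}\,\Lambda_{g_i}$ and read this off from Lemma~\ref{periods}. Your explicit verification of $\mathbb{R}$-linear independence of the generators in cases (4) and (5) is exactly what is needed (and implicitly used in the paper) to conclude both discreteness and the stated rank.

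There is, however, a genuine gap in your argument for $\mathcal{P}_6$. From $\Lambda\subset\Lambda_f$ you correctly get that $\Lambda_f$ contains four $\mathbb{R}$-linearly independent vectors, but this does \emph{not} imply that $\Lambda_f$ is discrete: a subgroup of $\mathbb{R}^4$ containing a lattice can be much larger (e.g.\ all of $\mathbb{R}^4$). Until you know $\Lambda_f$ is discrete, the $\mathbb{Z}$--rank of $\mathbb{C}(\Lambda)$ is not even defined. The paper closes this gap using the hypothesis $\mathrm{tr.deg.}_{\mathbb{C}}\mathbb{C}(\Lambda)=2$: this is equivalent to $\mathbb{C}(\Lambda)$ being nondegenerate, so there exists $g\in\mathbb{C}(\Lambda)$ with $\Lambda_g$ discrete. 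Since $g$ is algebraic over $\mathbb{C}(f)$, the argument of Lemma~\ref{algebraicity of periods} (which only uses that each root of a polynomial over $\mathbb{C}(f)$ is permuted by translations in $\Lambda_f$) shows that non-discreteness of $\Lambda_f$ would force non-discreteness of $\Lambda_g$, a contradiction. You need this (or an equivalent) step; the inclusion $\Lambda\subset\Lambda_f$ alone is not enough.
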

\begin{proof}
Let $i\in \{1,2,3\}$.
Firstly, note that since $\alpha\in GL_2(\mathbb{C})$ by Lemma \ref{discrete groups}.$(4)$ the fields belonging to the same 
family $\mathcal{P}_i$ have the same $\mathbb{Z}$--$rank$, which is $rank\, \Lambda _{g_i}$.
Then apply Lemma \ref{periods} to deduce that $rank\, \Lambda _{g_i}=i-1$.

Let $i\in \{4,5\}$.
As above, it suffices to consider $rank\, \Lambda _{g_i,a,\Omega}$.
By Lemma \ref{periods} these ranks are independent of $a$ and $\Omega$ and hence $rank\, \Lambda _{g_i,a,\Omega}=i-2$.

Finally, we consider the case of the abelian functions. 
Let $\Lambda$ be a lattice of $(\mathbb{C}^2,+)$ such that $\mathbb{C}(\Lambda )$ has transcendence degree $2$ over $\mathbb{C}$. Fix a transcendence basis $\{f_1,f_2\}$ of $\mathbb{C}(\Lambda)$ and let us see that $rank \Lambda_f=4$. By definition $\Lambda < \Lambda_f$ and therefore it is enough to check that $\Lambda_f$ is discrete. Since $\text{tr.deg.}_\mathbb{C}\mathbb{C}(\Lambda)=2$, there exist a nondegenerate meromorphic function $g\in \mathbb{C}(\Lambda)$. In particular, $g$ is algebraic over $\mathbb{C}(f)$. Arguing as in the proof of Lemma \ref{algebraicity of periods}, if $\Lambda_f$ is not discrete then $\Lambda_g$ is not discrete, a contradiction.\end{proof}

Finally, we consider the possible locally Nash group structures over $(\mathbb{R}^2,+)$ induced by Painlev\'e's description
(Fact \ref{TP}).

\begin{theorem}\label{T3}
Every simply connected $n$-dimensional abelian locally Nash group is locally Nash isomorphic to one of the form 
$(\mathbb{R}^2,+,f)$ where $f:\mathbb{C}^2\rightarrow \mathbb{C}^2$ is a real meromorphic map admitting an AAT and such 
that its coordinate functions are algebraic over one of the fields of the following families:
\begin{enumerate}
\item[$(1)$] $\mathcal{P}_1 :=\{ \, \mathbb{C}(g_1\circ \alpha ) : \alpha \in GL_2(\mathbb{C})\, \}$, where $g_1(u,v)=(u,v)$; \vspace{1mm}
\item[$(2)$] $\mathcal{P}_2 :=\{ \, \mathbb{C}(g_2\circ \alpha ) : \alpha \in GL_2(\mathbb{C})\, \}$, where $g_2(u,v)=(u,e^v)$; \vspace{1mm}
\item[$(3)$] $\mathcal{P}_3 :=\{ \, \mathbb{C}(g_3\circ \alpha ) : \alpha \in GL_2(\mathbb{C})\, \}$, where $g_3(u,v)=(e^u,e^v)$; \vspace{1mm}
\item[$(4)$] $\mathcal{P}_4 :=\{ \, \mathbb{C}(g_{4 ,a, \Omega}\circ \alpha ) : \alpha \in GL_2(\mathbb{C}),\, a \in \{0,1\}, 
\,\Omega \text{ is a lattice of } (\mathbb{C},+)\, \}$, where $g_{4,a ,\Omega}(u,v)=(\wp _{\Omega}(u),v-a\zeta _{\Omega} (u))$; \vspace{1mm}
\item[$(5)$] $\mathcal{P}_5 :=\{ \, \mathbb{C}(g_{5,a ,\Omega}\circ \alpha ) : \alpha \in GL_2(\mathbb{C}),\, a \in \mathbb{C}, 
\,\Omega \text{ is a lattice of } (\mathbb{C},+)\, \}$, where $g_{5,a ,\Omega}(u,v)=
\left(\wp _{\Omega}(u),\frac{\sigma _{\Omega} (u-a)}{\sigma _{\Omega} (u)}e^v\right)$; and \vspace{1mm}
\item[$(6)$] $\mathcal{P}_6:=\{  \, \mathbb{C}(\Lambda ) : \Lambda \text{ is a lattice of } (\mathbb{C}^2,+),\,
tr.deg.\, _{\mathbb{C}}\mathbb{C}(\Lambda )=2 \}$.
\end{enumerate}
Furthermore, if $(\mathbb{R}^2,+,g)$ is a locally Nash group, where $g:\mathbb{C}^2\rightarrow \mathbb{C}^2$ is a real meromorphic 
map admitting an AAT, and the coordinate functions of $f$ and $g$ are algebraic over fields of different families, then $(\mathbb{R}^2,+,f)$ and 
$(\mathbb{R}^2,+,g)$ are not locally Nash isomorphic.

Even further, each of the families induce at least one locally Nash group structure on $(\mathbb{R}^2,+)$.
\end{theorem}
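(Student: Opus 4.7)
The plan is three-fold, matching the three assertions of the theorem.

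For the classification part, I would combine Theorem \ref{T1} with Painlev\'e's Fact \ref{TP}. By Theorem \ref{T1}, every simply connected $2$-dimensional abelian locally Nash group is locally Nash isomorphic to some $(\mathbb{R}^2,+,f)$ with $f:\mathbb{C}^2\rightarrow\mathbb{C}^2$ real meromorphic and admitting an AAT. By definition of $(\mathbb{R}^2,+,f)$, some translate of $f$ is an analytic diffeomorphism on a neighborhood of $0\in\mathbb{R}^2$, which is stronger than the functional independence of its coordinate functions needed to invoke Fact \ref{TP}. Painlev\'e's theorem then places the coordinate functions of $f$ as algebraic over a field of one of the six families $\mathcal{P}_1,\ldots,\mathcal{P}_6$.

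For the non-isomorphism assertion the main tool is the $\mathbb{Z}$-$rank$. By Proposition \ref{different ranks}, $rank\, \Lambda_f$ is a locally Nash isomorphism invariant, and by Proposition \ref{rank} the families $\mathcal{P}_1,\mathcal{P}_2,\mathcal{P}_3,\mathcal{P}_4,\mathcal{P}_5,\mathcal{P}_6$ have $\mathbb{Z}$-$rank$s $0,1,2,2,3,4$ respectively. This separates every pair except $(\mathcal{P}_3,\mathcal{P}_4)$, which is the principal obstacle. To distinguish these two I would use an algebro-geometric invariant of the field generated by the coordinates of $f$. Any field in $\mathcal{P}_4$ contains $\wp_\Omega\circ\ell$ for some nonzero linear form $\ell:\mathbb{C}^2\rightarrow\mathbb{C}$, and adjoining $\wp'_\Omega\circ\ell$ yields the function field of an elliptic curve, a transcendence-$1$ subfield of genus $1$. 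In contrast, any field in $\mathcal{P}_3$ is a finite algebraic extension of $\mathbb{C}(g_3\circ\alpha)\cong\mathbb{C}(x,y)$, inside which no nonrational transcendence-$1$ subfield exists (by the classical L\"uroth--Castelnuovo result). Since by Corollary \ref{compatibilityAut.1} a locally Nash isomorphism between $(\mathbb{R}^2,+,f)$ and $(\mathbb{R}^2,+,g)$ yields $g\circ\alpha$ algebraic over $\mathbb{R}(f)$ for some real $\alpha$, the $\mathbb{C}$-algebraic closures (inside $\mathcal{M}_{\mathbb{C},2}(\mathbb{C}^2)$) of the two coordinate fields coincide, and the existence of a positive-genus transcendence-$1$ subfield is therefore an invariant of the isomorphism class, separating $\mathcal{P}_4$ from $\mathcal{P}_3$.

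For the existence statement, I would exhibit an explicit real meromorphic representative in each family. Take $f=g_1$, $f=g_2$ and $f=g_3$ for $\mathcal{P}_1$, $\mathcal{P}_2$ and $\mathcal{P}_3$ respectively; fix a real lattice $\Omega\subset\mathbb{C}$ (for instance $\Omega=\mathbb{Z}+i\mathbb{Z}$) and take $f=g_{4,0,\Omega}$ for $\mathcal{P}_4$; take the same $\Omega$ together with a real $a\in\mathbb{R}$ and set $f=g_{5,a,\Omega}$ for $\mathcal{P}_5$; and for $\mathcal{P}_6$ choose a real lattice $\Lambda\subset\mathbb{C}^2$ with $tr.deg._{\mathbb{C}}\mathbb{C}(\Lambda)=2$---for instance the period lattice of a real abelian surface---and let $f$ be a pair of algebraically independent real abelian functions in $\mathbb{C}(\Lambda)$. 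In each case reality of the ingredients makes $f$ real meromorphic; the classical addition formulas for polynomials, exponentials and the Weierstrass $\wp$, $\zeta$, $\sigma$ functions, together with Lemma \ref{elimination} (in the abelian case), supply the AAT; and since the Jacobian of $f$ is a nonzero meromorphic function, a point $k\in\mathbb{R}^2$ can be chosen at which $f(u+k)$ is an analytic diffeomorphism on a neighborhood of $0$. Lemma \ref{AAT-star} then yields the locally Nash group structure $(\mathbb{R}^2,+,f)$ in each family, completing the proof.
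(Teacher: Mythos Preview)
Your classification and existence parts follow the paper exactly (Theorem~\ref{T1} plus Fact~\ref{TP}; explicit real representatives---the paper takes $(\wp_\Omega(u),\wp_\Omega(v))$ for a real lattice $\Omega$ as its witness for $\mathcal{P}_6$, which is more concrete than your abelian-surface route but equivalent), and so does the rank separation via Propositions~\ref{different ranks} and~\ref{rank}.

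The real difference is how you separate $\mathcal{P}_3$ from $\mathcal{P}_4$. The paper does this by an elementary transcendence argument: an isomorphism would, via Corollary~\ref{compatibilityAut.1}, force $(e^{au+bv},e^{cu+dv})$ to be algebraic over $\mathbb{C}(\wp_\Omega(u),\zeta_\Omega(u),v)$ for some invertible matrix $\left(\begin{smallmatrix}a&b\\c&d\end{smallmatrix}\right)$, and isolating the $v$-dependence (say $b\neq 0$) makes $e^{bv}$ algebraic over $\mathbb{C}(v)$, a contradiction in two lines. Your genus invariant is a more structural idea and can be made to work, but as written it has gaps. First, the fields in $\mathcal{P}_3$ \emph{are} $\mathbb{C}(g_3\circ\alpha)$, not finite extensions of them; what you actually need is that the algebraic closure of $\mathbb{C}(e^u,e^v)$ inside $\mathcal{M}_{\mathbb{C},2}(\mathbb{C}^2)$ has no positive-genus transcendence-one subfield. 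Establishing that requires (a) identifying this closure as $\bigcup_N\mathbb{C}(e^{u/N},e^{v/N})$, which uses the period argument of Lemma~\ref{algebraicity of periods} together with the nontrivial fact that a global meromorphic function on $(\mathbb{C}^*)^2$ algebraic over the coordinate functions is rational, and (b) showing that $\mathbb{C}(x,y)$ contains no positive-genus transcendence-one subfield---true by restricting a dominant rational map $\mathbb{A}^2\dashrightarrow C$ to a generic line and invoking one-variable L\"uroth, but this is not ``L\"uroth--Castelnuovo'' in any standard usage (Castelnuovo's theorem concerns rationality of unirational surfaces, which is a different statement). Your route buys a conceptual invariant at the cost of substantially more work; the paper's variable-separation trick is self-contained and short.
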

\begin{proof}
Firstly, we point out that $g_1,g_2,g_3$ clearly admit an AAT,  $g_{4,a,\Omega}$ and $g_{5,a ,\Omega}$ admit it by
\cite[Art. 16 and 19]{Painleve} and the fact that $g_{4,a,\Omega}$ is algebraic over $g_{4,1,\Omega}$ for all $a\neq 0$. On the other hand, any transcendence basis of a field in  $\mathcal{P}_6$ satisfies an AAT by \cite[Chap 5. \textsection 13]{Siegel}.
 
We show that each family induce at least one locally Nash group structure on $(\mathbb{R}^2,+$). Fix $a\in \mathbb{R}$ and a real lattice $\Omega$ of $(\mathbb{C},+)$. We recall that then $g_1$, \ldots , $g_{5,\alpha ,\Omega}$ are real meromorphic maps, each one admitting an AAT.
Also, since $\wp_\Omega$ admits an AAT, $g_6(u,v):=(\wp _\Omega (u),\wp _\Omega (v))$ is a real meromorphic map admitting an AAT.
Clearly $g_6$ is a transcendence basis of $\mathbb{C}(\Omega \times \Omega )$, so it is algebraic over a field of $\mathcal{P}_6$.
Since the restriction of a translation of each of the latter maps to a sufficiently small neighborhood of $\mathbb{R}^2$ is a diffeomorphism, 
each $(\mathbb{R}^2,+,g_1)$,\ldots , $(\mathbb{R}^2,+,g_6)$ is a locally Nash group.

By Theorem \ref{T1} every simply connected $n$-dimensional abelian locally Nash group is locally Nash isomorphic to one of the form $(\mathbb{R}^2,+,f)$ where $f:\mathbb{C}^2\rightarrow \mathbb{C}^2$ is a real meromorphic map admitting an AAT. Furthermore, by Fact \ref{TP} there exists $i\in \{1,\ldots ,6\}$ and $\mathbb{L}\in \mathcal{P}_i$ such that $f$ is 
algebraic over $\mathbb{L}$. Let $(\mathbb{R}^2,+,g)$ be another locally Nash group and fix $j\in \{1,\ldots ,6\}$ such that there exists $\mathbb{L}'\in \mathcal{P}_j$ 
such that $g$ is algebraic over $\mathbb{L}'$.
It is enough to show if $(\mathbb{R}^2,+,f)$ and $(\mathbb{R}^2,+,g)$ are isomorphic as locally Nash groups then $i=j$.
We recall that $f,g:\mathbb{C}^2\rightarrow \mathbb{C}^2$ are real meromorphic maps and hence, 
by Proposition \ref{different ranks}, $rank\, \Lambda _g =rank\, \Lambda _f$.
Let $r:=rank\, \Lambda _f$.
By Proposition \ref{rank} some of the cases are already solved, namely:
if $r=0$ then $i=j=1$; if $r=1$ then $i=j=2$; if $r=3$ then $i=j=5$; and if $r=4$ then $i=j=6$.
For the case $r=2$, suppose for a contradiction that $i=4$ and $j=3$.
By definition there exist $\alpha_1,\alpha_2\in GL_2(\mathbb{C})$, $a \in \mathbb{C}$ and a lattice $\Omega$ of $(\mathbb{C},+)$ 
such that $f$ is algebraic over $\mathbb{L}_1=\mathbb{C}(g_{4,a ,\Omega}\circ \alpha _1)$ and $g$ is algebraic over 
$\mathbb{L}_2=\mathbb{C}(g_3\circ \alpha _2)$.
By Corollary \ref{compatibilityAut.1} there exists $\alpha \in GL_2(\mathbb{R})$ such that 
$g\circ \alpha$ is algebraic over $\mathbb{R}(f)$.  
Since the coordinate functions of $g \circ \alpha$ are algebraically independent over $\mathbb{C}$, we get that
$g_3\circ \alpha _3$ is algebraic over $\mathbb{C}(g_{4,a,\Omega})$ for some 
$\alpha _3:=\begin{pmatrix} a & b\\ c & d\end{pmatrix}\in GL_2(\mathbb{C})$.
Since $g_{4,a,\Omega}$ is algebraic over $\mathbb{C}(\wp _{\Lambda}(u),\zeta _{\Lambda} (u), v)$, we have that
\[
\left( e^{au+bv}, e^{cu+dv} \right) \text{ is algebraic over } \mathbb{C}(\wp _{\Lambda}(u),\zeta _{\Lambda} (u), v).
\]
Since $\alpha _3\in GL_2(\mathbb{C})$, either $b\neq 0$ or $d\neq 0$.
Without loss of generality we may assume $b\neq 0$.
We also note that $e^{bv}$ is algebraic over $\mathbb{C}(e^{au}, e^{au+bv})$, so
\[
e^{bv} \text{ is algebraic over } \mathbb{C}(e^{au}, \wp _{\Lambda}(u),\zeta _{\Lambda} (u), v),
\]
which means that $e^{bv}$ is algebraic over $\mathbb{C}(v)$, a contradiction.
So either $i=j=3$ or either $i=j=4$.
\end{proof}
 
\section*{Appendix: One-dimensional simply connected locally Nash groups.}\label{One-dimensional}
\setcounter{section}{6}
\setcounter{lemma}{0}

A classification of the one-dimensional simply connected locally Nash groups was given by Madden and Stanton in \cite{Madden_Stanton} 
(see also \cite{Madden_Stanton_Errata}).
In this appendix we provide a detailed proof of such classification using the techniques we have developed for dimension $2$.

Meromorphic functions from $\mathbb{C}$ to $\mathbb{C}$ that admit an algebraic addition theorem were classified by
Weierstrass, see for example \cite[Ch.VII]{Hancock}.

\begin{fact}[Weierstrass]\label{TW}
If $f:\mathbb{C}\rightarrow \mathbb{C}$ is a meromorphic function that admits an AAT then there exists 
$\alpha \in GL_1(\mathbb{C})$ such that $f$ is algebraic over $\mathbb{C}(g\circ \alpha )$, where $g$ is either
$$\hspace{-1cm}\begin{array}{lcll}
\emph{(I)} & \hspace{2cm}& g(u)  = & u, \vspace{1mm}\\
 \emph{(II)} & \hspace{2cm}& g(u) = &e^u, \vspace{1mm} \\
 \emph{(III)} & \hspace{2cm}& g(u)= &\wp _{\Lambda}(u), \text{ for some lattice } \Lambda < (\mathbb{C},+).
\end{array}$$\end{fact}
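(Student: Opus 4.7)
The plan is to apply Theorem \ref{T2} to pass to a two-generator field of meromorphic functions on $\mathbb{C}$ closed under a rational addition formula, and then to identify this field with the function field of one of the three connected one-dimensional commutative complex algebraic groups ($\mathbb{G}_a$, $\mathbb{G}_m$, or an elliptic curve), using the rank of the period group as the distinguishing invariant.

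First, I would apply Theorem \ref{T2} to the Taylor series ${^t f}\in M_{\mathbb{C},1}$, obtaining $\psi_0,\psi_1\in M_{\mathbb{C},1}$ with $\psi_1$ admitting an AAT, $\psi_0$ algebraic over $\mathbb{C}(\psi_1)$, $f$ algebraic over $\mathbb{C}(\psi_0,\psi_1)$ and hence over $\mathbb{C}(\psi_1)$, and such that $\psi_0$ and $\psi_1$ extend to meromorphic functions on all of $\mathbb{C}$ whose field $\mathbb{K}:=\mathbb{C}(\psi_0,\psi_1)$ is closed under $h(u)\mapsto h(u+v)$, with $h(u+v)$ a rational function of $\psi_0(u),\psi_1(u),\psi_0(v),\psi_1(v)$ for every $h\in\mathbb{K}$. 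It thus suffices to show that, after some $\alpha\in GL_1(\mathbb{C})$, $\psi_1$ is algebraic over $\mathbb{C}(g\circ\alpha)$ with $g$ as in (I), (II), or (III).

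Next I would examine the common period group $\Pi:=\Lambda_{(\psi_0,\psi_1)}\le(\mathbb{C},+)$. Since $\psi_1$ is transcendental over $\mathbb{C}$ and hence nonconstant as a meromorphic function, $\Pi$ is discrete (any accumulation of periods would force $\psi_1$ constant by the identity principle), so $rank\,\Pi\in\{0,1,2\}$. The field $\mathbb{K}$ is the function field of a smooth projective curve $X/\mathbb{C}$, and the rational addition formulas furnish a birational group law on $X$; by Weil's theorem on birational group laws, $X$ is birational to a connected one-dimensional commutative algebraic group $G/\mathbb{C}$, and $u\mapsto(\psi_0(u),\psi_1(u))$ is a holomorphic group homomorphism $\mathbb{C}\to G(\mathbb{C})$ with kernel $\Pi$. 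The classification of connected one-dimensional commutative algebraic groups over $\mathbb{C}$ gives $G\cong\mathbb{G}_a$, $\mathbb{G}_m$, or an elliptic curve $E_\Pi$, precisely when $rank\,\Pi=0,1,2$ respectively, and the corresponding universal covers are, after a linear change of variable $\alpha\in GL_1(\mathbb{C})$, $u\mapsto u$, $u\mapsto e^u$, or $u\mapsto(\wp_\Pi(u),\wp'_\Pi(u))$. Hence $\psi_1$ is algebraic over $\mathbb{C}(u\circ\alpha)$, $\mathbb{C}(e^u\circ\alpha)$, or $\mathbb{C}(\wp_\Pi\circ\alpha,\wp'_\Pi\circ\alpha)$ in the three cases; in the last case the Weierstrass differential equation $(\wp'_\Pi)^2=4\wp_\Pi^3-g_2\wp_\Pi-g_3$ forces $\wp'_\Pi$ algebraic over $\mathbb{C}(\wp_\Pi)$, so $\psi_1$ is algebraic over $\mathbb{C}(\wp_\Pi\circ\alpha)$, as required.

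The main obstacle is the invocation of Weil's theorem on birational group laws and of the classification of one-dimensional connected commutative algebraic groups over $\mathbb{C}$; a self-contained alternative would treat each rank separately, relying on the classical theory of elliptic functions in the rank-$2$ case, on the exponential parametrization $\mathbb{C}/\mathbb{Z}\omega\cong\mathbb{C}^*$ in the rank-$1$ case, and on a direct argument in the rank-$0$ case that an AAT meromorphic function without periods must be rational in $u$.
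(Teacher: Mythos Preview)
The paper does not prove Fact~\ref{TW}; it is quoted as a classical result of Weierstrass with a reference to \cite[Ch.~VII]{Hancock}. So there is no proof in the paper to compare against directly.

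Your argument is essentially correct and constitutes a modern algebraic-geometric proof of the Weierstrass theorem. The use of Theorem~\ref{T2} to globalize to meromorphic $\psi_0,\psi_1$ on all of $\mathbb{C}$ with a rational addition law is exactly how the paper would want you to start, and the passage from the rational addition law on $\mathbb{K}=\mathbb{C}(\psi_0,\psi_1)$ to a one-dimensional connected commutative algebraic group via Weil's theorem, followed by the trichotomy $\mathbb{G}_a/\mathbb{G}_m/E$, is a clean way to finish. One small point to tighten: you assert that $u\mapsto(\psi_0(u),\psi_1(u))$ is a holomorphic group homomorphism $\mathbb{C}\to G(\mathbb{C})$, but a priori this is only a meromorphic map to the birational model $X$; you should remark that after passing to the algebraic group $G$ the map extends holomorphically (trivial when $G$ is projective, and in the affine cases one checks the relevant functions are entire), so that the kernel really is $\Pi$ and the rank of $\Pi$ really matches the fundamental group of $G(\mathbb{C})$.

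For comparison, the classical proof (as in Hancock) is quite different in flavour: one first shows that an AAT forces $f$ to satisfy a first-order algebraic differential equation $P(f,f')=0$, then analyzes the period structure directly (no periods, one period, a lattice of periods) using the theory of elliptic and doubly periodic functions, without invoking algebraic groups. Your approach trades those classical analytic arguments for the structure theory of algebraic groups; it is shorter once one grants Weil's theorem and the classification, but imports heavier machinery than the original.
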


See e.g. \cite[Ch.II]{Chandrasekharan} for basic properties of $\wp _{\Lambda}$.
Note also that all functions of Fact \ref{TW} admit an AAT.

In Fact \ref{TW}, we obtain that $f\circ \alpha ^{-1}$ is algebraic over $\mathbb{C}(g)$, so under a suitable change of complex coordinates the meromorphic function $f$ is algebraic 
either over $\mathbb{C}(id)$, $\mathbb{C}(exp)$ or $\mathbb{C}(\wp _{\Lambda})$.

\medskip

We begin this section with some technical lemmas.
Firstly, some properties of the Weierstrass $\wp$-function.

\begin{lemma} Let $\Lambda$ be a lattice of $(\mathbb{C},+)$.
Then, $\wp _{\overline{\Lambda}} (u)=\overline{\wp _{\Lambda}(\overline{u})}$.
Hence, $\wp _\Lambda$ is a real meromorphic function if and only if $\Lambda =\overline{\Lambda}$.
\end{lemma}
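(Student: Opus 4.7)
The plan is to prove the displayed identity first, and then derive the ``if and only if'' from it.

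For the identity $\wp_{\overline{\Lambda}}(u) = \overline{\wp_\Lambda(\overline{u})}$, I would start from the standard series representation
\[
\wp_\Lambda(u) = \frac{1}{u^2} + \sum_{\omega \in \Lambda \setminus \{0\}}\left(\frac{1}{(u-\omega)^2} - \frac{1}{\omega^2}\right),
\]
apply complex conjugation termwise to $\wp_\Lambda(\overline{u})$ (justified by the absolute and locally uniform convergence of the series on $\mathbb{C}\setminus\Lambda$), and obtain
\[
\overline{\wp_\Lambda(\overline{u})} = \frac{1}{u^2} + \sum_{\omega \in \Lambda \setminus \{0\}}\left(\frac{1}{(u-\overline{\omega})^2} - \frac{1}{\overline{\omega}^2}\right).
\]
Reindexing the sum via $\omega' := \overline{\omega}$, and using that $\omega \mapsto \overline{\omega}$ is a bijection of $\Lambda \setminus \{0\}$ onto $\overline{\Lambda}\setminus\{0\}$, yields precisely $\wp_{\overline{\Lambda}}(u)$.

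For the forward direction of the equivalence, assume $\wp_\Lambda = g/h$ for real analytic $g,h : \mathbb{C} \to \mathbb{C}$. Since $g,h$ are entire and real-valued on $\mathbb{R}$, Schwarz reflection gives $\overline{g(\overline{u})} = g(u)$ and $\overline{h(\overline{u})} = h(u)$, whence $\overline{\wp_\Lambda(\overline{u})} = \wp_\Lambda(u)$. Combining this with the identity just proved, $\wp_{\overline{\Lambda}} = \wp_\Lambda$; comparing pole sets (each equals its defining lattice) gives $\overline{\Lambda} = \Lambda$.

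For the converse, assume $\Lambda = \overline{\Lambda}$. Then the identity gives $\wp_\Lambda(u) = \overline{\wp_\Lambda(\overline{u})}$, so $\wp_\Lambda$ is real-valued on $\mathbb{R}\setminus \Lambda$; the task is to realize it as a quotient of two real analytic entire functions, and here I expect the only mild obstacle. The natural choice of denominator is $\sigma_\Lambda^2$: by the same termwise-conjugation argument applied to the Weierstrass product (recalled at the start of Section \ref{section2d}), one shows $\sigma_{\overline{\Lambda}}(u) = \overline{\sigma_\Lambda(\overline{u})}$, and hence $\sigma_\Lambda$ is itself real analytic when $\Lambda = \overline{\Lambda}$. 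Since $\sigma_\Lambda$ vanishes simply on $\Lambda$ and $\wp_\Lambda$ has double poles exactly on $\Lambda$, the product $\sigma_\Lambda^2 \wp_\Lambda$ extends to an entire function; and both $\sigma_\Lambda^2 \wp_\Lambda$ and $\sigma_\Lambda^2$ inherit the symmetry $f(u) = \overline{f(\overline{u})}$, so both are real analytic. Thus $\wp_\Lambda = (\sigma_\Lambda^2 \wp_\Lambda)/\sigma_\Lambda^2$ exhibits $\wp_\Lambda$ as a real meromorphic function.
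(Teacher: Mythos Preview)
Your proof of the identity $\wp_{\overline{\Lambda}}(u)=\overline{\wp_\Lambda(\overline{u})}$ is exactly the paper's: termwise conjugation of the defining series and reindexing over $\overline{\Lambda}$.

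For the equivalence, the paper is more succinct: it simply invokes the characterization (recalled at the start of Section~\ref{meromorphic maps}) that a meromorphic function $f$ is real meromorphic if and only if $\overline{f(\overline{u})}=f(u)$, and then reads off both directions immediately from the identity. Your argument unpacks this characterization in the specific case at hand: for the forward direction you rederive $\overline{\wp_\Lambda(\overline{u})}=\wp_\Lambda(u)$ via Schwarz reflection applied to a real-analytic numerator and denominator (which is essentially the easy half of that characterization), and for the converse you explicitly exhibit $\wp_\Lambda=(\sigma_\Lambda^2\,\wp_\Lambda)/\sigma_\Lambda^2$ as a quotient of real analytic entire functions, using the analogous conjugation identity for $\sigma_\Lambda$. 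This is correct and more self-contained than the paper's one-line appeal, at the cost of some extra work; the paper's route is shorter because the needed general fact has already been stated earlier.
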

\begin{proof}
We note that
\[
\overline{\wp _{\Lambda}(u)}
=
\overline{\frac{1}{u^2}+
\sum _{\omega \in \Lambda \setminus \{0\}}
\left\{ \frac{1}{(u-\omega ) ^2}-\frac{1}{\omega ^2} \right\} }
=
\frac{1}{\overline{u}^2}+
\sum _{\omega \in \Lambda \setminus \{0\}}
\left\{ \frac{1}{(\overline{u}-\overline{\omega }) ^2}-\frac{1}{\overline{\omega} ^2}\right\}.
\]
Therefore,
\[
\overline{\wp _{\Lambda}(u)}=
\frac{1}{\overline{u}^2}+
\sum _{\omega \in \overline{\Lambda }\setminus \{0\}}
\left\{ \frac{1}{(\overline{u}-\omega ) ^2}-\frac{1}{\omega ^2}\right\}.
\]
For the second statement recall that since $\wp _\Lambda$ is a meromorphic function it is real if and only if 
$\overline{\wp _{\Lambda}(\overline{u})}=\wp _\Lambda (u)$.
\end{proof}

\begin{lemma}\label{cosets} Let $\Lambda _1$ and $\Lambda _2$ be lattices of $\mathbb{C}$ such that 
$\Lambda _1<\Lambda _2$ and $[\Lambda _2:\Lambda _1]=n$ for some $n\in \mathbb{N}$.
Then, there exist $a_1,\ldots ,a_n,C\in \mathbb{C}$ such that 
$\wp _{\Lambda _2}(u)=\sum _{i=1}^n \wp _{\Lambda _1}(u+a_i)+C$.
\end{lemma}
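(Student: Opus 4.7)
The plan is to construct the claimed identity by comparing the candidate sum with $\wp_{\Lambda_2}$ on the torus $\mathbb{C}/\Lambda_2$. Choose coset representatives $a_1, \ldots, a_n \in \Lambda_2$ of $\Lambda_2/\Lambda_1$ with $a_1 = 0$, and define
\[
F(u) := \sum_{i=1}^n \wp_{\Lambda_1}(u + a_i).
\]
Each summand is $\Lambda_1$-periodic, so $F$ is automatically $\Lambda_1$-periodic. To upgrade this to $\Lambda_2$-periodicity, fix $\lambda \in \Lambda_2$: the translates $\{\lambda + a_i\}_{i=1}^n$ form again a complete set of representatives of $\Lambda_2/\Lambda_1$, so there exist a permutation $\sigma \in S_n$ and elements $\mu_i \in \Lambda_1$ with $\lambda + a_i = a_{\sigma(i)} + \mu_i$; invoking the $\Lambda_1$-periodicity of $\wp_{\Lambda_1}$ yields $F(u+\lambda) = F(u)$.

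Next, I would compare pole data on a fundamental domain for $\Lambda_2$. For $\lambda \in \Lambda_2$, the summand $\wp_{\Lambda_1}(u + a_i)$ has a pole at $u = \lambda$ exactly when $\lambda + a_i \in \Lambda_1$, i.e.\ when $a_i$ represents the coset $-\lambda + \Lambda_1$; exactly one such index $i$ exists. Writing $u = \lambda + t$ and using $\Lambda_1$-periodicity of $\wp_{\Lambda_1}$, the singular part of that summand is $\wp_{\Lambda_1}(t) = t^{-2} + O(t^2)$, which matches the singular part $\wp_{\Lambda_2}(t) = t^{-2} + O(t^2)$ of $\wp_{\Lambda_2}(u)$ around $\lambda$. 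Consequently $F - \wp_{\Lambda_2}$ is a $\Lambda_2$-elliptic function with only removable singularities on $\mathbb{C}$, hence an entire bounded function, hence constant by Liouville. Calling this constant $C$ gives
\[
\wp_{\Lambda_2}(u) = \sum_{i=1}^n \wp_{\Lambda_1}(u + a_i) - C,
\]
which (after absorbing the sign into $C$) is the desired identity.

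The only delicate step is the pole-matching: one must verify both that exactly one summand contributes a pole at each point of $\Lambda_2$ (to avoid double-counting or missing singularities) and that the leading $t^{-2}$ coefficient is precisely $1$, matching that of $\wp_{\Lambda_2}$. This is immediate once the cosets are arranged with $a_1 = 0$ and $\Lambda_1$-periodicity is used, but it is the geometric content that makes the identity work. Everything else is formal — periodicity is a permutation argument, and the vanishing of an entire doubly-periodic function is standard.
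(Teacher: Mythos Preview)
Your proof is correct and follows the standard Liouville-type argument for elliptic functions: verify $\Lambda_2$-periodicity of the sum via a coset-permutation, match principal parts at every point of $\Lambda_2$, and conclude that the difference is an entire doubly-periodic function, hence constant. The paper takes a different, more computational route: it passes to derivatives and uses the absolutely convergent series
\[
\wp'_{\Lambda_2}(u) = -2\sum_{\omega \in \Lambda_2}\frac{1}{(u-\omega)^3},
\]
then partitions $\Lambda_2$ into the $n$ cosets $\Lambda_1 + a_i$ and rearranges the sum to obtain $\wp'_{\Lambda_2}(u) = \sum_i \wp'_{\Lambda_1}(u-a_i)$ directly; integrating gives the claim up to a constant. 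The paper's approach avoids Liouville's theorem entirely and (as remarked there) a direct computation at the level of $\wp$ itself even yields $C=0$. Your argument is structurally cleaner and is the template one uses whenever comparing elliptic functions by their singularities, but it does not by itself determine the constant.
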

\begin{proof}
The lemma can be proved by direct computation, which also shows that $C=0$.
However we will prove the lemma in a different way.
It is enough to show that there exist $a_1,\ldots ,a_n\in \mathbb{C}$ such that 
$\wp '_{\Lambda _2}(u)=\sum _{i=1}^n \wp '_{\Lambda _1}(u+a_i)$. Recall that
\[
\wp '_{\Lambda _2}(u)=-2\sum _{\omega \in \Lambda _2}\frac{1}{(u-\omega )^3}. 
\]
Since $[\Lambda _2:\Lambda _1]=n$ there exist $a_1,a_2,\ldots ,a_n\in \Lambda _2$ 
such that $\Lambda _2=\bigcup _{i=1}^n (\Lambda _1+a_i)$.
So
\[
\wp '_{\Lambda _2}(u)=-2\sum _{i=1}^n\sum _{\omega \in \Lambda _1}\frac{1}{(u-(\omega +a_i))^3} 
\]
and hence $\wp '_{\Lambda _2}(u)=\sum _{i=1}^n \wp '_{\Lambda _1}(u-a_i)$.
\end{proof}

\begin{lemma}\label{sublattices} Let $\Lambda _1$ and $\Lambda _2$ be lattices of $(\mathbb{C},+)$ such that 
$\Lambda _1<\Lambda _2$.
Then $\wp _{\Lambda _1}$ and $\wp _{\Lambda _2}$ are algebraically dependent over $\mathbb{C}$.
\end{lemma}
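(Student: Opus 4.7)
The plan is to reduce the statement to a finite-sum identity and then invoke the transfer-of-AAT results already established. First I would observe that since $\Lambda_1$ and $\Lambda_2$ are both lattices of $(\mathbb{C},+)$ (hence both have rank $2$ as free $\mathbb{Z}$-modules) and $\Lambda_1<\Lambda_2$, the index $n:=[\Lambda_2:\Lambda_1]$ is finite. I can therefore apply Lemma \ref{cosets} to obtain constants $a_1,\ldots,a_n,C\in\mathbb{C}$ such that
\[
\wp_{\Lambda_2}(u)=\sum_{i=1}^n\wp_{\Lambda_1}(u+a_i)+C.
\]

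Next I would use that $\wp_{\Lambda_1}$ admits an AAT (this is classical; it also appears implicitly in Fact \ref{TW}, case (III)). Applying Corollary \ref{algebraic for functions}.(1) to the map $f=\wp_{\Lambda_1}\colon\mathbb{C}\to\mathbb{C}$, each translate $\wp_{\Lambda_1}(u+a_i)$ is algebraic over $\mathbb{C}(\wp_{\Lambda_1}(u))$. Since a finite sum (with a constant shift) of elements algebraic over a field is again algebraic over that field, the displayed identity shows that $\wp_{\Lambda_2}$ is algebraic over $\mathbb{C}(\wp_{\Lambda_1})$. In particular $\wp_{\Lambda_1}$ and $\wp_{\Lambda_2}$ are algebraically dependent over $\mathbb{C}$, as required.

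There is essentially no obstacle here: the decomposition from Lemma \ref{cosets} does the combinatorial work, and the algebraicity of translates is already packaged in Corollary \ref{algebraic for functions}. The only small point worth flagging explicitly is the finiteness of the index $[\Lambda_2:\Lambda_1]$, which justifies having only finitely many summands $a_i$ and thus the passage from ``each summand algebraic'' to ``the whole sum algebraic''.
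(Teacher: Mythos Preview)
Your proof is correct and follows essentially the same approach as the paper: both argue that the index $[\Lambda_2:\Lambda_1]$ is finite, invoke Lemma~\ref{cosets} to write $\wp_{\Lambda_2}$ as a finite sum of translates of $\wp_{\Lambda_1}$ plus a constant, and then use Corollary~\ref{algebraic for functions}.(1) together with the AAT for $\wp_{\Lambda_1}$ to conclude that each translate, and hence the sum, is algebraic over $\mathbb{C}(\wp_{\Lambda_1})$.
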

\begin{proof}
Since both $\Lambda _1$ and $\Lambda _2$ are lattices of $(\mathbb{C},+)$, $rank\, \Lambda _1=rank\, \Lambda _2$, 
hence $[\Lambda _2:\Lambda _1]<\infty$.
Let $n=[\Lambda _2:\Lambda _1]$.
By Lemma \ref{cosets} there exist $a_1,\ldots ,a_n,C\in \mathbb{C}$ such that 
$\wp _{\Lambda _2}(u)=\sum _{i=1}^n \wp _{\Lambda _1}(u+a_i)+C$.
Since $\wp _{\Lambda _1}$ admits an AAT and by Corollary \ref{algebraic for functions}.$(1)$, $\wp _{\Lambda _1}(u+a)$ is 
algebraic over $\mathbb{C}(\wp _{\Lambda _1}(u))$ for all $a\in \mathbb{C}$.
So $\wp _{\Lambda _2}$ is algebraic over $\mathbb{C}(\wp _{\Lambda _1})$.
\end{proof}

\begin{lemma}\label{real wp}  Let $\Lambda$ be a lattice of $(\mathbb{C},+)$.
Let $g:\mathbb{C}\rightarrow \mathbb{C}$ be a real meromorphic function such that $\Lambda _g$ is a discrete subgroup 
of $(\mathbb{C},+)$ and $g$ is algebraic over $\mathbb{C}(\wp _\Lambda )$.
Then there exists a real lattice $\Lambda '<\Lambda$ such that $g$ is algebraic over 
$\mathbb{C}(\wp _{\Lambda '})$.
\end{lemma}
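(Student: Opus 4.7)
My plan is to exploit two algebraic dependencies of $g$: the given one over $\mathbb{C}(\wp_\Lambda)$ and a second one over $\mathbb{C}(\wp_{\overline{\Lambda}})$ obtained via complex conjugation. First I would observe that if $P(\wp_\Lambda(u),g(u))\equiv 0$ for some $P\in\mathbb{C}[X,Y]$, then, applying complex conjugation to the coefficients of $P$ and substituting $u\mapsto\overline{u}$, the identities $\overline{g(\overline{u})}=g(u)$ (since $g$ is real meromorphic) and $\overline{\wp_\Lambda(\overline{u})}=\wp_{\overline{\Lambda}}(u)$ (the lemma proved just above) yield an algebraic dependence of $g$ over $\mathbb{C}(\wp_{\overline{\Lambda}})$.

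The key step is then to force $\Lambda\cap\overline{\Lambda}$ to be a lattice, and for this I would invoke the hypothesis that $\Lambda_g$ is discrete. Lemma \ref{algebraicity of periods}, applied to both dependencies, gives positive integers $a,b$ with $a\Lambda<\Lambda_g$ and $b\overline{\Lambda}<\Lambda_g$; writing $N=\mathrm{lcm}(a,b)$, we obtain $N(\Lambda+\overline{\Lambda})\subset\Lambda_g$. Since $\Lambda_g$ is discrete, so is $\Lambda+\overline{\Lambda}$; as this subgroup of $(\mathbb{C},+)$ contains the rank-$2$ lattice $\Lambda$, it must itself be a lattice of $(\mathbb{C},+)$. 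The index $[\Lambda+\overline{\Lambda}:\overline{\Lambda}]$ is therefore finite, and the second isomorphism theorem gives $[\Lambda:\Lambda\cap\overline{\Lambda}]=[\Lambda+\overline{\Lambda}:\overline{\Lambda}]<\infty$, so $\Lambda\cap\overline{\Lambda}$ is a lattice.

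Setting $\Lambda':=\Lambda\cap\overline{\Lambda}$, one has $\overline{\Lambda'}=\overline{\Lambda}\cap\Lambda=\Lambda'$, so $\Lambda'$ is a real lattice and $\Lambda'<\Lambda$. Lemma \ref{sublattices} then yields that $\wp_\Lambda$ is algebraic over $\mathbb{C}(\wp_{\Lambda'})$, and transitivity of algebraicity gives that $g$ is algebraic over $\mathbb{C}(\wp_{\Lambda'})$, finishing the proof. The only delicate point is the discreteness of $\Lambda+\overline{\Lambda}$: without the hypothesis that $\Lambda_g$ be discrete one could easily have $\Lambda\cap\overline{\Lambda}$ of rank smaller than $2$ (for instance if a basis element of $\Lambda$ has irrational real part), in which case no real sublattice of $\Lambda$ would exist at all; so it is precisely this hypothesis, combined with $g$ being real meromorphic, that makes the statement possible.
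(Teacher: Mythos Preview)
Your proof is correct but takes a genuinely different route from the paper's. The paper argues more directly: it first observes that $\Lambda_g$ is itself a real lattice --- real because $g$ is real meromorphic (Lemma~\ref{discrete groups}(2)), and of rank $2$ because Lemma~\ref{algebraicity of periods} applied to the hypothesis gives $a\Lambda<\Lambda_g$. Since $g$ is nonconstant, $\wp_\Lambda$ is in turn algebraic over $\mathbb{C}(g)$, and Corollary~\ref{algebraicity of periods2} (applied with the roles reversed, using that $\Lambda_g$ is real) then yields a real lattice $\Lambda'$ with $\Lambda'<\Lambda$ and $\Lambda'<\Lambda_g$ --- concretely $\Lambda'=a'\Lambda_g$ for a suitable $a'$. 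Your construction instead manufactures a second algebraic dependence of $g$ over $\mathbb{C}(\wp_{\overline{\Lambda}})$ via conjugation and then takes $\Lambda'=\Lambda\cap\overline{\Lambda}$. Both approaches ultimately rest on the discreteness of $\Lambda_g$, but the paper's avoids the detour through $\overline{\Lambda}$ by exploiting $\Lambda_g$ itself as the source of realness; this is shorter and reuses machinery (Corollary~\ref{algebraicity of periods2}) already set up for exactly this purpose. Your approach, on the other hand, identifies the real sublattice explicitly as $\Lambda\cap\overline{\Lambda}$, making both its realness and its inclusion in $\Lambda$ immediately visible.
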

\begin{proof}
Since $\Lambda$ is a lattice, $\Lambda _g$ is a real lattice by Lemmas \ref{discrete groups}.$(2)$ 
and \ref{algebraicity of periods}.
Hence, by Corollary \ref{algebraicity of periods2} there exists a real lattice $\Lambda '$ of $(\mathbb{C},+)$
such that $\Lambda '<\Lambda$ and $\Lambda '<\Lambda _g$.
On the other hand $g$ is algebraic over $\mathbb{C}(\wp _\Lambda)$ and $\wp _\Lambda$ is algebraic over 
$\mathbb{C}(\wp _{\Lambda '})$, by Lemma \ref{sublattices}, so $g$ is algebraic over $\mathbb{C}(\wp _{\Lambda '})$.
\end{proof}

Some of the possible locally Nash group structures for $(\mathbb{R},+)$ will be given by Weierstrass $\wp$-functions over lattices
of the form $<1,ai>_{\mathbb{Z}}$ where $a\in \mathbb{R}^*$.
We will use the notation $(\mathbb{R},+,\wp _{\Lambda})$ of the introduction.
\begin{remark}\label{Desrank2}\emph{Note that a nontrivial real discrete subgroup $\Lambda$ of $(\mathbb{C},+)$ is of rank $1$ if it is either of the form 
$<a>_{\mathbb{Z}}$ or $<ia>_{\mathbb{Z}}$ for some $a\in \mathbb{R}$; and it is of rank $2$ if it is has a finite index 
subgroup of the form $<a,bi>_{\mathbb{Z}}$ for some $a,b\in \mathbb{R}^*$.
Indeed, since $\Lambda$ is real we must have $\overline{\lambda}\in \Lambda$ for any $\lambda \in \Lambda$.
The only special case is when $\Lambda=<\lambda ,\overline{\lambda}>_{\mathbb{Z}}$ with $\lambda =a+ib$ with both $a,b\neq 0$.
Then $<2a,2ib>_{\mathbb{Z}}$ is the finite index subgroup of $\Lambda$.}
\end{remark}

\begin{fact}[{\cite[Theorem 2]{Madden_Stanton}}]\label{rational wp} 
Let $a,b\in \mathbb{R}^*$ and let $\Lambda _1:=<1,ia>_{\mathbb{Z}}$ and $\Lambda _2:=<1,ib>_{\mathbb{Z}}$.
Then $(\mathbb{R},+,\wp _{\Lambda _1})$ and $(\mathbb{R},+,\wp _{\Lambda _2})$
are isomorphic as locally Nash groups if and only if $ab^{-1}\in \mathbb{Q}$.
\end{fact}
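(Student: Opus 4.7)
The strategy is to apply Corollary~\ref{compatibilityAut.1}, which characterises locally Nash isomorphisms between locally Nash groups of the form $(\mathbb{R}^n,+,f)$ in terms of algebraic dependence, and then to exploit the dictionary between lattices and algebraic dependence provided by Lemma~\ref{sublattices} (one direction) and Corollary~\ref{algebraicity of periods2} (the other).

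For the sufficiency direction, suppose $ab^{-1}=p/q$ with $p,q\in \mathbb{Z}\setminus\{0\}$. Then $qia=pib$, so $\Lambda:=<1,qia>_{\mathbb{Z}}=<1,pib>_{\mathbb{Z}}$ is a real lattice of $(\mathbb{C},+)$ contained in both $\Lambda_1$ and $\Lambda_2$. By Lemma~\ref{sublattices}, both $\wp_{\Lambda_1}$ and $\wp_{\Lambda_2}$ are algebraic over $\mathbb{C}(\wp_\Lambda)$, and consequently $\wp_{\Lambda_2}$ is algebraic over $\mathbb{C}(\wp_{\Lambda_1})$. A standard argument --- splitting an annihilating polynomial $P\in \mathbb{C}[X,Y]$ into its real and imaginary parts (viewing $\overline{P}(\wp_{\Lambda_1}(u),\wp_{\Lambda_2}(u))=0$ as a consequence of $\wp_{\Lambda_1}$ and $\wp_{\Lambda_2}$ being real meromorphic) --- upgrades this to algebraicity over $\mathbb{R}(\wp_{\Lambda_1})$. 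Applying Corollary~\ref{compatibilityAut.1} with $\alpha=\mathrm{id}$ then yields the desired locally Nash isomorphism.

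For the necessity direction, Corollary~\ref{compatibilityAut.1} furnishes $c\in \mathbb{R}^*$ such that the real meromorphic function $g(u):=\wp_{\Lambda_2}(cu)$ is algebraic over $\mathbb{R}(\wp_{\Lambda_1})$, a fortiori over $\mathbb{C}(\wp_{\Lambda_1})$. Using the classical fact that the period group of $\wp_{\Lambda_i}$ is exactly $\Lambda_i$ (any $\omega$ with $\wp_{\Lambda_i}(u+\omega)=\wp_{\Lambda_i}(u)$ must be a pole of $\wp_{\Lambda_i}$, hence in $\Lambda_i$), we have $\Lambda_g=c^{-1}\Lambda_2$, and both $\Lambda_{\wp_{\Lambda_1}}=\Lambda_1$ and $\Lambda_g=c^{-1}\Lambda_2$ are real lattices. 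Corollary~\ref{algebraicity of periods2} then produces a real discrete subgroup $\Lambda$ of $(\mathbb{C},+)$ with $rank\,\Lambda=2$ such that $\Lambda\subset \Lambda_1\cap c^{-1}\Lambda_2$.

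The heart of the proof --- and the step I expect to be the main technical point --- is extracting $ab^{-1}\in \mathbb{Q}$ from this real rank-$2$ sublattice $\Lambda$. Set $R:=\Lambda\cap \mathbb{R}$ and $I:=\Lambda\cap i\mathbb{R}$. Since $\Lambda=\overline{\Lambda}$, for each $\lambda\in\Lambda$ we have $\lambda+\overline{\lambda}\in R$ and $\lambda-\overline{\lambda}\in I$, so $2\Lambda\subset R\oplus I$; as $\Lambda$ has rank $2$, both $R$ and $I$ are non-trivial (otherwise $\Lambda$ would lie entirely in $\mathbb{R}$ or in $i\mathbb{R}$, forcing rank at most $1$). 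The inclusion $\Lambda\subset\Lambda_1=\mathbb{Z}+ia\mathbb{Z}$ forces $R=p\mathbb{Z}$ and $I=iqa\mathbb{Z}$ for some positive integers $p,q$, while the inclusion $\Lambda\subset c^{-1}\Lambda_2=c^{-1}\mathbb{Z}+ic^{-1}b\mathbb{Z}$ then yields $cp\in \mathbb{Z}$, whence $c\in \mathbb{Q}$, and $cqa/b\in \mathbb{Z}$, whence $ab^{-1}=(cqa/b)/(cq)\in \mathbb{Q}$, as required.
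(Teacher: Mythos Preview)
Your proof is correct and follows essentially the same route as the paper's: both directions hinge on Corollary~\ref{compatibilityAut.1}, with sufficiency via a common sublattice and Lemma~\ref{sublattices}, and necessity via Corollary~\ref{algebraicity of periods2} followed by a lattice computation. The only cosmetic difference is that where the paper invokes Remark~\ref{Desrank2} to replace $\Lambda$ by a finite-index sublattice of the form $\langle n_1, n_2 ia\rangle_{\mathbb{Z}}$, you carry out that same reduction explicitly by intersecting $\Lambda$ with $\mathbb{R}$ and $i\mathbb{R}$; the arithmetic that follows is identical.
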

\begin{proof}
Firstly, suppose that there are $m,n\in \mathbb{Z}^*$ such that $mn^{-1}=ab^{-1}$.
Let $\Lambda :=<1,ina>_{\mathbb{Z}}$.
Hence, both $\Lambda <\Lambda _1$ and $\Lambda <\Lambda _2$.
So, by Lemma \ref{sublattices}, $\wp _{\Lambda}$ is algebraic over both $\mathbb{C}(\wp _{\Lambda _1})$ and 
$\mathbb{C}(\wp _{\Lambda _2})$.
Hence $\wp _{\Lambda _1}$ is algebraic over $\mathbb{C}(\wp _{\Lambda _2})$.
Since $\wp _{\Lambda _1}$ and $\wp _{\Lambda _2}$ are real meromorphic functions, 
$\wp _{\Lambda _1}$ is algebraic over $\mathbb{R}(\wp _{\Lambda _2})$. 
So $(\mathbb{R},+,\wp _{\Lambda _1})$ and $(\mathbb{R},+,\wp _{\Lambda _2})$ are locally Nash isomorphism by 
Corollary \ref{compatibilityAut.1}.

Now, suppose that $(\mathbb{R},+,\wp _{\Lambda _1})$ and $(\mathbb{R},+,\wp _{\Lambda _2})$ are isomorphic as locally Nash groups.
By Corollary \ref{compatibilityAut.1}, there exists $\alpha \in GL_1(\mathbb{R})$ such that 
\[
\wp _{\Lambda _2} \circ \alpha \text{ is algebraic over } \mathbb{R}(\wp _{\Lambda _1}).
\]
Let $c$ denote the unique element of $\mathbb{R}^*$ such that
\[
\alpha :\mathbb{R}\rightarrow \mathbb{R}: x\mapsto cx.
\]
Let $\Lambda _2 ':=\alpha ^{-1}(\Lambda _2)$.
Then $\Lambda _2'=<c^{-1},ibc^{-1}>_{\mathbb{Z}}$.
We note that $\Lambda _2'$ is the group of periods of $\wp _{\Lambda _2} \circ \alpha$.
By Corollary \ref{algebraicity of periods2} there exists a real lattice $\Lambda$ of $(\mathbb{C},+)$ such that 
both $\Lambda <\Lambda _1$ and $\Lambda <\Lambda _2'$.
By Remark \ref{Desrank2} we may assume that there exist $n_1,n_2,m_1,m_2\in \mathbb{N}$ such that 
\[
\Lambda =<n_1,n_2ia>_{\mathbb{Z}}=<m_1c^{-1},m_2ibc^{-1}>_{\mathbb{Z}}.
\]
So $m_1c^{-1}=\ell n_1$ for some $\ell \in \mathbb{Z}$ and hence $c\in \mathbb{Q}$.
Also $n_2ia=\ell m_2ibc^{-1}$ for some $\ell \in \mathbb{Z}$ and hence $ab^{-1}\in \mathbb{Q}$.
\end{proof}

Now we prove \cite[Theorem 1]{Madden_Stanton} from a different point of view that involves ranks of lattices. 
We will use the notation $(\mathbb{R},+,f )$ introduced before Theorem \ref{T1} (in particular we recall that the map associated to a chart 
of the identity can be a translate of $f$).

\begin{theorem}[{\cite[Theorem 1]{Madden_Stanton}}]\label{1dim classification}
Every simply connected one-dimensional locally Nash group is isomorphic as a locally Nash group to one of the following locally 
Nash groups.
\begin{enumerate}
 \item[$(1)$] $(\mathbb{R},+,id)$.
 \item[$(2)$] $(\mathbb{R},+,exp)$.
 \item[$(3)$] $(\mathbb{R},+,sin)$.
 \item[$(4)$] $(\mathbb{R},+,\wp _\Lambda )$ where $\Lambda =<1,ia>_{\mathbb{Z}}$ for some $a\in \mathbb{R}^*$.
\end{enumerate}
The three first ones are not locally Nash group isomorphic to each other and neither they are isomorphic to one of the fourth type.
$(\mathbb{R},+,\wp _{<1,ia>_{\mathbb{Z}}})$ and $(\mathbb{R},+,\wp _{<1,ib>_{\mathbb{Z}}})$
are isomorphic as locally Nash groups if and only if $a/b\in \mathbb{Q}$.
\end{theorem}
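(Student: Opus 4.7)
The plan is to combine Theorem \ref{T1} with the classical Weierstrass classification (Fact \ref{TW}) and then read off which of the four model groups we land in from the period lattice $\Lambda_f$, using Corollary \ref{compatibilityAut.1} as the iso criterion.

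\textbf{Reduction.} Any connected, simply connected $1$-dimensional analytic group is isomorphic to $(\mathbb{R},+)$, so by Theorem \ref{T1} I may assume the group has the form $(\mathbb{R},+,f)$ with $f:\mathbb{C}\to\mathbb{C}$ real meromorphic and admitting an AAT. Fact \ref{TW} then produces $\alpha_0\in GL_1(\mathbb{C})$ so that $f$ is algebraic over $\mathbb{C}(g\circ\alpha_0)$ with $g\in\{u,\,e^u,\,\wp_\Lambda\}$; absorbing $\alpha_0$ into the complex parameter $c$ of $e^{cu}$ or into the lattice $\Lambda$, this becomes three cases: $f$ is algebraic over $\mathbb{C}(u)$, over $\mathbb{C}(e^{cu})$ for some $c\in\mathbb{C}^*$, or over $\mathbb{C}(\wp_\Lambda)$ for some lattice $\Lambda$. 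In each case I need only exhibit an $\alpha\in GL_1(\mathbb{R})$ and a model map $m\in\{\mathrm{id},\exp,\sin,\wp_{<1,i\tau>_{\mathbb{Z}}}\}$ such that $m\circ\alpha$ is algebraic over $\mathbb{R}(f)$; since $f$ is real meromorphic, $\mathbb{C}(f)/\mathbb{R}(f)$ is algebraic, so it suffices to verify algebraicity over $\mathbb{C}(f)$.

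\textbf{Case analysis.} In the $\mathrm{id}$-case the transcendence degrees (both equal $1$) force $\mathrm{id}$ algebraic over $\mathbb{C}(f)$, so $(\mathbb{R},+,f)\cong(\mathbb{R},+,\mathrm{id})$. In the $\exp$-case, Lemmas \ref{algebraicity of periods} and \ref{discrete groups} give that $\Lambda_f$ is a real discrete rank-$1$ subgroup commensurable with $\Lambda_{e^{cu}}=\tfrac{2\pi i}{c}\mathbb{Z}$, hence $\Lambda_f=\tfrac{2\pi ir}{c}\mathbb{Z}$ for some $r\in\mathbb{Q}^*$. Writing $\Lambda_f=<\lambda>_{\mathbb{Z}}$, the reality condition $\overline{\lambda}=\pm\lambda$ pins $\lambda$ to lie in $\mathbb{R}$ or in $i\mathbb{R}$, i.e.\ $c$ to be purely imaginary or purely real. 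If $c=ib$ with $b\in\mathbb{R}^*$, then $\alpha(x)=bx$ makes $\sin\circ\alpha$ lie in $\mathbb{C}(e^{ibu})$, which is algebraic over $\mathbb{C}(f)$, giving $(\mathbb{R},+,f)\cong(\mathbb{R},+,\sin)$; if $c\in\mathbb{R}^*$, then $\alpha(x)=cx$ similarly gives $(\mathbb{R},+,f)\cong(\mathbb{R},+,\exp)$. In the $\wp$-case, Lemma \ref{real wp} replaces $\Lambda$ by a real lattice $\Lambda'$ with $f$ still algebraic over $\mathbb{C}(\wp_{\Lambda'})$; Remark \ref{Desrank2} combined with Lemma \ref{sublattices} further reduces to $\Lambda'=<a,bi>_{\mathbb{Z}}$ with $a,b\in\mathbb{R}^*$. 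The Weierstrass homogeneity $\wp_{<a,bi>_{\mathbb{Z}}}(x)=a^{-2}\wp_{<1,ib/a>_{\mathbb{Z}}}(x/a)$ paired with $\alpha(x)=x/a$ then makes $\wp_{<1,i(b/a)>_{\mathbb{Z}}}\circ\alpha$ algebraic over $\mathbb{R}(f)$, yielding $(\mathbb{R},+,f)\cong(\mathbb{R},+,\wp_{<1,i(b/a)>_{\mathbb{Z}}})$.

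\textbf{Non-isomorphisms and intra-type classification.} Proposition \ref{different ranks} makes $\mathrm{rank}\,\Lambda_f$ an isomorphism invariant; the values $0,1,1,2$ for $\mathrm{id},\exp,\sin,\wp_{<1,ia>_{\mathbb{Z}}}$ separate $(\mathbb{R},+,\mathrm{id})$ from all others and separate type (4) from types (1)--(3). To distinguish $(\mathbb{R},+,\exp)$ from $(\mathbb{R},+,\sin)$ I would assume an iso and apply Corollary \ref{compatibilityAut.1}: this would yield $\alpha(x)=cx$ with $\sin(cx)$ algebraic over $\mathbb{R}(\exp)$, forcing via Lemma \ref{algebraicity of periods} commensurability of the real lattice $(2\pi/c)\mathbb{Z}\subset\mathbb{R}$ with the imaginary lattice $2\pi i\mathbb{Z}\subset i\mathbb{R}$, which is impossible for nontrivial rank-$1$ lattices lying on distinct real lines. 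The remaining classification inside type (4) is Fact \ref{rational wp}.

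\textbf{Anticipated obstacle.} The technical heart of the argument lives in the $\exp$-case of the case analysis: upgrading the algebraic data ``$f$ is algebraic over $\mathbb{C}(e^{cu})$'' together with the real-meromorphy of $f$ into the geometric dichotomy $\Lambda_f\subset\mathbb{R}$ or $\Lambda_f\subset i\mathbb{R}$, which cleanly triggers the choice between $\exp$ and $\sin$. Everything else is bookkeeping with the lemmas already assembled in Sections \ref{AAT} and \ref{meromorphic maps}.
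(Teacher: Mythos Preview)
Your proof is correct and follows essentially the same path as the paper's: reduce via Theorem~\ref{T1} and Fact~\ref{TW}, split into the three Weierstrass cases, use the reality of $\Lambda_f$ to get the $\exp$/$\sin$ dichotomy in the rank-$1$ case and Lemma~\ref{real wp} together with Remark~\ref{Desrank2} in the rank-$2$ case, and separate the models by Proposition~\ref{different ranks} and the period-line argument. The only cosmetic difference is that the paper organizes the case analysis by $\operatorname{rank}\Lambda_f$ first and then reads off which $g$ must occur, whereas you split by the Weierstrass type of $g$ and then compute $\operatorname{rank}\Lambda_f$; since Lemma~\ref{algebraicity of periods} makes these equivalent, the two presentations are interchangeable.
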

\begin{proof}We first note that by Lemma \ref{AAT-star} each of the four cases are indeed 
locally Nash groups. Every connected analytic group of dimension $1$ is abelian, so by Theorem \ref{T1} every simply connected one-dimensional locally Nash group is isomorphic as a locally Nash group to 
some $(\mathbb{R},+,f)$, where $f:\mathbb{C}\rightarrow \mathbb{C}$ is a real meromorphic function admitting an AAT. We are in the hypothesis of Fact \ref{TW} and therefore there exists 
$\alpha \in GL_1(\mathbb{C})$ such that $f$ is algebraic over $\mathbb{C}(g\circ \alpha )$, where 
$g:\mathbb{C}\rightarrow \mathbb{C}$ is either $id$ or $exp$ or $\wp _{\Lambda}$ for some lattice $\Lambda$ of $(\mathbb{C},+)$.
Let $c\in \mathbb{C}^*$ be such that $\alpha :\mathbb{C}\rightarrow \mathbb{C}: u\mapsto cu$.
We note that $\Lambda _f$ is a real discrete subgroup of $(\mathbb{C},+)$ by 
Lemma \ref{discrete groups}.$(3)$, also that, by Lemma \ref{discrete groups}.$(4)$, 
\[
rank\, \Lambda _{id\circ \alpha}=0, \quad rank\, \Lambda_{exp\circ \alpha}=1,\quad rank\, \Lambda_{\wp _\Lambda \circ \alpha}=2.
\]

\smallskip

{\it Case I: $rank\, \Lambda _f=0$.}
Then, by Lemma \ref{algebraicity of periods}, $rank\, \Lambda _{g\circ \alpha}=0$.
So $f$ is algebraic over $\mathbb{C}(c \cdot id)=\mathbb{C}(id)$ and therefore by Corollary \ref{compatibilityAut.1}, $(\mathbb{R},+,f)$ and $(\mathbb{R},+,id)$ are isomorphic as 
locally Nash groups, what gives us $(1)$ in the statement of the theorem.

\smallskip

{\it Case II: $rank\, \Lambda _f=1$.}
Then, by Lemma \ref{algebraicity of periods}, $rank\, \Lambda _{g\circ \alpha}=1$.
So, $g=exp$ and hence $f$ is algebraic over $\mathbb{C}(exp \circ \alpha)$.
By Remark \ref{Desrank2} there exists $a\in \mathbb{R}^*$ such that 
either $\Lambda _g=<a>_{\mathbb{Z}}$ or either $\Lambda _g=<ia>_{\mathbb{Z}}$.\\
{\it Subcase II.1: $\Lambda _g=<ia>_{\mathbb{Z}}$.}
In this case, $f(u)$ is algebraic over $\mathbb{C}(e^{2\pi u/a})$.
Since both $f(u)$ and $u\mapsto e^{2\pi u/a}$ are real meromorphic functions, we get by Corollary \ref{compatibilityAut.1} that
$(\mathbb{R},+,f)$ and $(\mathbb{R},+,x\mapsto e^{2\pi x/a})$ are isomorphic as locally Nash groups.
Let $\tilde \alpha (x):=ax/2\pi\in GL_1(\mathbb{R})$.
Then again, by Corollary \ref{compatibilityAut.1} applied to $\tilde \alpha$ we deduce that $(\mathbb{R},+,exp)$ and $(\mathbb{R},+,x\mapsto e^{2\pi x/a})$ 
are isomorphic as locally Nash groups.
So $(\mathbb{R},+,f)$ is locally Nash isomorphic to $(\mathbb{R},+,exp)$, what gives us $(2)$ in the statement of the 
theorem.\\
{\it Subcase II.2: $\Lambda _g=<a>_{\mathbb{Z}}$.}
In this case $f(u)$ is algebraic over $\mathbb{C}(e^{2\pi iu/a})$.
Hence, $f(u)$ is algebraic over $\mathbb{R}(sin (2\pi u/a))$.
Hence applying Corollary \ref{compatibilityAut.1} we deduce that $(\mathbb{R},+,f)$ and $(\mathbb{R},+,x\mapsto sin (2\pi x/a))$ 
are isomorphic as locally Nash groups. Again by Corollary \ref{compatibilityAut.1} applied to $\tilde\alpha$ above we get 
 that $(\mathbb{R},+,sin)$ and $(\mathbb{R},+,x\mapsto sin(2\pi x/a))$ are isomorphic as locally Nash groups.
So $(\mathbb{R},+,f)$ is locally Nash isomorphic to $(\mathbb{R},+,sin)$, what gives us $(3)$ in the statement of the theorem.

\smallskip
 
{\it Case 3: $rank\, \Lambda _f=2$.}
Then, by Lemma \ref{algebraicity of periods}, $rank\, \Lambda _{g\circ \alpha}=2$.
So there exists a lattice $\Lambda$ of $(\mathbb{C},+)$ such that  $g=\wp _{\Lambda}$ and hence $f(u)$ is algebraic over 
$\mathbb{C}(\wp _\Lambda (cu))$.
Since $\wp _\Lambda (cu)=c^{-2}\wp _{c^{-1}\Lambda}(u)$ and by Lemma \ref{real wp}, $f$ is algebraic over $\mathbb{C}(\wp _\Lambda)$ for some real lattice $\Lambda$ of $(\mathbb{C},+)$. Moreover, by Lemma \ref{sublattices} and Remark \ref{Desrank2}, we may assume 
that $\Lambda$ is of the form $<a,ib>_{\mathbb{Z}}$ for some $a,b\in \mathbb{R}^*$.
Hence applying Corollary \ref{compatibilityAut.1} we deduce that $(\mathbb{R},+,f)$ and $(\mathbb{R},+,\wp _\Lambda)$ are 
isomorphic as locally Nash groups.
Let $\Lambda':=<1,ib/a>_{\mathbb{Z}}$ and let $\tilde \alpha (x):=a^{-1}x\in GL_1(\mathbb{R})$.
We note that $\wp _{\Lambda '}(\tilde \alpha (x))=a^2\wp _{\Lambda }(x)$ and therefore by Corollary \ref{compatibilityAut.1} applied to $\tilde \alpha $ we deduce that $(\mathbb{R},+,\wp _{\Lambda})$
and $(\mathbb{R},+, \wp _{\Lambda '})$ are isomorphic as locally Nash groups.
So in this case $(\mathbb{R},+,f)$ is locally Nash isomorphic to $(\mathbb{R},+,\wp _{\Lambda '})$ where 
$\Lambda ' =<1,ia>_{\mathbb{Z}}$ for some $a\in \mathbb{R}^*$, what gives us $(4)$ in the statement of the theorem.

\smallskip

Now we show that the four types of groups considered are not isomorphic as locally Nash groups.
By Proposition \ref{different ranks} the only ones that can be isomorphic as locally Nash groups are of the type $(2)$ and $(3)$ or
both of the type $(4)$.
Suppose $(\mathbb{R},+,exp)$ and $(\mathbb{R},+,sin)$ are isomorphic.
Then by Corollary \ref{compatibilityAut.1} there exists $\alpha \in GL_1(\mathbb{R})$ such that 
$x\mapsto e^{\alpha (x)}$ is algebraic over $\mathbb{C}(x\mapsto sin(x))$.
Since the periods of $x\mapsto e^x$ are imaginary, the periods of $x\mapsto sin (x)$ are real numbers and $\alpha$ cannot 
map imaginary numbers into real numbers, this contradicts Lemma \ref{algebraicity of periods}.

\smallskip

The last statement about  groups of the fourth type follows from Fact \ref{rational wp}.
\end{proof}

\bibliographystyle{plain}
\bibliography{research}

\end{document}